\numberwithin{equation}{section}
\numberwithin{theorem}{section}
\numberwithin{lemma}{section}
\numberwithin{corollary}{section}
\numberwithin{definition}{section}
\numberwithin{remark}{section}
\shorttitle{Topology of dynamic clique complexes} 
\newcommand{\ER}{Erd{\H o}s-R{\' e}nyi }
\renewcommand{\Pr}{\mathbb{P}}
\newcommand{\CliqueBar}{$\bar{f}_{n, k}$}
\newcommand{\EulerBar}{$\bar{\chi}_{n}$}
\newcommand{\BetaBar}{$\bar{\beta}_{n, k}$}
\newcommand{\ver}[1]{$\text{ver}(\bar{#1})$}
\newcommand{\pair}[1]{$\text{pair}(\bar{#1})$}
\newcommand{\cX}{{\mathcal X}}
\newcommand{\cY}{{\mathcal Y}}
\newcommand{\cK}{{\mathcal K}}
\newcommand{\beq}{\begin{eqnarray}}
\newcommand{\eeq}{\end{eqnarray}}
\newcommand{\beqq}{\begin{eqnarray*}}
\newcommand{\eeqq}{\end{eqnarray*}}
\def\real{\mathbb{R}}
\def\:{:\,}
\begin{document}

\title{On the evolution of topology in dynamic clique complexes} 

\authorone[Technion - Israel Institute of Technology]{Gugan C. Thoppe} 
\authortwo[Indian Statistical Institute]{D. Yogeshwaran}
\authorthree[Technion  - Israel Institute of Technology]{Robert J. Adler}

\addressone{Faculty of Electrical Engineering, Technion, Haifa, ISRAEL, 32000.} 
\addresstwo{Statistics and Mathematics Unit, ISI, Bangalore, INDIA, 560059.}
\addressthree{Faculty of Electrical Engineering, Technion, Haifa, ISRAEL, 32000.}

\begin{abstract}
We consider a time varying analogue of the \ER graph and study the topological variations of its associated clique complex. The dynamics of the graph are stationary and are determined by the edges, which  evolve independently as continuous time Markov chains. Our main result is that when the edge  inclusion probability is of the form $p = n^\alpha$, where $n$ is the number of vertices and $\alpha \in (-1/k, -1/(k + 1)),$ then the process of the normalized $k-$th Betti number of these dynamic clique complexes converges  weakly to the Ornstein-Uhlenbeck process as $n \to \infty.$
\end{abstract}

\keywords{Dynamic \ER graph; Betti numbers; Ornstein-Uhlenbeck.} 

\ams{05C80}{60C05; 55U10; 60B10}

\section{Introduction}
\label{sec:Introduction}

The classic  \ER graph $G(n, p)$ is well known  as the random graph on $n$ vertices where each edge appears with probability $p$, independently of the others. It is ubiquitous in applied literatures dealing with network models and, despite its apparent simplicity, has been of theoretical interest ever since Erd\"os and R\'enyi, over half a century ago in \cite{erdos1959random},  established a sharp threshold for its connectivity. They showed that,  for fixed  $\epsilon > 0$ ,
as $n\to\infty$,

\beqq
\Pr\{G(n,p) \text{ is connected}\} \ \rightarrow \
\begin{cases}
1  &  \text{if $p \geq (1 + \epsilon) \log(n)/n$}, \\
0  &  \text{if $p \leq (1 - \epsilon) \log(n)/n$}.
 \end{cases}
\eeqq

Allowing for the interpretation that connectedness is a (almost trivial) topological property, their result can be considered as the first result describing a topological phase
transition in a random graph. Since 1959, a substantial literature has grown around the properties of the \ER graph, providing much finer detail than the original result.
A more recent literature, some of which we shall describe briefly below, has considered more detailed topological information about  objects generated by $G(n,p)$.

In this paper, we take all of this a step further, applying these richer probabilistic results in the topological setting,  to  temporally evolving \ER graphs.
We need a few definitions, or at least descriptions, in order to define what we mean by this.

\subsection{Some background}
\label{subsec:tb}

\subsubsection{Dynamic \ER graphs:}
\label{subsubsec:ER}
The dynamic \ER graph depends on three parameters: the number of nodes, $n\in \mathbb{N}$, the connectivity probability $p\in [0,1]$, and a rate, $\lambda > 0$.
Denoted by $\{G(n, p, t): t \geq 0\}$, it is a
time-varying subgraph of the complete graph  on $n$ vertices with the following properties.
\newline
(i) The initial value  $G(n, p, 0)$ is distributed as the (static) \ER graph $G(n, p)$.
\newline
(ii) For $t \geq 0$, each edge independently evolves as a continuous time on/off Markov chain. The waiting time in the states `off' and `on' are exponential with parameters $\lambda p$ and $\lambda (1-p)$ respectively.

If $e(t)$ denotes  the state of one of these edges at time $t$, then it follows immediately from the above description that, for any $t_1, t_2$,
\beq
\label{eqn:EdgeTransition11}
\Pr\{e(t_2) =\text{on}\, \big|\, e(t_1)=\text{on}\} &=&  p + (1 - p)e^{-\lambda |t_2 - t_1|},
\eeq
and
\beq
\label{eqn:EdgeTransition00}
\Pr\{e(t_2) = \text{off}\,\big| \, e(t_1) = \text{off}\} &=&  (1 - p) + pe^{-\lambda |t_2 - t_1|}.
\eeq
From this it follows that, for any $t \geq 0$,
\begin{equation}
\label{eqn:EdgeProbability}
\Pr\{e(t) = \text{on}\} = p.
\end{equation}
Consequently, $\{G(n, p, t): t \geq 0\}$ is a stationary reversible Markov process and, for each $t \geq 0$, it is a realisation of the (static) \ER graph, $G(n,p)$.

The dynamic \ER graph described here is an example of a continuous time `Edge Markovian Evolving Graph' (EMEG), a  class of dynamic models that has often been used to model real world dynamic networks. In particular, if one thinks of the static \ER graph as a simple, but generic model for `faulty connections' between nodes, then the dynamic version is clearly relevant to   `Intermittently Connected Mobile Networks' (ICMNs) \cite{musolesi2005adaptive,spyropoulos2005spray}. The ICMNs have given rise to  many  interesting new questions, such as temporal connectivity \cite{basu2010modeling,clementi2010flooding} and dynamic community detection \cite{clementi2014distributed}, all related, in one way or another, to issues of connectivity. For us, however, the importance of the dynamic \ER graph lies in its relative analytic accessibility for also tackling more sophisticated topological issues.
Furthermore, in the same way that results proven for the static case have turned out to be of a `universal' nature regarding connectivity, in that they hold for far more complicated graphs and networks, we believe that the topological results of the paper have similar extensions.

\subsubsection{Clique complexes:}
\label{subsubsec:cc}
The study of the topology of \ER graphs typically revolves around the clique complexes that they generate, which we now define.

We first introduce the notion of an  {\it abstract simplicial complex} which is a purely combinatorial notion. A family $\cK$ of non-empty finite subsets of $V$ is an abstract simplicial complex if it is closed under the operation of taking non-empty subsets, i.e.,\  $\cY\subset \cX \in \cK\implies\cY\in \cK$. Elements of $\cK$ are called faces or simplices, and the dimension of a face $\cX$ is its cardinality $|\cX|$ minus 1.  Elements of dimension 0 are called vertices. The dimension of $\cK,$ denoted $\dim(\cK),$ is the supremum over dimensions of all its faces.

Abstract simplicial complexes also have concrete, geometric realisations in Euclidean space. In particular, if $\cK$ is finite, which is the only situation of interest to us, then this is simple. Firstly, embed the vertices of $\cK$ as an  affinely independent subset in $\real^N$, for sufficiently large $N$. For example, take $N$ to be the number of vertices, number the vertices $v_1,\dots,v_N$, write $e_j\in\real^N$ for the vector with a 1 in the $j$-th position and all other entries 0, and map $v_j\to e_j$.  Then any face $\cX\in \cK$ can be identified with the geometric simplex in $\real^N$ spanned by the corresponding embedded vertices. The geometric realisation is then the union of all such simplices.

Consider a (undirected) graph $G.$ Then a {\it clique} in $G$ is just a subset of vertices in $G$ such that each pair of vertices is joined by an edge. The {\it clique complex},  $\mathscr{X}(G)$, is the collection of all subsets of vertices that form a clique in $G$. Since a subset of a clique is itself a clique, $\mathscr{X}(G)$ is indeed an abstract simplicial complex. In the corresponding geometric realisation, each clique of $k$ vertices is represented by a simplex of dimension $k-1$. The 1-skeleton of  $\mathscr{X}(G)$  (which is the underlying graph of the complex) is a graph with a vertex for every 1-element set in $\mathscr{X}(G)$ and an edge for every 2-element set in $\mathscr{X}(G)$, and so  is isomorphic to $G$ itself.

Henceforth we will study the temporal evolution of the topology of the clique complexes generated from the dynamic \ER graph; viz.\ the sets
  \beq
  \mathscr{X}(n, p;t )\ := \ \mathscr{X}( G(n, p;t)).
  \label{equn:Xnpt}
  \eeq
In order to do this, we shall study the Betti numbers of these sets.

\subsubsection{Betti numbers:}
\label{subsubsec:Betti}
Throughout this paper we work with reduced Betti numbers and for notational convenience we shall drop the word reduced henceforth. There is really no good way to define Betti numbers in a few, self-contained, paragraphs. Formally, for an integer $k\geq 0$,  the $k$-th Betti number $\beta_k\equiv \beta_k(X)$ of a topological space $X$ is  the rank  of the abelian group $H_k(X, \mathbb{A})$, the reduced $k$-th homology group of $X$ with coefficients from the abelian group $\mathbb{A}.$ The reduced homology groups themselves  are the quotient groups $H_{k} = \ker \delta_{k} / \mathrm{Im} \; \delta_{k+1}$, where the  $\delta_{k}$'s  are the boundary maps for $X.$ In this paper, we assume $\mathbb{A} = \mathbb{Q},$ the field of rationals, consistent with  \cite{kahle2009topology,kahle2014sharp,kahle2013limit}.

The problem is that, as succinct as this description may be, it is of little help to a reader who has not already worked through one of the standard texts on Algebraic Geometry such as   \cite{hatcher606algebraic}, or perhaps the less standard \cite{edelsbrunner2010computational},  which is motivated by computational issues and  somewhat closer to the specific focus of the current paper.

Thus, we shall not attempt to define Betti numbers rigorously, but shall start with three examples and then allow some imprecision. For the following discussion, it is useful to assume that the topological space $X$ is a subset of some finite dimensional Euclidean space $\mathbb{R}^N.$ As for the examples,   $\beta_0(X)$ equals one less than the number of connected components in $X$.
$\beta_1(X)$ counts the number of 1-dimensional, or `topologically circular' holes - think of holes in a 2 or 3 dimensional object that you could poke a finger through. If $X$ is 3-dimensional, then $\beta_2(X)$ counts the number of `voids' within $X$ - think of the interior of a tennis ball, or of a  bagel that had an air pocket running around the entire ring. Higher order Betti numbers are rather harder to describe this way, since everyday language lacks the vocabulary needed to describe high dimensional objects. Roughly speaking, however, $\beta_k(X)$ counts the number of distinct regions in $X$ which  are `topologically equivalent to' the boundary of a solid, $k$-dimensional set, something which we refer to as a `$(k-1)$-cycle' below.  As such, increasing $k$ increases the qualitative level of topological complexity one is studying, while increasing $\beta_k$ for a fixed $k$  is an indication of quantitatively more complexity at the given level. This is true only  up to a point, since for all $k \geq N$, $\beta_k(X)\equiv 0$. Fortunately, at least in order to understand the thrust of the main results of this paper, these necessarily imprecise descriptions of Betti numbers should suffice.

The results of this paper concentrate on the $n\to\infty$ asymptotic behaviour of  stochastic processes describing the normalised Betti numbers of the clique complexes associated with the dynamic \ER graphs; viz.
\beq
\label{equn:barbeta}
\bar \beta_{n,k}(t) \ := \ \dfrac{\beta_{n, k}(t) - \mathbb{E}[\beta_{n, k}(t)]}{\sqrt{{{\rm{Var}}}[\beta_{n, k}(t)]}},
\eeq
where
\beq
\label{equn:betank}
\beta_{n,k}(t) \ := \ \beta_k \left( \mathscr{X}(n, p;t)\right)\ = \ \beta_k\left(\mathscr{X}\left(G(n,p;t)\right)       \right).
\eeq

\subsection{Results}

\subsubsection{\ER graphs and associated topology:}
\label{subsubsection:ERT}

The topological study of static random graphs and their associated simplicial complexes, beyond  classical issues of connectivity and degree, has seen considerable recent activity, including \cite{babson2011fundamental, kahle2007neighborhood, kahle2011random, kahle2013limit, kahle2014sharp, linial2006homological, meshulam2009homological}. A recent, well motivated review is  \cite{kahle2014topology}. Most of this literature follows the theme that  Betti numbers of increasing index are good quantifiers of topological complexity, and so are the appropriate measure to study.

In terms of the  (static) \ER graph, heuristics imply that for small $p$ the associated clique complex will, with high probability, be topologically simple, but that the complexity will grow with increasing $p$. Thinking a little more deeply,  as $p$ grows the clique complex changes from a collection of disconnected vertices (so that $\beta_0$ is large) to a highly connected object (so that, at full connectivity, $\beta_0$ drops to its minimum value of 0). At about the same stage,   simple, 1-dimensional cycles start forming (so that $\beta_1$ grows) until these cycles fill in and then
to produce empty tetrahedra-type objects (so that $\beta_1$ drops while $\beta_2$ grows).   The following result, which combines the result from \cite[Theorem 1.1]{kahle2014sharp} and the discussion below (1) in  \cite{kahle2014erratum}, confirms  this description.

\begin{theorem}[\cite{kahle2014sharp,kahle2014erratum}]
\label{thm:VanishingOfAllButOneHomology}
Fix $k \geq 1, \ M>0$ and $t \geq 0.$ Let $p = n^\alpha$, $\alpha \in \left(-\tfrac{1}{k}, -\tfrac{1}{k + 1}\right)$. Then,
\[
\underset{n \rightarrow  \infty}{\lim}\Pr\{\beta_{n, k}(t) \neq 0,\, \beta_{n,j}(t) \ = \ 0, \, \forall j \neq k \} = 1 - o(n^{-M}).
\]
\end{theorem}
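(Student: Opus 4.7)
Because (\ref{eqn:EdgeProbability}) and the independent evolution of distinct edges imply that for each fixed $t \geq 0$ the graph $G(n,p;t)$ has the same distribution as the static \ER graph $G(n,p)$, the theorem reduces at once to the corresponding statement for the static clique complex $\mathscr{X}(G(n,p))$.  I would then split the analysis by the index of the Betti number.

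For $j > k$, the hypothesis $\alpha < -1/(k+1)$ gives $\alpha < -1/j$, so the ``small $p$'' half of the sharp vanishing theorem of \cite{kahle2014sharp,kahle2014erratum} applies and yields $\beta_{n,j}(t) = 0$ with probability $1 - o(n^{-M})$.  The intuition is that the $j$-simplices are combinatorially too sparse to assemble into a non-trivial $j$-cycle, and the argument is a first-moment bound on a suitable enumeration of potential $j$-cycles, sharpened to a polynomial tail via Kim--Vu concentration.  Symmetrically, for $0 \leq j < k$ the hypothesis $\alpha > -1/k$ gives $\alpha > -1/(j+1)$, so the ``large $p$'' half of the same theorem yields $\beta_{n,j}(t) = 0$ with probability $1 - o(n^{-M})$; here the intuition is that $(j+1)$-simplices are so abundant that every $j$-cycle bounds, and the argument combines a Garland-type spectral estimate on links of lower-dimensional simplices with concentration.

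It remains to show $\beta_{n,k}(t) \neq 0$ with probability $1 - o(n^{-M})$.  For this I would use the reduced Euler characteristic identity
\[
\tilde\chi(\mathscr{X}(G(n,p;t))) \ = \ \sum_{j \geq 0}(-1)^j \beta_{n,j}(t),
\]
which, on the event of probability $1 - o(n^{-M})$ that $\beta_{n,j}(t) = 0$ for every $j \neq k$ established above, collapses to $\beta_{n,k}(t) = (-1)^k \tilde\chi$.  Now $\tilde\chi = -1 + \sum_{j \geq 0}(-1)^j f_j$, where $f_j$ is the number of $(j+1)$-cliques in $G(n,p;t)$, and a direct computation shows that in the regime $\alpha \in (-1/k, -1/(k+1))$ the expectations $\mathbb{E}[f_j]$ are unimodal in $j$ with a unique maximum at $j = k$ of order $n^{(k+1)(1 + \alpha k/2)}$, a positive power of $n$ that dominates all other terms.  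Kim--Vu polynomial concentration applied to $\tilde\chi$, which is a bounded-degree polynomial in the independent edge indicators, then gives $(-1)^k \tilde\chi > 0$, and hence $\beta_{n,k}(t) \neq 0$, with probability $1 - o(n^{-M})$.

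The main obstacle is obtaining the polynomial tail $o(n^{-M})$ uniformly in $M$ rather than mere convergence in probability; this is the essential content of the \cite{kahle2014erratum} refinement and requires replacing the standard second-moment method by sharper polynomial concentration inequalities (Kim--Vu, Janson-type) for subgraph counts, together with careful control of the ranges where different face-count terms dominate.
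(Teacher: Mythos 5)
The paper does not reprove Theorem~\ref{thm:VanishingOfAllButOneHomology}; its entire ``proof'' is the one-sentence observation, stated immediately after the theorem, that by \eqref{eqn:EdgeProbability} and stationarity $G(n,p;t)$ is distributed for each fixed $t$ as a static \ER graph, so the statement is just a restatement of \cite[Theorem 1.1]{kahle2014sharp} together with the polynomial-tail refinement discussed in \cite{kahle2014erratum}. Your opening sentence performs exactly this reduction, which is the whole content of the paper's argument. Everything after that in your proposal is a high-level recounting of how Kahle and Meckes establish the \emph{static} result itself---first-moment sparsity for $j>k$, Garland-type spectral vanishing for $j<k$, an Euler-characteristic identity plus concentration for $j=k$, and sharper tail inequalities to obtain $o(n^{-M})$. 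That material is directionally consistent with the cited proofs (your inequalities $\alpha<-1/j$ for $j>k$ and $\alpha>-1/(j+1)$ for $j<k$ are correct, and the dominant face-count exponent $(k+1)(1+\alpha k/2)$ is right), but it is not part of this paper's argument, which treats the static theorem as a black box. As a self-contained proof it would need substantially more detail (the Garland step and the uniform-in-$M$ polynomial tail are both nontrivial), but for the purposes of this paper the citation-plus-distributional-identity is all that is required, and you have that.
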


Since $G(n, p, t)$ is distributed as a \ER graph, the above result is a simple rephrasing of the original result given in \cite{kahle2014sharp,kahle2014erratum}. This result shows that there is a sequence of clearly marked phase transitions, and between each of these there is a dominant Betti number, and so a dominant type of homology in the clique complex.
Of more interest to us, however, is the following central limit theorem that is a consequence of  \cite[Theorem 2.4]{kahle2013limit}  and \cite[Theorem 1.1]{kahle2014erratum}.

\begin{theorem}[\cite{kahle2013limit,kahle2014erratum}]
\label{thm:CentralLimitTheoremBettiNumbers}
Fix $k \geq 1$, $t \geq 0,$ and let $p$ be as in Theorem~\ref{thm:VanishingOfAllButOneHomology}. Then, as $n \rightarrow \infty$,\[
\dfrac{\beta_{n, k}(t) - \mathbb{E}[\beta_{n, k}(t)]}{\sqrt{{{\rm{Var}}}[\beta_{n, k}(t)]}} \ \Rightarrow\  \mathcal{N}(0,1),
\]
 where $\mathcal{N}(0, 1)$ denotes a standard Gaussian  and $\Rightarrow$ denotes convergence in distribution.
\end{theorem}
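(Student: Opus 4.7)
My plan is to exploit the stationarity of the dynamic \ER graph at any fixed time, thereby reducing Theorem~\ref{thm:CentralLimitTheoremBettiNumbers} to its static counterpart proved in \cite{kahle2013limit,kahle2014erratum}.

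\emph{Step 1 (Reduction to the static graph).} By \eqref{eqn:EdgeProbability} and the stationarity observation immediately following it, for every fixed $t \geq 0$ the random graph $G(n,p;t)$ has the same distribution as the static \ER graph $G(n,p)$. Consequently $\beta_{n,k}(t) = \beta_k(\mathscr{X}(G(n,p;t)))$ is equal in distribution to $\beta_k(\mathscr{X}(G(n,p)))$, and in particular its mean and variance coincide with those of the static Betti number.

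\emph{Step 2 (Invoke the static CLT).} The central limit theorem for the static \ER clique complex in the precise parameter range $\alpha \in (-1/k, -1/(k+1))$ is Theorem~2.4 of \cite{kahle2013limit}, with the correct parameter range supplied by Theorem~1.1 of \cite{kahle2014erratum}. Combining this with Step~1 yields the claim verbatim, with no further work needed because the normalisation in \eqref{equn:barbeta} depends only on the one-dimensional marginal distribution of $\beta_{n,k}(t)$.

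\emph{Step 3 (Sketch of the underlying mechanism).} For completeness, the argument in \cite{kahle2013limit,kahle2014erratum} proceeds as follows. On the high-probability event $A_n := \{\beta_{n,j}(t) = 0 \text{ for all } j \neq k\}$ of Theorem~\ref{thm:VanishingOfAllButOneHomology}, the Euler--Poincar\'{e} identity gives $\beta_{n,k}(t) = (-1)^k \chi_n(t)$, where $\chi_n(t) = \sum_{j \geq 0} (-1)^j f_{n,j}(t)$ is the Euler characteristic and $f_{n,j}(t)$ denotes the number of $j$-dimensional faces of $\mathscr{X}(n,p;t)$. Each $f_{n,j}(t)$ is a sum of indicators of $(j+1)$-cliques and hence a U-statistic on $G(n,p)$. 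A direct variance computation shows that in this regime ${\rm Var}[\chi_n(t)]$ is asymptotically dominated by ${\rm Var}[f_{n,k}(t)]$, and a standard martingale or dependency-graph CLT for sums of weakly dependent indicators of $(k+1)$-cliques on $G(n,p)$ yields asymptotic normality of the normalised Euler characteristic.

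The main obstacle, and the principal technical content of \cite{kahle2013limit,kahle2014erratum}, lies in upgrading the topological concentration statement of Theorem~\ref{thm:VanishingOfAllButOneHomology} (which controls $\mathbf{1}_{A_n^c}$ only in probability) into enough $L^2$-control to justify replacing $\beta_{n,k}(t)$ by $(-1)^k \chi_n(t)$ inside the CLT while preserving the correct variance normalisation; the polynomial decay rate $o(n^{-M})$ for arbitrary $M$ in Theorem~\ref{thm:VanishingOfAllButOneHomology} is crucial here, since it dominates any polynomial blow-up of the defect $\beta_{n,k}(t) - (-1)^k\chi_n(t)$ on $A_n^c$.
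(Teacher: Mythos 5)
Your proposal is correct and takes the same route as the paper: this theorem is presented there as a cited result, and the entire argument is precisely your Steps 1--2, namely that stationarity (via \eqref{eqn:EdgeProbability}) makes each fixed-time marginal $G(n,p;t)$ equal in law to the static $G(n,p)$, after which Theorem~2.4 of \cite{kahle2013limit} together with Theorem~1.1 of \cite{kahle2014erratum} applies verbatim. Your Step 3 is an accurate but optional gloss on what happens inside those references; the paper does not reproduce it, though it does later re-derive the analogous Euler-characteristic-to-Betti-number comparison (Lemmas~\ref{lem:AsymptoticDifferenceBettiNumberEulerCharacterisitc}--\ref{lem:AsymptoticVarianceBettiNumberEulerCharacteristicSame}) when proving the process-level version.
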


\subsubsection{Dynamic \ER graphs and associated topology:}
\label{subsubsection:DERT}
The main result of this paper is the following extension of Theorem~\ref{thm:CentralLimitTheoremBettiNumbers}.

\begin{theorem}
\label{thm:ConvergenceBettiNumberOrnsteinUhlenbeckProcess}
Fix $k \geq 1$,  $\lambda>0$. Let $p = n^\alpha$, $\alpha \in \left(-\tfrac{1}{k}, -\tfrac{1}{k + 1}\right)$. Then, as $n \rightarrow \infty$,
\[
\{\bar{\beta}_{n, k}(t) : t\geq 0\}
\ \Rightarrow \ \{\mathcal{U}_{\lambda} : t \geq 0\}
\]
\end{theorem}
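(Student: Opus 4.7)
The proof naturally splits into three steps: reduce $\bar\beta_{n,k}$ to a tractable linear statistic, establish finite-dimensional convergence to a stationary Gaussian process with exponential covariance, and verify tightness.

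\textbf{Step 1 (reduction).} Following Kahle's static analysis~\cite{kahle2013limit,kahle2014erratum}, on the event from Theorem~\ref{thm:VanishingOfAllButOneHomology} that every reduced Betti number other than $\beta_{n,k}(t)$ vanishes, the Euler--Poincar\'e identity gives
\[
\beta_{n,k}(t) = (-1)^k\!\left[-1+\sum_{j\geq 0}(-1)^j f_{n,j}(t)\right],
\]
where $f_{n,j}(t)$ counts $j$-faces of $\mathscr{X}(n,p;t)$. Centering this equality and applying a union bound over any finite collection of times $t_1,\dots,t_m$, affordable thanks to the $o(n^{-M})$ probability estimate in Theorem~\ref{thm:VanishingOfAllButOneHomology}, yields
\[
\bar\beta_{n,k}(t_i) = (-1)^k\bar\chi_n(t_i) + o_{L^2}(1), \qquad i=1,\dots,m,
\]
where $\bar\chi_n$ is the centred Euler characteristic normalised by $\sqrt{\mathrm{Var}(\beta_{n,k}(t))}$. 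It thus suffices to prove the Ornstein--Uhlenbeck limit for the linear statistic $\bar\chi_n(\cdot)$.

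\textbf{Step 2 (finite-dimensional convergence).} Each $f_{n,j}(t)$ decomposes as $\sum_\sigma F_\sigma(t)$ with $F_\sigma(t) = \prod_{e\subset\sigma}e(t)$ and $\sigma$ ranging over $(j+1)$-vertex subsets. The independence of the edge chains together with (\ref{eqn:EdgeTransition11})--(\ref{eqn:EdgeTransition00}) gives
\[
\mathrm{Cov}(F_\sigma(s),F_\sigma(t)) = p^{\binom{|\sigma|}{2}}\!\left[\bigl(p+(1-p)e^{-\lambda|s-t|}\bigr)^{\binom{|\sigma|}{2}} - p^{\binom{|\sigma|}{2}}\right],
\]
with analogous product formulas when $\sigma\neq\tau$ share some edges. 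Summing these across all pairs (the dominant contribution in this regime coming from diagonal pairs of $k$-faces) and normalising yields
\[
\mathrm{Cov}(\bar\chi_n(s),\bar\chi_n(t)) \;\longrightarrow\; e^{-c(k)\lambda|s-t|}
\]
for an explicit constant $c(k)$. Joint Gaussianity at $(t_1,\dots,t_m)$ then follows from the Cram\'er--Wold device: any linear combination $\sum_i a_i \bar\chi_n(t_i)$ is itself a normalised sum of weakly dependent indicators $F_\sigma(t_i)$, to which the Stein/dependency-graph CLT used in the static setting of \cite{kahle2013limit} applies.

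\textbf{Step 3 (tightness, identification, and anticipated obstacle).} The process $\bar\beta_{n,k}$ has paths in $D[0,\infty)$, jumping only at edge-transition times. Aldous' tightness criterion follows from the second-moment bound
\[
\mathbb{E}|\bar\chi_n(t)-\bar\chi_n(s)|^2 \;\leq\; C\bigl(1 - e^{-c(k)\lambda|t-s|}\bigr) \;\leq\; C'|t-s|,
\]
immediate from Step 2 and stationarity. A centred stationary Gaussian process with exponential covariance is precisely the stationary Ornstein--Uhlenbeck process, identifying the limit as $\mathcal{U}_\lambda$. The main obstacle I anticipate is in Step 1: while Kahle's reduction is pointwise in time, transferring it to a functional statement requires controlling the probability that lower-dimensional homology appears transiently \emph{between} reference times, something that should be tractable via a first-moment estimate on the number of edge transitions in a compact window combined with the second-moment bounds on small subcomplexes from~\cite{kahle2014sharp}, but constitutes the main technical input not already supplied by the static theory.
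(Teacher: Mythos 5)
Your overall architecture — reduce $\bar\beta_{n,k}$ to $\bar\chi_n$ via the Euler--Poincar\'e identity and Theorem~\ref{thm:VanishingOfAllButOneHomology}, then prove finite-dimensional convergence by Cram\'er--Wold plus a dependency-graph CLT, then verify tightness — is the paper's architecture. But two of your three steps have genuine gaps, and you misidentify where the real technical difficulty lies.

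First, in Step~2 you leave the limiting covariance as $e^{-c(k)\lambda|s-t|}$ with ``an explicit constant $c(k)$'' and then in Step~3 declare the limit to be $\mathcal{U}_\lambda$, whose covariance is $e^{-\lambda|s-t|}$. This only works if $c(k)=1$, and that fact is not automatic: it comes from a cancellation (Lemma~\ref{lem:AsymptoticCovarianceCliqueCount} in the paper) in which the $i=2$ contribution to ${\rm Cov}[f_{n,k}(t_1),f_{n,k}(t_2)]$ exactly matches the $i=2$ contribution to ${\rm Var}[f_{n,k}(t)]$ after pulling out a single factor of $L=e^{-\lambda|t_1-t_2|}$, while the $i\geq3$ contributions are shown to be subdominant by a separate convexity argument. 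Your remark that ``the dominant contribution in this regime comes from diagonal pairs of $k$-faces'' is also off: in the range $\alpha\in(-1/k,-1/(k+1))$ the variance-dominating overlap is $i=2$ for part of the range and $i=k+1$ for the other part (with a crossover at $\alpha=-1/(k+0.5)$), and the clean exponential comes from isolating the $i=2$ term, not the diagonal.

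Second, and more seriously, your tightness argument in Step~3 does not work. The bound $\mathbb{E}\lvert\bar\chi_n(t)-\bar\chi_n(s)\rvert^2\leq C'\lvert t-s\rvert$ gives increments of order $h^{1}$; that exponent is not strictly greater than $1$ and is therefore insufficient for the Kolmogorov--Chentsov/Ethier--Kurtz criterion. To use Aldous' criterion you would need to control increments over stopping times, and since $\bar\chi_n$ and $\bar\beta_{n,k}$ are not Markov (Appendix~A of the paper) a deterministic-time second-moment bound does not transfer. The paper instead establishes the fourth-order condition
\[
\mathbb{E}\left[\lvert\bar\chi_n(t+h)-\bar\chi_n(t)\rvert^2\,\lvert\bar\chi_n(t)-\bar\chi_n(t-h)\rvert^2\right]\leq K h^2,
\]
which is where most of the technical work lives (the expansion of $g(h;\bar A)$ into $16$ terms, the Taylor expansion $\Phi(h;\bar A)=O(h^2)$ after checking $\Phi$ and $\partial_h\Phi$ vanish at $h=0$, and the counting of non-independent $4$-tuples of faces). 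The obstacle you anticipate — transient homology between reference times — is real, but it is not the main input: it enters only when transferring the fourth-moment bound from $\bar\chi_n$ to $\bar\beta_{n,k}$, and the paper handles it with a Poisson-clock coupling that bounds the Betti increment by the number of edge transitions in the window, times a polynomial in $n$, times the superpolynomially small probability from Theorem~\ref{thm:VanishingOfAllButOneHomology}. For the finite-dimensional distributions alone, the pointwise reduction at the $m$ fixed times (your Step~1) suffices and there is no in-between-times issue.
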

where $\{\mathcal{U}_{\lambda}(t): t \geq 0\}$ is the stationary, zero mean,  Ornstein-Uhlenbeck process  with covariance ${\rm{Cov}}[\mathcal{U}_{\lambda}(t_1), \mathcal{U}_{\lambda}(t_2)] = e^{-\lambda |t_1 - t_2|}$,  and here $\Rightarrow$  denotes convergence in distribution on the Skorokhod space of functions on $[0,\infty)$.

Although, in view of Theorem \ref{thm:CentralLimitTheoremBettiNumbers}, it is not surprising that the limits of the random processes $\bar{\beta}_{n,k}$ are Gaussian, it is somewhat surprising that, as Ornstein-Uhlenbeck processes,  they are Markovian.  While the underlying dynamic \ER process is Markovian, this is not the case for the processes $\bar{\beta}_{n,k}$, as shown in Appendix
\ref{app:markov}.

\subsubsection{On proving Theorem \ref{thm:ConvergenceBettiNumberOrnsteinUhlenbeckProcess}:}
\label{subsec:IdeaOfProof}

Since working directly with Betti numbers is difficult, we adopt the approach of \cite{kahle2009topology,kahle2013limit}. Let  $f_{n, k}(t)$ denote the number of $(k + 1)$-cliques
in $G(n, p, t)$ and let
\begin{equation}
\label{eqn:defnEulerCharacteristicCliqueCount}
\chi_{n}(t) \ := \ \sum_{j = 0}^{n - 1} (-1)^{j} f_{n, j} (t) \ =\  \sum_{j = 0}^{n - 1} (-1)^{j} \beta_{n, j} (t)
\end{equation}
be the Euler-Poincar{\' e} characteristic of $\mathscr{X}(n, p, t);$ see \cite[p101]{edelsbrunner2010computational} for details.  Define
\begin{equation}
\label{eqn:ScaledShiftedCliqueCountProcess}
\bar{f}_{n, k}(t) \ := \  \frac{f_{n, k}(t) - \mathbb{E}[f_{n, k}(t)]}{\sqrt{{\rm {Var}}[f_{n, k}(t)]}},
\quad\text{and}\quad
\bar{\chi}_{n}(t) \ := \ \frac{\chi_{n}(t) - \mathbb{E}[\chi_{n}(t)]}{\sqrt{{{\rm{Var}}}[\chi_{n}(t)]}}.
\end{equation}

We first establish weak convergence for $\{\bar{f}_{n, k}(t) : t\geq 0\}$. Using the first equality in \eqref{eqn:defnEulerCharacteristicCliqueCount}, we then establish weak convergence for $\{\bar{\chi}_{n}(t) : t\geq 0\}$. Finally, Theorem~\ref{thm:ConvergenceBettiNumberOrnsteinUhlenbeckProcess} is proven using the second equality in \eqref{eqn:defnEulerCharacteristicCliqueCount} and Theorem~\ref{thm:VanishingOfAllButOneHomology}.

To carry this out, in Section~\ref{sec:MathematicalBackground} we  quote  some  results on the convergence  of random variables and processes. In Section~\ref{sec:PreliminaryResults}, we discuss some preliminary results concerning the mean and variance of $f_{n, k}(t)$, $\chi_{n}(t)$, and $\beta_{n, k}(t)$. The covariance functions of the processes  $\bar{f}_{n, k}$, $\bar{\chi}_{n}$, and
 $\bar\beta_{n, k}$ are derived in Section~\ref{sec:Covariance} and exploited in Section
  \ref{sec:FiniteDimensionalDistribution} to establish    convergence of the finite dimensional distributions of the  $\bar{\beta}_{n, k}$. In Section~\ref{sec:Tightness}, we establish tightness for the processes $\bar{\beta}_{n, k}$, and complete the proof of Theorem
\ref{thm:ConvergenceBettiNumberOrnsteinUhlenbeckProcess}.

\section{On convergence in distribution}
\label{sec:MathematicalBackground}

To help the reader and make this paper a little more self-contained, we now quote two theorems about weak convergence. The first, from \cite{barbour1989central},
is a central limit theorem for dissociated random variables (defined formally in the statement of Theorem \ref{thm:WassersteinMetricUB}).  The second,
 which comes from  combining  Theorems 7.8, 8.6, and 8.8 of \cite{ethier2009markov}, is about convergence, in the Skorokhod space,  to the stationary Ornstein-Uhlenbeck process.


Before stating the theorems, we remind the reader of the definition of the  $L_1$-Wasserstein metric for real valued random variables.
For two real valued random variables  $Y_1$ and $Y_2$, their  $L_1$-Wasserstein distance is
\[
d_1(Y_1, Y_2) = \sup_{\psi} |\mathbb{E}[\psi(Y_1)] - \mathbb{E}[\psi(Y_2)]|,
\]
where the $\sup$ is over all functions $\psi: \mathbb{R} \rightarrow \mathbb{R}$ with $\sup_{y_1 \neq y_2}{|\psi(y_1) - \psi(y_2)|)/|y_1 - y_2|} \leq 1$.
Recall also that convergence in this metric implies convergence in distribution.

\begin{theorem}[\cite{barbour1989central}]
\label{thm:WassersteinMetricUB}
Let $\{Y_i : i \equiv (i_1, \ldots, i_r) \in I\},$ for some index set $I$ of $r-$tuples, be a sequence of dissociated random variables. That is, for any $J, L \subseteq I,$ $\{Y_i: i \in J\}$ and $\{Y_i: i \in L\}$ are independent whenever $(\bigcup_{i \in J} \{i_1, \ldots, i_r\}) \cap  (\bigcup_{i \in L} \{i_1, \ldots, i_r\}) = \emptyset.$ Let $\mathscr{W} = \sum_{i \in I} Y_i$ and, for each $i \in I,$ let $\cY(i) := \{k \in I: \{k_1, \ldots, k_r\} \cap \{i_1, \ldots, i_r\} \neq \emptyset \}$ be the dependency neighbourhood of $i.$ If $\mathbb{E}[Y_i] \equiv 0$ and ${{\rm{Var}}}[\mathscr{W}] = 1,$ then there exists a universal constant $\rho > 0$ such that
\begin{equation}
\label{eqn:WassersteinMetricUB}
d_1(\mathscr{W}, \mathcal{N}(0,1)) \leq \rho \sum_{i \in I} \sum_{j, \ell \in \mathcal{Y}(i)}\mathbb{E}\big[|Y_iY_jY_\ell|\big] + \mathbb{E} \big[|Y_i Y_j|\big]\; \mathbb{E}\big[|Y_{\ell}|\big].
\end{equation}
\end{theorem}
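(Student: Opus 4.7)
The plan is to apply Stein's method for normal approximation in the $L_1$-Wasserstein metric. For each $1$-Lipschitz function $\psi:\mathbb{R} \to \mathbb{R}$, let $f = f_\psi$ denote the bounded solution of the Stein equation
\[
f'(w) - w f(w) \ =\ \psi(w) - \mathbb{E}[\psi(Z)], \qquad Z \sim \mathcal{N}(0,1).
\]
Classical estimates (e.g.\ Stein's monograph) yield uniform bounds $\|f'\|_\infty, \|f''\|_\infty \le c$ for an absolute constant $c$. Since $\mathbb{E}[\psi(\mathscr{W})] - \mathbb{E}[\psi(Z)] = \mathbb{E}[f'(\mathscr{W}) - \mathscr{W} f(\mathscr{W})]$, the task reduces to bounding $\sup_\psi |\mathbb{E}[f_\psi'(\mathscr{W}) - \mathscr{W} f_\psi(\mathscr{W})]|$ by the right-hand side of \eqref{eqn:WassersteinMetricUB}.

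Next I would exploit dissociation via a local decomposition. For each $i \in I$, set $T_i = \sum_{j \in \mathcal{Y}(i)} Y_j$, $\mathscr{W}_i = \mathscr{W} - T_i$, and, for $j \in \mathcal{Y}(i)$, $\mathscr{W}_{i,j} = \sum_{k \notin \mathcal{Y}(i) \cup \mathcal{Y}(j)} Y_k$. The hypothesis makes $Y_i$ independent of $\mathscr{W}_i$, and the pair $(Y_i, Y_j)$ independent of $\mathscr{W}_{i,j}$. Centring $\mathbb{E}[Y_i] = 0$ gives $\mathbb{E}[Y_i f(\mathscr{W}_i)] = 0$, whence
\[
\mathbb{E}[\mathscr{W} f(\mathscr{W})] \ =\ \sum_{i \in I} \mathbb{E}\bigl[Y_i\bigl(f(\mathscr{W}) - f(\mathscr{W}_i)\bigr)\bigr].
\]
The same centring argument applied pairwise shows $\mathbb{E}[Y_iY_j] = 0$ whenever $j \notin \mathcal{Y}(i)$, so $\mathrm{Var}(\mathscr{W}) = 1$ reads $\sum_i \sum_{j \in \mathcal{Y}(i)} \mathbb{E}[Y_iY_j] = 1$, which lets me rewrite $\mathbb{E}[f'(\mathscr{W})] = \sum_i \sum_{j \in \mathcal{Y}(i)} \mathbb{E}[Y_iY_j]\,\mathbb{E}[f'(\mathscr{W})]$.

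The technical core is a two-stage Taylor expansion. First,
\[
f(\mathscr{W}) - f(\mathscr{W}_i) \ =\ T_i f'(\mathscr{W}) + R_i,\qquad |R_i| \le \tfrac12 \|f''\|_\infty T_i^2,
\]
which, after multiplication by $Y_i$, summation over $i$, and expansion of $T_i^2$, contributes a term bounded by a constant multiple of $\sum_i \sum_{j,\ell \in \mathcal{Y}(i)} \mathbb{E}\bigl[|Y_iY_jY_\ell|\bigr]$, producing the first family in \eqref{eqn:WassersteinMetricUB}. For the residual difference $\sum_i \sum_{j \in \mathcal{Y}(i)} \bigl(\mathbb{E}[Y_iY_j]\,\mathbb{E}[f'(\mathscr{W})] - \mathbb{E}[Y_iY_j f'(\mathscr{W})]\bigr)$, I would substitute $f'(\mathscr{W})$ by $f'(\mathscr{W}_{i,j})$: independence of $(Y_i,Y_j)$ from $\mathscr{W}_{i,j}$ factors $\mathbb{E}[Y_iY_j f'(\mathscr{W}_{i,j})] = \mathbb{E}[Y_iY_j]\,\mathbb{E}[f'(\mathscr{W}_{i,j})]$, making the leading terms cancel exactly, while the substitution errors are controlled by $|f'(\mathscr{W}) - f'(\mathscr{W}_{i,j})| \le \|f''\|_\infty \sum_{\ell \in \mathcal{Y}(i) \cup \mathcal{Y}(j)} |Y_\ell|$. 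This produces contributions of precisely the two advertised shapes $\mathbb{E}|Y_iY_jY_\ell|$ and $\mathbb{E}|Y_iY_j|\,\mathbb{E}|Y_\ell|$.

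The main obstacle is the bookkeeping of indices and remainders: three versions of $\mathscr{W}$ interact with two Taylor expansions, and the intermediate bounds initially produce sums in which $\ell$ ranges over $\mathcal{Y}(i)\cup \mathcal{Y}(j)$ rather than over $\mathcal{Y}(i)$ alone. These would be reduced to the form stated in \eqref{eqn:WassersteinMetricUB} using the symmetry $j \in \mathcal{Y}(i) \Leftrightarrow i \in \mathcal{Y}(j)$, which permits a relabelling $(i,j,\ell) \mapsto (j,i,\ell)$ to absorb the $\mathcal{Y}(j)$ contributions into those indexed by $\mathcal{Y}(i)$. Taking $\rho$ to be the resulting maximum of numerical factors times $\|f'\|_\infty$ and $\|f''\|_\infty$ then yields the stated universal constant.
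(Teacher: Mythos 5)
This theorem is quoted, not proved, in the paper --- the authors simply note that it follows by combining Theorem~1 and (2.7) of \cite{barbour1989central}, whose own derivation is exactly the Stein's-method local-dependence argument you outline. Your sketch is correct and matches that source: the decomposition into $\mathscr{W}_i$ and $\mathscr{W}_{i,j}$, the use of ${\rm{Var}}[\mathscr{W}]=\sum_i\sum_{j\in\mathcal{Y}(i)}\mathbb{E}[Y_iY_j]$, the two Taylor steps, and the relabelling via the symmetry $j\in\mathcal{Y}(i)\Leftrightarrow i\in\mathcal{Y}(j)$ to absorb the $\ell\in\mathcal{Y}(j)$ contributions are all the right ingredients, so nothing essential is missing.
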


\eqref{eqn:WassersteinMetricUB} is obtained by combining Theorem 1 and (2.7) in \cite{barbour1989central} (see also discussion above (2.7) in \cite{barbour1989central}). Let $D_{\mathbb{R}}[0, \infty)$ denote the (Skorokhod) space of  right continuous functions on $[0,\infty$) with left limits, and   write $\hat{d}$ for the usual  (Skorokhod) metric on this space.

\begin{theorem}[\cite{ethier2009markov}]
\label{thm:SufficientConditionsConvergenceOrnsteinUhlenbeckProcess}
Let $\{X_n(t): t \geq 0\} $, $n\geq 1$,  be  a sequence of $(D_{\mathbb{R}}[0, \infty), \hat{d})$ valued stochastic processes  satisfying the following conditions:

{\renewcommand*\theenumi{$\pmb{\mathfrak{C}}_\arabic{enumi}$}
\begin{itemize}
\item \label{cond: Gaussian} Convergence of finite dimensional distributions: For any  $t_1, \ldots, t_m \geq 0$,
\[
(X_n(t_1), \ldots, X_n(t_m)) \Rightarrow (\mathcal{U}_{\lambda}(t_1), \ldots, \mathcal{U}_{\lambda}(t_m)) \text{ as $n \to \infty.$}
\]
\item Tightness: The sequence $\{\{X_{n}(t) : t\geq 0\} : n \geq 1\}$ is tight, for which it is sufficient that the following two conditions hold.
\begin{enumerate}
\item \label{cond: InitialDrift} There exists $\Upsilon > 0$ such that
\[
\lim_{\delta \rightarrow 0} \limsup_{n \rightarrow \infty} \mathbb{E}|X_n(\delta) - X_n(0)|^\Upsilon = 0.
\]

\item \label{cond: Tightness} For each $T > 0$,  there exist constants $\Upsilon_1 > 0$, $\Upsilon_2 > 1$, and $K > 0$ such that, for all $n$, $0 \leq t \leq T + 1$,
 and $0 \leq h \leq t$,
\[
\mathbb{E}\left[|X_n(t +h) - X_n(t)|^{\Upsilon_1} |X_n(t) - X_n(t - h)|^{\Upsilon_1} \right] \leq Kh^{\Upsilon_2}.
\]
\end{enumerate}
\end{itemize}}
Then $\{X_n(t) : t\geq 0\} \Rightarrow \{\mathcal{U}_{\lambda(t)} : t\geq 0\}$ as $n \rightarrow \infty$, where $\Rightarrow$  denotes convergence on the Skorokhod space.
\end{theorem}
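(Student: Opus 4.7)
The plan is to recognize this theorem as an assembly of three classical results from \cite{ethier2009markov}: Theorems 7.8, 8.6, and 8.8. The proof consists of matching the three hypotheses stated here to the inputs required by each of those results.

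First, I would invoke Theorem 7.8 of \cite{ethier2009markov}, which states that a sequence of $D_{\mathbb{R}}[0, \infty)$-valued processes converges in distribution in the Skorokhod topology to a continuous limit if and only if (i) the finite-dimensional distributions converge on a dense subset of $[0, \infty)$ containing $0$, and (ii) the sequence is tight on $(D_{\mathbb{R}}[0, \infty), \hat{d})$. The first hypothesis of our theorem is precisely (i), and since $\mathcal{U}_\lambda$ has continuous paths the limit process is identified unambiguously. Hence the task reduces to verifying tightness.

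Second, I would reduce tightness via Theorem 8.6 of \cite{ethier2009markov}, which splits it into a compact-containment condition plus a modulus-of-continuity estimate. Compact containment on each fixed horizon $T$ follows from the tightness of $\{X_n(0)\}$ (a direct consequence of the assumed finite-dimensional convergence to the Gaussian random variable $\mathcal{U}_\lambda(0)$) combined with the initial-drift hypothesis, which controls $\mathbb{E}|X_n(\delta) - X_n(0)|^\Upsilon$ and hence, by Markov's inequality, keeps the paths close to $X_n(0)$ near $t=0$. For the modulus-of-continuity estimate I would appeal to Theorem 8.8 of \cite{ethier2009markov}, whose hypothesis is precisely the Chentsov-type moment bound on products of adjacent increments that appears in our third condition.

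The main obstacle in this assembly is justifying \emph{why} the product $|X_n(t+h) - X_n(t)|^{\Upsilon_1} |X_n(t) - X_n(t-h)|^{\Upsilon_1}$ is the correct quantity to bound, rather than a single increment. The reason is that paths in $D_{\mathbb{R}}[0,\infty)$ may have isolated jumps, so a single-increment moment bound would be far too strong a demand; but two simultaneously large \emph{adjacent} increments indicate a genuine oscillation, which is the only obstruction to the Skorokhod modulus of continuity being small. The proof of Theorem 8.8 in \cite{ethier2009markov} handles this via a dyadic chaining argument that controls the probability of large simultaneous adjacent increments over successively finer partitions of $[0, T]$, yielding the required modulus estimate. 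Once this ingredient is granted, combining the three theorems delivers the stated convergence in $(D_{\mathbb{R}}[0,\infty), \hat{d})$.
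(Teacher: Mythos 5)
Your reconstruction follows the same route the paper intends: the paper gives no proof of this statement, simply asserting that it is obtained by combining Theorems 7.8, 8.6 and 8.8 of \cite{ethier2009markov}, and your identification of the roles of 7.8 (finite-dimensional convergence plus relative compactness implies weak convergence in $D_{\mathbb{R}}[0,\infty)$, with the continuous Gaussian limit identified by its finite-dimensional distributions) and of 8.8 (the Chentsov-type bound on products of adjacent increments verifies the increment condition of 8.6) is accurate, as is your explanation of why adjacent increments rather than single increments are the right object for c\`adl\`ag paths.

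The one step that is wrong as written is your treatment of compact containment and, relatedly, of the role of condition \ref{cond: InitialDrift}. That hypothesis controls only the single increment $X_n(\delta)-X_n(0)$ as $\delta\to 0$; it says nothing about $\sup_{t\le T}|X_n(t)|$, so tightness of $\{X_n(0)\}$ together with this hypothesis does not ``keep the paths close to $X_n(0)$'' on all of $[0,T]$ and does not yield compact containment. Its actual role in Theorem 8.6 of \cite{ethier2009markov} is different: the adjacent-increment bound of condition \ref{cond: Tightness} is vacuous at $t=0$ (there $0\le h\le t$ forces $h=0$), so a separate one-sided estimate at the origin is required, and that is exactly what condition \ref{cond: InitialDrift} supplies. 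Compact containment must be obtained another way; for real-valued processes the standard fix is either the local-compactness device accompanying Theorem 8.6 in \cite{ethier2009markov} (work in the one-point compactification of $\mathbb{R}$, where compact containment is automatic, and use the convergence of the finite-dimensional distributions to an $\mathbb{R}$-valued limit to return to $D_{\mathbb{R}}[0,\infty)$), or a maximal inequality derived from the increment bounds together with the tightness of the one-dimensional marginals that the finite-dimensional convergence hypothesis already provides. With that repair the assembly goes through as you describe.
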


\section{Preliminary Results}
\label{sec:PreliminaryResults}

We study here the asymptotic variances of $f_{n, k}(t), \chi_{n}(t)$, and $\beta_{n, k}(t).$ Due to stationarity, these variances are independent of $t$. We  start with some notation.

We   write $[n]:= \{1, \ldots, n\}$ for the vertex set of the dynamic \ER graph. This is not dependent on $t$.
We  write $\binom{[n]}{j + 1}$ to denote the collection of all subsets of $[n]$ of size $j + 1$, while $\binom{n}{j + 1}$ is the usual binomial coefficient. For $A \in \binom{[n]}{j + 1}$, let $1_A(t)$ be the indicator function for $A$ being  a $(j + 1)$-clique in $G(n, p, t)$. We can now write
\begin{equation}
\label{eqn:CliqueCount}
f_{n, j}(t) = \sum_{A \in \binom{[n]}{j + 1}} 1_A(t),
\end{equation}
from which it immediately follows that
\begin{equation}
\label{eqn:MeanCliqueCount}
\mathbb{E}[f_{n, j}(t)] = \tbinom{n}{j + 1} p^{\tbinom{j + 1}{2}}
\end{equation}
and
\begin{eqnarray*}
\mathbb{E}[f^2_{n, j}(t)] & = & \sum_{A_1 \in \tbinom{[n]}{j + 1}} \sum_{A_2 \in \tbinom{[n]}{j + 1}} \mathbb{E}[1_{A_1}(t)1_{A_2}(t)] \\
& = & \tbinom{n}{j + 1} \sum_{A_2 \in \tbinom{[n]}{j + 1}} \mathbb{E}[1_{A_1}(t)1_{A_2}(t)]\\
& = & \tbinom{n}{j + 1}\sum_{i = 0}^{j + 1}\tbinom{j + 1}{i} \tbinom{n - j - 1}{j + 1 - i} \tfrac{p^{2 \binom{j + 1}{2}}}{p^{\binom{i}{2}}},
\end{eqnarray*}
where in the second equality $A_1$ is an arbitrary but fixed $k$-face. The second equality follows because the inner sum on the right hand side  is the same  for each $A_1$, and the third equality follows by combining all faces $A_2$ that share $i$ vertices with $A_1$. Hence,
\begin{equation}
\label{eqn:VarianceCliqueCount}
{{\rm{Var}}}[f_{n, j }(t)] = \tbinom{n}{j + 1}\sum_{i = 0}^{j + 1}\tbinom{j + 1}{i} \tbinom{n - j - 1}{j + 1 - i} \tfrac{p^{2 \binom{j + 1}{2}}}{p^{\binom{i}{2}}} - \tbinom{n}{j + 1}^2 p^{2 \binom{j + 1}{2}}.
\end{equation}

The below result gives the behaviour of ${{\rm{Var}}}[f_{n, j}(t)]$ as $n \rightarrow \infty$ for different $j$.

\begin{lemma}
\label{lem:VarianceOfAllCliqueCounts}
Fix $k \geq 1$, $j \geq 0$, and $t \geq 0$. Let $p = n^\alpha$, $\alpha \in \left(-\tfrac{1}{k}, -\tfrac{1}{k + 1}\right)$.
\begin{enumerate}
\item[(i)] ${{\rm{Var}}}[f_{n, 0}(t)] \equiv 0$.
\item[(ii)]  If $j = 2k - 1$ and $\alpha \in \big[-\tfrac{1}{k + 0.5}, -\tfrac{1}{k + 1} \big)$, or if $1\leq j \leq 2k - 2$,  then
\[
{\rm{Var}}[f_{n, j}(t)] \leq  2^{j + 1} n^{2j} p^{2\binom{j + 1}{2} - 1}.
\]
\item[(iii)] If $j = 2k - 1$ and $\alpha \in \left(-\tfrac{1}{k}, -\tfrac{1}{k + 0.5}\right]$, or if $j \geq 2k$, then
\[
{\rm{Var}}[f_{n, j}(t)] \leq 2^{j + 1} n^{j + 1}p^{\binom{j + 1}{2}}.
\]
\end{enumerate}
\end{lemma}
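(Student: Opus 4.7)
The plan is to extract an upper bound from the variance formula in \eqref{eqn:VarianceCliqueCount} by identifying the dominant index in the double sum. Part (i) is immediate: $f_{n,0}(t)$ counts the vertices, which is deterministically $n$, so ${\rm Var}[f_{n,0}(t)] = 0$.

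For parts (ii) and (iii), I would first reorganise the expression. Applying Vandermonde's identity $\sum_{i=0}^{j+1}\binom{j+1}{i}\binom{n-j-1}{j+1-i} = \binom{n}{j+1}$ lets one absorb the subtracted term in \eqref{eqn:VarianceCliqueCount}, yielding
\[
{\rm Var}[f_{n,j}(t)] = \binom{n}{j+1}\sum_{i=2}^{j+1}\binom{j+1}{i}\binom{n-j-1}{j+1-i}\bigl(p^{2\binom{j+1}{2}-\binom{i}{2}} - p^{2\binom{j+1}{2}}\bigr),
\]
where the lower limit shifts to $i = 2$ because the $i = 0, 1$ summands vanish (they share $\binom{i}{2}=0$ edges). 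Combining $\binom{n}{j+1} \leq n^{j+1}$, $\binom{n-j-1}{j+1-i} \leq n^{j+1-i}$, $\sum_{i=2}^{j+1}\binom{j+1}{i} \leq 2^{j+1}$, and the trivial pointwise bound on the parenthesised difference reduces the problem to
\[
{\rm Var}[f_{n,j}(t)] \leq 2^{j+1}\, n^{j+1}\, p^{2\binom{j+1}{2}}\, \max_{2 \leq i \leq j+1} n^{j+1-i}\, p^{-\binom{i}{2}}.
\]

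The heart of the argument is to locate this maximum. Writing it as $n^{g(i)}$ with $g(i) := (j+1-i) - \alpha \binom{i}{2}$, I observe that $g$ is a convex quadratic in $i$ since $-\alpha > 0$. Its maximum over $\{2,\ldots,j+1\}$ must therefore be attained at an endpoint. Substituting $g(2) = j - 1 - \alpha$ recovers the case (ii) bound $2^{j+1} n^{2j} p^{2\binom{j+1}{2}-1}$, while substituting $g(j+1) = -\alpha \binom{j+1}{2}$ recovers the case (iii) bound $2^{j+1} n^{j+1} p^{\binom{j+1}{2}}$.

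Finally, I would verify that the endpoint choices align with the stated regimes: a direct comparison shows $g(2) \geq g(j+1)$ iff $\alpha \geq -2/(j+2)$, so case (ii) wins when $\alpha$ is relatively close to zero and case (iii) otherwise. For $1 \leq j \leq 2k-2$ one has $-2/(j+2) \leq -1/k < \alpha$, so case (ii) applies automatically; for $j = 2k-1$ the threshold is exactly $-1/(k+0.5)$, producing the split stated; and for $j \geq 2k$ one has $-2/(j+2) \geq -1/(k+1) > \alpha$, so case (iii) applies. The only real subtlety is recognising that the convexity of $g$ forces the dominant contribution in the double sum to come from one of its endpoints; once that is in hand, the rest is bookkeeping.
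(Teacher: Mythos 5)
Your proposal is correct and follows essentially the same route as the paper: simplify via Vandermonde to a sum from $i=2$, reduce to maximising a quadratic exponent $g(i)$ (the paper's $\zeta_j(i)$ differs from your $g(i)$ only by an $i$-independent shift), invoke convexity of $g$ (from $\alpha<0$) to locate the max at an endpoint, and check that the stated regimes correspond to which endpoint wins. The algebraic checks of the regime thresholds ($-2/(j+2)$ versus $-1/k$, $-1/(k+0.5)$, $-1/(k+1)$) are all accurate.
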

\begin{proof}
The first claim is trivial since $f_{n, 0}(t) \equiv n.$ So we prove only the other two.
Since $\binom{n}{j + 1} = \sum_{i = 0}^{j + 1}\binom{j + 1}{i}\binom{n - j - 1}{j + 1 - i}$, it follows from \eqref{eqn:VarianceCliqueCount} that
\begin{equation}
\label{eqn:VarianceCliqueCountSimplifiedExp}
{\rm{Var}}[f_{n, j}(t)]  =   \sum_{i = 2}^{j + 1}  \tbinom{j + 1}{i} \tbinom{n}{j + 1} \tbinom{n - j - 1}{j + 1 - i} \left[p^{2\binom{j + 1}{2} - \binom{i}{2}} - p^{2\binom{j + 1}{2}}\right].
\end{equation}
The summation starts from $2$ because the term in the square brackets above is zero for $i = 0,1$. Note that $\tbinom{n}{j + 1} \tbinom{n - j - 1}{j + 1 - i} \leq n^{2 j + 2 - i}$. Further, $p = n^\alpha$ with $\alpha < 0$. Hence the term inside the square bracket is positive for each $i$, and bounded from above by $p^{2\tbinom{j + 1}{2} - \tbinom{i}{2}}$. Hence,  to prove the desired result, it suffices to obtain bounds for $\sum_{i = 2}^{j + 1} \tbinom{j + 1}{i} n^{\zeta_j(i)}$, where $\zeta_j(i) = 2j + 2 - i + \alpha [2\binom{j + 1}{2} - \binom{i}{2}]$.

As $\alpha < 0$, $\zeta_j$ is a convex function. Hence, one of $\zeta_j(2)$ or $\zeta_j(j + 1)$ maximizes $\zeta_j(i)$ for $i \in \{2, \ldots, j + 1\}$. When the conditions of (ii) hold, $\zeta_j(2) \geq \zeta_j(j + 1)$. Similarly, when the conditions of (iii) hold, $\zeta_j(j + 1) \geq \zeta_j(2)$. At $\alpha =  \tfrac{1}{k + 0.5}$, $\zeta_j(2) = \zeta_j(j+1)$. Since $\sum_{i = 2}^{j + 1} \tbinom{j + 1}{i} = 2^{j + 1}$, the desired result is now easy to see.
\end{proof}

Let $p$ be as in Lemma~\ref{lem:VarianceOfAllCliqueCounts}. The next result computes the exact order of ${\rm{Var}}[f_{n, k}(t)]$.

\begin{lemma}
\label{lem:VarianceCliqueCount}
Fix $k \geq 1$.  Let $p = n^\alpha$, $\alpha \in \left(-\tfrac{1}{k}, -\tfrac{1}{k + 1}\right)$. Then, for each $t \geq 0$,
\[
{\rm{Var}}[f_{n, k}(t)] = \Theta (n^{2k}p^{2\binom{k + 1}{2} - 1}).
\]
\end{lemma}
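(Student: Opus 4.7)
The plan is to establish the $\Theta$-bound by combining an upper bound that follows directly from Lemma~\ref{lem:VarianceOfAllCliqueCounts} with a matching lower bound obtained by isolating the dominant term in the explicit decomposition \eqref{eqn:VarianceCliqueCountSimplifiedExp}.

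For the upper bound, I would specialise Lemma~\ref{lem:VarianceOfAllCliqueCounts} to $j = k$. When $k \geq 2$, the index $j = k$ lies in the range $1 \leq j \leq 2k - 2$, so part (ii) immediately gives
\[
{\rm Var}[f_{n,k}(t)] \leq 2^{k+1}\, n^{2k}\, p^{2\binom{k+1}{2}-1}.
\]
When $k = 1$ we are in the borderline case $j = 2k - 1 = 1$, and a quick check shows that both the part (ii) bound (when $\alpha \in [-2/3, -1/2)$) and the part (iii) bound $2^{j+1} n^{j+1} p^{\binom{j+1}{2}} = 4 n^2 p$ (when $\alpha \in (-1, -2/3]$) match the target order $n^{2k} p^{2\binom{k+1}{2}-1} = n^2 p$.

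For the lower bound, every summand in \eqref{eqn:VarianceCliqueCountSimplifiedExp} is nonnegative, as already remarked in the proof of Lemma~\ref{lem:VarianceOfAllCliqueCounts}, so it suffices to retain just the $i = 2$ term. That term equals
\[
\binom{k+1}{2}\binom{n}{k+1}\binom{n-k-1}{k-1}\, p^{2\binom{k+1}{2}-1}\,(1 - p),
\]
and since $\binom{n}{k+1}\binom{n-k-1}{k-1} = \Theta(n^{2k})$ and $p = n^\alpha \to 0$, this is $\Theta(n^{2k} p^{2\binom{k+1}{2}-1})$. Note that for $k = 1$ the sum in \eqref{eqn:VarianceCliqueCountSimplifiedExp} contains only this single term, so the lower bound is in fact exact in that case.

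There is not really a substantive obstacle: the analytical work sits inside Lemma~\ref{lem:VarianceOfAllCliqueCounts}, whose convexity argument on $\zeta_j(i)$ already identifies $i = 2$ as the extremising index in the regime $j = k$ of interest. The only fresh observation needed is that this same extremising term also delivers the matching lower bound, because all contributions to \eqref{eqn:VarianceCliqueCountSimplifiedExp} carry the same (nonnegative) sign.
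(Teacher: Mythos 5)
Your proposal is correct and takes essentially the same approach as the paper: the paper also reduces to \eqref{eqn:VarianceCliqueCountSimplifiedExp}, uses $\tbinom{n}{k+1}\tbinom{n-k-1}{k+1-i}=\Theta(n^{2k+2-i})$, and invokes the convexity argument of Lemma~\ref{lem:VarianceOfAllCliqueCounts} to conclude the $i=2$ term is dominant and of the claimed order. Your version simply spells this out more explicitly by splitting into an upper bound from Lemma~\ref{lem:VarianceOfAllCliqueCounts}(ii)/(iii) and a lower bound from the nonnegative $i=2$ summand, which is a helpful clarification but not a different argument.
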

\begin{proof}
Recall from  \eqref{eqn:VarianceCliqueCountSimplifiedExp} that
\[
{\rm{Var}}[f_{n, k}(t)]  =   \sum_{i = 2}^{k + 1}  \tbinom{k + 1}{i} \tbinom{n}{k + 1} \tbinom{n - k - 1}{k + 1 - i} \left[p^{2\binom{k + 1}{2} - \binom{i}{2}} - p^{2\binom{k + 1}{2}}\right].
\]
Observe that $2\binom{k + 1}{2} - \binom{i}{2} < 2\binom{k + 1}{2}$ for each $i \in \{2, \ldots, k + 1\}$. Hence to prove the desired result, it suffices to show that
\[
\sum_{i = 2}^{k + 1}  \tbinom{k + 1}{i} \tbinom{n}{k + 1} \tbinom{n - k - 1}{k + 1 - i} \left[p^{2\binom{k + 1}{2} - \binom{i}{2}}\right] = \Theta (n^{2k}p^{2\binom{k + 1}{2} - 1}).
\]
Since $\tbinom{n}{k + 1} \tbinom{n - k - 1}{k + 1 - i} = \Theta(n^{2k + 2 - i})$, arguing as in the proof of Lemma~\ref{lem:VarianceOfAllCliqueCounts}, the above claim is easy to see, and the result follows.
\end{proof}

The following result is now immediate from Lemmas~\ref{lem:VarianceOfAllCliqueCounts} and \ref{lem:VarianceCliqueCount}.
\begin{corollary}
\label{cor:VarjVarkRatio}
Fix $k \geq 1$, $j \geq 0$, and $t \geq 0$. Let $p = n^\alpha$, $\alpha \in \left(-\tfrac{1}{k}, -\tfrac{1}{k + 1}\right).$ Then
\[
\lim_{n \rightarrow \infty} \frac{{\rm{Var}}[f_{n, j}(t)]}{{\rm{Var}}[f_{n, k}(t)]} = 0 \text{ whenever $j \neq k.$}
\]
\end{corollary}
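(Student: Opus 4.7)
The plan is to take the hint of the author's phrase ``immediate from Lemmas~\ref{lem:VarianceOfAllCliqueCounts} and \ref{lem:VarianceCliqueCount}'' seriously: simply form the ratio using the tight order for the denominator and the appropriate upper bound for the numerator, then verify that the resulting exponent of $n$ is negative throughout the allowed range of $\alpha$. First I would handle the trivial case $j=0$, where $\mathrm{Var}[f_{n,0}(t)]\equiv 0$ by Lemma~\ref{lem:VarianceOfAllCliqueCounts}(i), so the ratio is identically $0$.

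For $j \geq 1$, $j \neq k$, I would split into the two regimes identified in Lemma~\ref{lem:VarianceOfAllCliqueCounts}. In regime (ii), dividing the bound by $\mathrm{Var}[f_{n,k}(t)]=\Theta(n^{2k}p^{2\binom{k+1}{2}-1})$ gives, after substituting $p=n^\alpha$, an expression of order $n^{\eta_j}$ with
\[
\eta_j \;=\; 2(j-k) + \alpha\bigl(j(j+1) - k(k+1)\bigr).
\]
For $j<k$ one has $\eta_j<0$ iff $\alpha>-2(k-j)/(k(k+1)-j(j+1)) = -2/(j+k+1)$, which is weakest at $j=k-1$ and then reduces to the hypothesis $\alpha>-1/k$. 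For $j>k$ (only possible in this regime when $k+1\leq j\leq 2k-2$, or at $j=2k-1$ in the stated subinterval of $\alpha$) the analogous computation yields $\eta_j<0$ iff $\alpha<-2/(j+k+1)$, tightest at $j=k+1$ giving $\alpha<-1/(k+1)$, again the hypothesis. In regime (iii), dividing the bound by $\mathrm{Var}[f_{n,k}(t)]$ produces an exponent
\[
\eta_j' \;=\; j+1-2k + \alpha\Bigl(\tbinom{j+1}{2} - 2\tbinom{k+1}{2} + 1\Bigr),
\]
and a similar one-line check using $\alpha<-1/(k+1)$ (together with the sub-range restriction on $\alpha$ when $j=2k-1$) shows $\eta_j'<0$.

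The only real obstacle is organisational rather than mathematical: keeping the four subcases straight (small $j$ vs.\ large $j$, and within each, whether one is above or below $k$), and checking that the borderline indices $j=k-1$, $j=k+1$, and $j=2k-1$ (where the two bounds of Lemma~\ref{lem:VarianceOfAllCliqueCounts} meet at $\alpha = -1/(k+0.5)$) each give strictly negative exponents under the strict inequalities $-1/k<\alpha<-1/(k+1)$. Since the strictness of these inequalities precisely matches the strictness needed in each subcase, the bookkeeping works out and the corollary follows.
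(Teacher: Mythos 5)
Your proposal is correct and fills in exactly the computation the paper treats as immediate from Lemmas~\ref{lem:VarianceOfAllCliqueCounts} and~\ref{lem:VarianceCliqueCount}: divide the appropriate upper bound on ${\rm{Var}}[f_{n,j}]$ by the tight order of ${\rm{Var}}[f_{n,k}]$, substitute $p=n^\alpha$, and verify the exponent is strictly negative in each subcase, with the critical indices $j=k-1$ and $j=k+1$ yielding precisely the endpoint conditions $\alpha>-1/k$ and $\alpha<-1/(k+1)$. The casework for $j=2k-1$ across both regimes of Lemma~\ref{lem:VarianceOfAllCliqueCounts} also checks out (noting that for $k=1$ this index coincides with $k$ and is excluded), so the argument is complete and agrees with the paper's intent.
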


We next compare ${\rm{Var}}[(-1)^k\chi_{n}(t) - f_{n, k}(t)]$, ${\rm{Var}}[\chi_{n}(t)]$ with ${\rm{Var}}[f_{n, k}(t)]$ as $n \rightarrow \infty$.
\begin{lemma}
\label{lem:AsymptoticDifferenceCliqueCountEulerCharacterisitc}
Fix $k \geq 1$. Let $p = n^\alpha$, $\alpha \in \left(-\tfrac{1}{k}, -\tfrac{1}{k + 1}\right)$. Then, for each $t \geq 0$,
\[
\lim_{n \rightarrow \infty}\frac{{\rm{Var}}[(-1)^k\chi_{n}(t) - f_{n, k}(t)]}{{\rm{Var}}[f_{n, k}(t)]} = 0.
\]
\end{lemma}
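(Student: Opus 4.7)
My plan is to decompose $(-1)^k\chi_n(t) - f_{n,k}(t)$ via the Euler--Poincar\'e identity in \eqref{eqn:defnEulerCharacteristicCliqueCount} and control the resulting sum by Minkowski's inequality. Since
\[
(-1)^k\chi_n(t) - f_{n,k}(t) \;=\; \sum_{\substack{0 \leq j \leq n-1 \\ j \neq k}} (-1)^{j+k} f_{n,j}(t),
\]
and variance is shift invariant, the $L^2$-triangle inequality gives
\[
\sqrt{{\rm{Var}}[(-1)^k\chi_n(t) - f_{n,k}(t)]} \;\leq\; \sum_{\substack{0 \leq j \leq n-1 \\ j \neq k}} \sqrt{{\rm{Var}}[f_{n,j}(t)]}.
\]
Dividing by $\sqrt{{\rm{Var}}[f_{n,k}(t)]}$, it suffices to show that $\sum_{j \neq k} r_{n,j} \to 0$ where $r_{n,j} := \sqrt{{\rm{Var}}[f_{n,j}(t)]/{\rm{Var}}[f_{n,k}(t)]}$.

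Next, I would fix a constant threshold $J_0 = J_0(k, \alpha)$, independent of $n$, with $J_0 \geq 2k$ and large enough for the tail estimate below to work, and split the sum as $\sum_{0 \leq j \leq J_0,\, j \neq k} r_{n,j} + \sum_{j > J_0} r_{n,j}$. The head has at most $J_0$ terms (a constant), and each $r_{n,j} \to 0$ by Corollary~\ref{cor:VarjVarkRatio}, so this part vanishes as $n \to \infty$.

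For the tail $j > J_0 \geq 2k$, Lemma~\ref{lem:VarianceOfAllCliqueCounts}(iii) together with Lemma~\ref{lem:VarianceCliqueCount} yield
\[
r_{n,j} \;\leq\; C \cdot 2^{(j+1)/2}\, n^{\eta(j)}, \qquad \eta(j) := \frac{j+1-2k}{2} + \frac{\alpha}{2}\!\left[\frac{j(j+1)}{2} - k(k+1) + 1\right],
\]
for some constant $C = C(k,\alpha) > 0$. A direct computation gives $\eta(j+1) - \eta(j) = \tfrac{1}{2} + \tfrac{\alpha(j+1)}{2}$, which is at most $-1$ once $j \geq 3/|\alpha|$. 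Choosing $J_0$ past this value, the successive upper bounds on $r_{n,j}$ satisfy, for all large $n$, a ratio bounded by $\sqrt{2}\, n^{-1} \leq 1/\sqrt{2}$, so the tail is dominated by a geometric series with first term of order $2^{J_0/2}\, n^{\eta(J_0+1)}$. Because $\eta$ is quadratic in $j$ with strictly negative leading coefficient $\alpha/2$, one can further enlarge $J_0$ (still independent of $n$) to ensure $\eta(J_0+1) < 0$, and then this first term, and hence the whole tail, tends to $0$.

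The main obstacle is the tail: the combinatorial factor $2^{j+1}$ in Lemma~\ref{lem:VarianceOfAllCliqueCounts}(iii) grows exponentially in $j$, and must be beaten by the quadratic-in-$j$ decay coming from $p = n^\alpha$. The careful choice $J_0 \gtrsim 1/|\alpha|$ is precisely what makes this possible. Once this threshold is in place, the rest amounts to a routine combination of Minkowski's inequality, Corollary~\ref{cor:VarjVarkRatio}, and the explicit variance estimates already obtained in Section~\ref{sec:PreliminaryResults}.
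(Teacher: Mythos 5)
Your proof is correct and follows essentially the same strategy as the paper's: decompose $(-1)^k\chi_n - f_{n,k}$ via the Euler--Poincar\'e identity, split the resulting sum into finitely many ``head'' terms handled by Corollary~\ref{cor:VarjVarkRatio} and a ``tail'' handled by the explicit bound in Lemma~\ref{lem:VarianceOfAllCliqueCounts}(iii) against the order from Lemma~\ref{lem:VarianceCliqueCount}. The main difference is cosmetic: you use Minkowski ($L^2$-triangle inequality) where the paper expands the variance and applies Cauchy--Schwarz on the covariances, and your tail estimate uses an $\alpha$-dependent cutoff $J_0$ with a geometric-series domination, whereas the paper uses the fixed cutoff $4k+5$, bounds the number of tail terms by $n$, and then verifies that the resulting extra $n^2$ factor is absorbed by the super-polynomial decay of $n^{2j+1+\alpha\binom{j+1}{2}}$ in $j$. (One small slip: the leading coefficient of $\eta(j)$ as a quadratic in $j$ is $\alpha/4$, not $\alpha/2$; this does not affect the argument since only its sign matters.)
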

\begin{proof}
From \eqref{eqn:defnEulerCharacteristicCliqueCount}, we have
\begin{eqnarray}
& & {\rm{Var}}[(-1)^k\chi_{n}(t) - f_{n, k}(t)] \nonumber \\
& \leq & \sum_{0 \leq j \leq n - 1, \; j \neq k} {\rm{Var}}[f_{n, j}(t)]  + 2 \sum_{0 \leq i < j \leq n - 1; \; i,j \neq k} |{\rm{Cov}}[f_{n, i}(t), f_{n, j}(t)]| \nonumber \\
& = & 2 \sum_{0 \leq i \leq j \leq (n - 1); \; i, j \neq k}\sqrt{{\rm{Var}}[f_{n, i}(t)] {\rm{Var}}[ f_{n, j}(t)]}, \nonumber\\
& \leq & 2 \sum_{0 \leq i \leq j \leq 4k + 4; \; i, j \neq k} \sqrt{{\rm{Var}}[f_{n, i}(t)] {\rm{Var}}[ f_{n, j}(t)]} \nonumber \\
& & + \; 2 \sum_{0 \leq i \leq (n - 1), \; i \neq k} \; \sum_{4k + 5 \leq j \leq (n - 1)} \sqrt{{\rm{Var}}[f_{n, i}(t)] {\rm{Var}}[ f_{n, j}(t)]}. \label{eqn:BoundEulerCliqueCountDiff}
\end{eqnarray}
Let $n$ be sufficiently large. From Lemma~\ref{lem:VarianceOfAllCliqueCounts}, note that, for $j \geq 2k$,
\[
{\rm{Var}}[f_{n, j}(t)] \leq 2^{j + 1} n^{j + 1} p^{\tbinom{j + 1}{2}} \leq 2n^{2j + 1 + \alpha \tbinom{j + 1}{2}}.
\]
But for all $j \geq 2k + 1$, $2j + 1 + \alpha \tbinom{j + 1}{2}$ monotonically decreases with $j.$ Hence
\[
{\rm{Var}}[f_{n, j}(t)] \leq 2 n^{2(4k + 5) + 1  + \alpha \tbinom{(4k + 5) + 1}{2}}
\]
for all $j \geq 4k + 5.$ This implies that
\begin{eqnarray}
& & \sum_{0 \leq i \leq (n - 1), i \neq k} \;  \sum_{4k + 5 \leq j \leq (n - 1)} \sqrt{{\rm{Var}}[f_{n, i}(t)]}\sqrt{{\rm{Var}}[f_{n, j}(t)]} \nonumber \\
& \leq & n \sqrt{2 n^{2(4k + 5) + 1  + \alpha \tbinom{(4k + 5) + 1}{2}}} \sum_{0 \leq i \leq (n - 1), i \neq k} \sqrt{{\rm{Var}}[f_{n, i}(t)]} \nonumber \\
& \leq & n \sqrt{2 n^{2(4k + 5) + 1  + \alpha \tbinom{(4k + 5) + 1}{2}}} \sum_{0 \leq i \leq 4k + 4, i \neq k} \sqrt{{\rm{Var}}[f_{n, i}(t)]} \nonumber \\
& & + \; n \sqrt{2 n^{2(4k + 5) + 1  + \alpha \tbinom{(4k + 5) + 1}{2}}} \sum_{4k + 5 \leq i \leq (n - 1)} \sqrt{{\rm{Var}}[f_{n, i}(t)]}\nonumber \\
& \leq & n \sqrt{2 n^{2(4k + 5) + 1  + \alpha \tbinom{(4k + 5) + 1}{2}}} \sum_{0 \leq i \leq 4k + 4, i \neq k} \sqrt{{\rm{Var}}[f_{n, i}(t)]} \nonumber \\
& & + \; n^2 \left[2 n^{2(4k + 5) + 1  + \alpha \tbinom{(4k + 5) + 1}{2}} \right]. \label{eqn:nSqTermsBound}
\end{eqnarray}
Observe that
\[
\lim_{n \to \infty} n^2 \left[\frac{n^{2(4k + 5) + 1} p^{\tbinom{(4k + 5) + 1}{2}}}{n^{2k} p^{2\tbinom{k + 1}{2} - 1}}\right] = 0.
\]
Combining this, \eqref{eqn:nSqTermsBound}, Lemma~\ref{lem:VarianceCliqueCount}, and Corollary~\ref{cor:VarjVarkRatio}, it follows that
\[
\lim_{n \to \infty} \frac{\sum_{0 \leq i \leq (n - 1), i \neq k} \;  \sum_{4k + 5 \leq j \leq (n - 1)} \sqrt{{\rm{Var}}[f_{n, i}(t)]}\sqrt{{\rm{Var}}[f_{n, j}(t)]}}{{\rm{Var}}[f_{n, k}(t)]} = 0.
\]
Similarly, from Corollary~\ref{cor:VarjVarkRatio}, we have
\[
\lim_{n \to \infty} \frac{\sum_{0 \leq i \leq j \leq 4k + 4; \; i, j \neq k} \sqrt{{\rm{Var}}[f_{n, i}(t)]}\sqrt{{\rm{Var}}[f_{n, j}(t)]}}{{\rm{Var}}[f_{n, k}(t)]} = 0.
\]
Combining the above two relations with \eqref{eqn:BoundEulerCliqueCountDiff}, the desired result is easy to see.
\end{proof}

\begin{lemma}
\label{lem:AsymptoticVarianceEulerCharacteristicCliqueCountSame}
Fix $k \geq 1$. Let $p = n^\alpha$, $\alpha \in \left(-\tfrac{1}{k}, -\tfrac{1}{k + 1}\right)$. Then, for each $t \geq 0$,
\[
\lim_{n \rightarrow \infty}\frac{{\rm{Var}}[\chi_{n}(t)]}{{\rm{Var}}[f_{n, k}(t)]} = 1.
\]
\end{lemma}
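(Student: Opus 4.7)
The plan is to reduce this claim directly to the previous lemma (Lemma~\ref{lem:AsymptoticDifferenceCliqueCountEulerCharacterisitc}) together with the elementary Cauchy--Schwarz bound on covariances. Since variance is unaffected by sign, ${\rm{Var}}[\chi_n(t)] = {\rm{Var}}[(-1)^k\chi_n(t)]$, so I would write
\[
(-1)^k\chi_n(t) \ = \ f_{n,k}(t) + R_n(t), \qquad R_n(t) \ := \ (-1)^k\chi_n(t) - f_{n,k}(t),
\]
and then expand
\[
{\rm{Var}}[\chi_n(t)] \ = \ {\rm{Var}}[f_{n,k}(t)] + 2\,{\rm{Cov}}[f_{n,k}(t), R_n(t)] + {\rm{Var}}[R_n(t)].
\]

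Dividing through by ${\rm{Var}}[f_{n,k}(t)]$, the claim reduces to showing that the last two terms, once normalised, vanish as $n \to \infty$. Lemma~\ref{lem:AsymptoticDifferenceCliqueCountEulerCharacterisitc} gives exactly ${\rm{Var}}[R_n(t)]/{\rm{Var}}[f_{n,k}(t)] \to 0$. For the covariance term I would use Cauchy--Schwarz,
\[
\bigl|{\rm{Cov}}[f_{n,k}(t), R_n(t)]\bigr| \ \leq \ \sqrt{{\rm{Var}}[f_{n,k}(t)]\,{\rm{Var}}[R_n(t)]},
\]
so that
\[
\frac{|{\rm{Cov}}[f_{n,k}(t), R_n(t)]|}{{\rm{Var}}[f_{n,k}(t)]} \ \leq \ \sqrt{\frac{{\rm{Var}}[R_n(t)]}{{\rm{Var}}[f_{n,k}(t)]}} \ \longrightarrow \ 0,
\]
again by the previous lemma. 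Combining the two estimates gives ${\rm{Var}}[\chi_n(t)]/{\rm{Var}}[f_{n,k}(t)] \to 1$.

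There is no real obstacle here: all the substantive work has been done in Lemma~\ref{lem:AsymptoticDifferenceCliqueCountEulerCharacterisitc}, where the contributions of the $f_{n,j}$ for $j \neq k$ were shown to be asymptotically negligible relative to ${\rm{Var}}[f_{n,k}(t)]$. This present lemma is essentially the observation that ``asymptotically negligible in variance'' is preserved under adding in the $f_{n,k}$ piece, via the standard variance-of-a-sum identity and Cauchy--Schwarz.
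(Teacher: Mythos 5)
Your proposal is correct and is essentially identical to the paper's own argument: both decompose $(-1)^k\chi_n(t)$ as $f_{n,k}(t)$ plus the remainder, expand the variance of the sum, and control the cross term by Cauchy--Schwarz using Lemma~\ref{lem:AsymptoticDifferenceCliqueCountEulerCharacterisitc}. Nothing further is needed.
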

\begin{proof}
By adding and subtracting $f_{n, k}(t)$, we have
\begin{eqnarray*}
{\rm{Var}}[\chi_{n}(t)] & = & {\rm{Var}}[f_{n, k}(t)] + {\rm{Var}}[(-1)^k\chi_{n}(t) - f_{n, k}(t)] \\
& & \qquad\qquad +\;  2 {\rm{Cov}}[(-1)^k\chi_{n}(t) - f_{n, k}(t), f_{n, k}(t)].
\end{eqnarray*}
Hence it follows that
\[
\left|\frac{{\rm{Var}}[\chi_{n}(t)]}{{\rm{Var}}[f_{n, k}(t)]} - 1 \right| \leq \frac{{\rm{Var}}[(-1)^k\chi_{n}(t) - f_{n, k}(t)]}{{\rm{Var}}[f_{n, k}(t)]} + 2\sqrt{\frac{{\rm{Var}}[(-1)^k\chi_{n}(t) - f_{n, k}(t)]}{{\rm{Var}}[f_{n, k}(t)]}}.
\]
The desired result now follows from Lemma~\ref{lem:AsymptoticDifferenceCliqueCountEulerCharacterisitc}.
\end{proof}

In a similar spirit to the above two results, Lemmas~\ref{lem:AsymptoticDifferenceBettiNumberEulerCharacterisitc} and \ref{lem:AsymptoticVarianceBettiNumberEulerCharacteristicSame} given below compare the limiting behaviour of ${\rm{Var}}[\chi_{n}(t)]$ with ${\rm{Var}}[\beta_{n, k}(t)]$. These results  are be due to Kahle and Meckes in \cite{kahle2014erratum}.  (The results there were established for \ER graphs and hence are applicable in our setup to $G(n, p, t)$ for any fixed $t$. Their notations $\beta_k$ and $\tilde{\beta}_{k}$ correspond to  $\beta_{n, k}(t)$ and $(-1)^k \chi_{n}(t)$ in our context.)

\begin{lemma}[\cite{kahle2014erratum}]
\label{lem:AsymptoticDifferenceBettiNumberEulerCharacterisitc}
Fix $k \geq 1$. Let $p = n^\alpha$, $\alpha \in \left(-\tfrac{1}{k}, -\tfrac{1}{k + 1}\right)$. Then, for each $t \geq 0$,
\[
\lim_{n \rightarrow \infty}\frac{{\rm{Var}}[\beta_{n, k}(t) - (-1)^k\chi_{n}(t)]}{{\rm{Var}}[\chi_{n}(t)]} = 0.
\]
\end{lemma}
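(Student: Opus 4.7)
The plan is to show that $\mathbb{E}[(\beta_{n,k}(t) - (-1)^k \chi_n(t))^2]$ decays super-polynomially in $n$. Since ${\rm{Var}}[\chi_n(t)]$ is of polynomial order in $n$ (by Lemmas~\ref{lem:VarianceCliqueCount} and \ref{lem:AsymptoticVarianceEulerCharacteristicCliqueCountSame}), and since ${\rm{Var}}[X] \leq \mathbb{E}[X^2]$, this will immediately yield the claim. As a starting point, the second equality in \eqref{eqn:defnEulerCharacteristicCliqueCount} lets me rewrite
\[
Z_n(t) := \beta_{n,k}(t) - (-1)^k \chi_n(t) = \sum_{j \neq k} (-1)^{j+k+1} \beta_{n,j}(t).
\]
Define $A_n(t) := \{\beta_{n,j}(t) = 0 \text{ for all } j \neq k\}$. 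Then $Z_n(t) \mathbf{1}_{A_n(t)} = 0$ identically, and by Theorem~\ref{thm:VanishingOfAllButOneHomology}, $\Pr(A_n(t)^c) = o(n^{-M})$ for every $M > 0$.

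Next, I would bound $|Z_n(t)|$ deterministically. The Morse-type inequality $\beta_{n,j}(t) \leq f_{n,j}(t) \leq \binom{n}{j+1}$, combined with the fact that $f_{n,j}(t) = 0$ whenever $j + 1$ exceeds the clique number $\omega_n(t)$ of $G(n, p; t)$, gives
\[
|Z_n(t)| \;\leq\; \sum_{j=0}^{\omega_n(t)-1} \binom{n}{j+1} \;\leq\; \omega_n(t)\, n^{\omega_n(t)}.
\]
Cauchy--Schwarz then yields
\[
\mathbb{E}[Z_n(t)^2] \;=\; \mathbb{E}[Z_n(t)^2 \mathbf{1}_{A_n(t)^c}] \;\leq\; \bigl(\mathbb{E}[\omega_n(t)^4\, n^{4\omega_n(t)}]\bigr)^{1/2} \bigl(\Pr(A_n(t)^c)\bigr)^{1/2}.
\]

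The main technical step, and the one I expect to be trickiest, is to show that $\mathbb{E}[\omega_n(t)^4\, n^{4\omega_n(t)}]$ grows only polynomially in $n$. For this, Markov's inequality gives $\Pr(\omega_n(t) \geq k) \leq \mathbb{E}[f_{n,k-1}(t)] \leq n^{k(1 + \alpha(k-1)/2)}$, so that
\[
n^{4k}\, \Pr(\omega_n(t) = k) \;\leq\; n^{k(5 + \alpha(k-1)/2)}.
\]
Since $\alpha < 0$, the exponent on the right becomes negative and in fact $-\Omega(k^2)$ once $k$ exceeds a constant $K_0 = K_0(\alpha)$. Splitting $\mathbb{E}[n^{4\omega_n(t)}] = \sum_k n^{4k}\, \Pr(\omega_n(t) = k)$ at $K_0$ therefore gives $\mathbb{E}[n^{4\omega_n(t)}] = O(n^{4K_0})$; the additional factor $\omega_n(t)^4$ is absorbed into the same polynomial order. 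Combining, for every $M' > 0$ we may choose $M$ large enough that $\mathbb{E}[Z_n(t)^2] = o(n^{-M'})$, and dividing by ${\rm{Var}}[\chi_n(t)]$ completes the argument.
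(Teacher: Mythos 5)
Your proposal is a valid, self-contained proof of the lemma. Note, though, that the paper itself does not prove Lemma~\ref{lem:AsymptoticDifferenceBettiNumberEulerCharacterisitc}: it is simply cited from the Kahle--Meckes erratum \cite{kahle2014erratum}, so there is no in-paper proof to compare against. That said, your argument is essentially the one the erratum must use (and the one its authors set up for, given that the vanishing theorem is phrased with $1-o(n^{-M})$ precision for every $M$): restrict to the high-probability event $A_n(t)$ where $\beta_{n,j}(t)=0$ for $j\neq k$, on which the difference $Z_n(t)$ vanishes, and then control the contribution from $A_n(t)^c$ via Cauchy--Schwarz. The non-trivial step you correctly identify as the crux is bounding $\mathbb{E}\big[\omega_n(t)^4\, n^{4\omega_n(t)}\big]$ by a polynomial in $n$: the naive deterministic bound $|Z_n(t)|\le\sum_j\binom{n}{j+1}\le 2^n$ would not be killed by $o(n^{-M})$, so passing through the clique number $\omega_n(t)$ is genuinely needed, and your first-moment Markov bound $\Pr(\omega_n(t)\ge r)\le\binom{n}{r}p^{\binom{r}{2}}$ does the job since the resulting exponent $r(5+\alpha(r-1)/2)$ turns strictly negative (and decreases at least linearly in $r$) once $r$ exceeds a threshold of order $k$. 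Two cosmetic remarks: (i) you reuse the letter $k$ both as the fixed Betti index and as the dummy summation variable in the Markov step, which should be disambiguated; and (ii) the constant $K_0$ you write as $K_0(\alpha)$ is equivalently $K_0(k)$ since $\alpha$ is pinned to the interval $(-1/k,-1/(k+1))$, and it is worth stating explicitly that for the tail sum one uses both $q(r)<0$ and $q(r+1)-q(r)\le -c<0$ for $r\ge K_0$, so that $\sum_{r>K_0} r^4 n^{q(r)}=O(1)$ uniformly in $n\ge 2$. With those small clarifications, the proof is complete and the final step — dividing $\mathbb{E}[Z_n(t)^2]=o(n^{-M'})$ by ${\rm{Var}}[\chi_n(t)]=\Theta\big(n^{2k}p^{2\binom{k+1}{2}-1}\big)$, which is of polynomial order — goes through exactly as you say.
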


\begin{lemma}[\cite{kahle2014erratum}]
\label{lem:AsymptoticVarianceBettiNumberEulerCharacteristicSame}
Fix $k \geq 1$. Let $p = n^\alpha$, $\alpha \in \left(-\tfrac{1}{k}, -\tfrac{1}{k + 1}\right)$. Then, for each $t \geq 0$,
\[
\lim_{n \rightarrow \infty}\frac{{\rm{Var}}[\beta_{n, k}(t)]}{{\rm{Var}}[\chi_{n}(t)]} = 1.
\]
\end{lemma}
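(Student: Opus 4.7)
The plan is to mirror, almost verbatim, the short algebraic argument already used to prove Lemma~\ref{lem:AsymptoticVarianceEulerCharacteristicCliqueCountSame}, simply substituting $(\beta_{n,k}(t), (-1)^k \chi_n(t))$ in place of $(\chi_n(t), (-1)^k f_{n,k}(t))$ there and using Lemma~\ref{lem:AsymptoticDifferenceBettiNumberEulerCharacterisitc} as the analogue of Lemma~\ref{lem:AsymptoticDifferenceCliqueCountEulerCharacterisitc}. The idea is that once we know, from Lemma~\ref{lem:AsymptoticDifferenceBettiNumberEulerCharacterisitc}, that $\beta_{n,k}(t)$ and $(-1)^k \chi_n(t)$ differ by a quantity whose variance is negligible compared to ${\rm{Var}}[\chi_n(t)]$, the ratio of variances must tend to $1$ by Cauchy--Schwarz.

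Concretely, the first step is the decomposition obtained by adding and subtracting $(-1)^k \chi_n(t)$:
\[
{\rm{Var}}[\beta_{n,k}(t)] = {\rm{Var}}[\chi_n(t)] + {\rm{Var}}\!\left[\beta_{n,k}(t) - (-1)^k \chi_n(t)\right] + 2\,{\rm{Cov}}\!\left[(-1)^k \chi_n(t),\; \beta_{n,k}(t) - (-1)^k \chi_n(t)\right],
\]
where I have used ${\rm{Var}}[(-1)^k \chi_n(t)] = {\rm{Var}}[\chi_n(t)]$. Dividing both sides by ${\rm{Var}}[\chi_n(t)]$ and estimating the covariance term by Cauchy--Schwarz gives
\[
\left| \frac{{\rm{Var}}[\beta_{n,k}(t)]}{{\rm{Var}}[\chi_n(t)]} - 1 \right| \; \leq \; \frac{{\rm{Var}}[\beta_{n,k}(t) - (-1)^k \chi_n(t)]}{{\rm{Var}}[\chi_n(t)]} + 2 \sqrt{\frac{{\rm{Var}}[\beta_{n,k}(t) - (-1)^k \chi_n(t)]}{{\rm{Var}}[\chi_n(t)]}}.
\]

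Finally, invoking Lemma~\ref{lem:AsymptoticDifferenceBettiNumberEulerCharacterisitc} shows that the ratio ${\rm{Var}}[\beta_{n,k}(t) - (-1)^k \chi_n(t)]/{\rm{Var}}[\chi_n(t)]$ tends to $0$ as $n \to \infty$, so both terms on the right-hand side vanish in the limit, yielding the claim.

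The only conceptual obstacle in this proof would be Lemma~\ref{lem:AsymptoticDifferenceBettiNumberEulerCharacterisitc} itself, which is the genuinely nontrivial input imported from \cite{kahle2014erratum}: it requires bounding ${\rm{Var}}[\beta_{n,j}(t)]$ for $j \neq k$ at the same rate as the other $f_{n,j}$ variances, not merely observing, via Theorem~\ref{thm:VanishingOfAllButOneHomology}, that $\beta_{n,j}(t)$ vanishes with high probability. Once that quantitative bound is in hand, the present lemma is a purely formal consequence, and the proof requires no further ideas beyond Cauchy--Schwarz.
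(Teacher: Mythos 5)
Your proof is correct, and it mirrors exactly the algebraic template the paper uses for Lemma~\ref{lem:AsymptoticVarianceEulerCharacteristicCliqueCountSame}. The variance decomposition obtained by adding and subtracting $(-1)^k\chi_n(t)$, the use of ${\rm{Var}}[(-1)^k\chi_n(t)] = {\rm{Var}}[\chi_n(t)]$, the Cauchy--Schwarz bound on the cross term, and the invocation of Lemma~\ref{lem:AsymptoticDifferenceBettiNumberEulerCharacterisitc} to send the ratio to zero are all sound. One small contextual remark: the paper itself does not supply a proof of this lemma; both Lemma~\ref{lem:AsymptoticDifferenceBettiNumberEulerCharacterisitc} and Lemma~\ref{lem:AsymptoticVarianceBettiNumberEulerCharacteristicSame} are cited directly from the Kahle--Meckes erratum \cite{kahle2014erratum} without derivation. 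So what you have written is a clean reconstruction of how the second cited statement follows formally from the first, precisely in the spirit of the paper's proof of Lemma~\ref{lem:AsymptoticVarianceEulerCharacteristicCliqueCountSame}. You are also right that the genuine analytic content is in Lemma~\ref{lem:AsymptoticDifferenceBettiNumberEulerCharacterisitc}: bounding the variance of $\beta_{n,k}(t) - (-1)^k\chi_n(t)$ requires quantitative control that Theorem~\ref{thm:VanishingOfAllButOneHomology} alone does not give, since a high-probability vanishing statement need not control second moments; that is exactly what Kahle and Meckes supply.
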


\section{Covariance}
\label{sec:Covariance}
In this section we investigate the
 covariance functions of the processes \CliqueBar, \! \EulerBar, and \BetaBar \! as $n \to \infty.$ We shall need these in Section~\ref{sec:FiniteDimensionalDistribution} to show that finite dimensional distributions of \BetaBar  \! converge to those of the stationary Ornstein-Uhlenbeck process.

\begin{lemma}
\label{lem:AsymptoticCovarianceCliqueCount}
Fix $k \geq 1$. Let $p = n^\alpha$, $\alpha \in \left(-\tfrac{1}{k}, -\tfrac{1}{k + 1}\right)$. Then, for any $t_1, t_2 \geq 0$,
\[
\underset{n \rightarrow \infty}{\lim}{\rm{Cov}}[\bar{f}_{n, k}(t_1), \bar{f}_{n, k}(t_2)] = e^{-\lambda |t_1 - t_2|}.
\]
\end{lemma}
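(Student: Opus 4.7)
The plan is to compute $\mathrm{Cov}[f_{n,k}(t_1),f_{n,k}(t_2)]$ directly by expanding the clique indicator representation \eqref{eqn:CliqueCount}, identify the leading term in its asymptotic expansion, and then divide by $\mathrm{Var}[f_{n,k}(t)]$ using the order estimate from Lemma~\ref{lem:VarianceCliqueCount}. Note that, by stationarity, $\mathrm{Var}[f_{n,k}(t_1)] = \mathrm{Var}[f_{n,k}(t_2)] = \mathrm{Var}[f_{n,k}(t)]$, so it suffices to show that $\mathrm{Cov}[f_{n,k}(t_1),f_{n,k}(t_2)] / \mathrm{Var}[f_{n,k}(t)] \to e^{-\lambda|t_1-t_2|}$.

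Write $d = |t_2 - t_1|$ and set $q(d) := p + (1-p)e^{-\lambda d}$, which by \eqref{eqn:EdgeTransition11} is the probability that an edge is on at time $t_2$ given it is on at time $t_1$. For two $(k+1)$-subsets $A_1, A_2 \in \binom{[n]}{k+1}$ sharing $i$ vertices, the indicators $1_{A_1}(t_1)$ and $1_{A_2}(t_2)$ depend on $\binom{i}{2}$ shared edges (each of which must be on at both times) and $2\binom{k+1}{2} - 2\binom{i}{2}$ edges used at only one time. Since distinct edges evolve independently, this gives
\[
\mathbb{E}[1_{A_1}(t_1)1_{A_2}(t_2)] = (p\,q(d))^{\binom{i}{2}} \cdot p^{2\binom{k+1}{2} - 2\binom{i}{2}}.
\]
Subtracting $\mathbb{E}[1_{A_1}(t_1)]\mathbb{E}[1_{A_2}(t_2)] = p^{2\binom{k+1}{2}}$ and collecting the shared vertex count gives, by the same combinatorial grouping as in the derivation of \eqref{eqn:VarianceCliqueCountSimplifiedExp},
\[
\mathrm{Cov}[f_{n,k}(t_1),f_{n,k}(t_2)] = \sum_{i=2}^{k+1} \binom{n}{k+1}\binom{k+1}{i}\binom{n-k-1}{k+1-i}\, p^{2\binom{k+1}{2}-\binom{i}{2}} \left[q(d)^{\binom{i}{2}} - p^{\binom{i}{2}}\right].
\]
(The $i=0,1$ terms drop out because $\binom{i}{2} = 0$ makes the bracket vanish, matching the obvious fact that disjoint or vertex-sharing but edge-disjoint cliques give independent indicators.) Note that setting $d=0$ gives $q(0) = 1$ and recovers exactly the expression in \eqref{eqn:VarianceCliqueCountSimplifiedExp} for $\mathrm{Var}[f_{n,k}(t)]$.

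The leading term is $i=2$, for which $\binom{i}{2} = 1$, contributing
\[
\binom{n}{k+1}\binom{k+1}{2}\binom{n-k-1}{k-1}\, p^{2\binom{k+1}{2}-1}\,[q(d)-p] = \Theta\bigl(n^{2k} p^{2\binom{k+1}{2}-1}\bigr)\cdot (1-p)e^{-\lambda d},
\]
where we used $q(d) - p = (1-p)e^{-\lambda d}$. For $i \geq 3$, I would bound $|q(d)^{\binom{i}{2}} - p^{\binom{i}{2}}| \leq 1$ and verify, by exactly the convexity argument for the exponent $\zeta(i) = 2k+2-i + \alpha[2\binom{k+1}{2} - \binom{i}{2}]$ used in the proofs of Lemmas~\ref{lem:VarianceOfAllCliqueCounts} and~\ref{lem:VarianceCliqueCount}, that these contributions are $o\bigl(n^{2k} p^{2\binom{k+1}{2}-1}\bigr)$; this is just the observation that the $i=2$ term strictly dominates in the regime $\alpha \in (-1/k, -1/(k+1))$.

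Dividing by $\mathrm{Var}[f_{n,k}(t)]$, whose dominant term is the same $i=2$ contribution with $q(d)$ replaced by $1$, the prefactors $\binom{n}{k+1}\binom{k+1}{2}\binom{n-k-1}{k-1}p^{2\binom{k+1}{2}-1}$ cancel, and the subdominant $i\geq 3$ terms vanish in the ratio. Since $p = n^\alpha \to 0$, the factor $(1-p)e^{-\lambda d}$ tends to $e^{-\lambda d}$, yielding the claimed limit $e^{-\lambda|t_1-t_2|}$. The only subtle step is the $i\geq 3$ bookkeeping, but this is a direct replay of the convexity argument already done for the variance, so I do not anticipate any real obstacle.
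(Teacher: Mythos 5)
Your proposal is correct and follows essentially the same strategy as the paper: expand the covariance over shared-vertex-count classes $i$, observe that the $i=2$ term is the common dominant term in both covariance and variance (with the ratio of their $i=2$ brackets being exactly $(q(d)-p)/(1-p)=e^{-\lambda d}$), and show the $i\geq 3$ contributions are subleading via the same convexity/power-counting used for the variance lemmas. The paper packages this slightly differently — it first writes the normalized covariance exactly, factors out $L=e^{-\lambda|t_1-t_2|}$ to get $L(1+\mathscr{Z}_{n,k})$, and then shows $\mathscr{Z}_{n,k}\to 0$ by comparing the largest powers of $n$ in numerator and denominator — but the underlying computation and the crucial observation (that the exponent drop from $i=2$ to $i=3$ is $1+2\alpha>0$, which requires $\alpha>-1/2$, guaranteed by $\alpha>-1/k$ with $k\geq 2$) are identical to yours. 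One cosmetic remark: your final sentence invoking $(1-p)\to 1$ is redundant, since the $(1-p)$ factors cancel exactly between the $i=2$ terms of covariance and variance.
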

\begin{proof}
Fix arbitrary $t_1, t_2 \geq 0$, and define $L = e^{-\lambda |t_1 - t_2|}$. Using \eqref{eqn:CliqueCount}, note that
\begin{eqnarray*}
\mathbb{E} [f_{n, k}(t_1)  f_{n, k}(t_2)] & = & \sum_{A_1 \in \tbinom{[n]}{k + 1}} \sum_{A_2 \in \tbinom{[n]}{k + 1}} \mathbb{E}[1_{A_1}(t_1) 1_{A_2}(t_2)]\\
& = & \tbinom{n}{k + 1} \sum_{A_2 \in \tbinom{[n]}{k + 1}} \mathbb{E}[1_{A_1}(t_1) 1_{A_2}(t_2)],
\end{eqnarray*}
where in the second equality $A_1$ is an arbitrary, but  fixed, $k$-face. Rewriting the above  in terms of the number of  vertices common to $A_1$ and $ A_2,$ applying\eqref{eqn:EdgeTransition11}, \eqref{eqn:EdgeProbability} gives
\[
\mathbb{E} [f_{n, k}(t_1)  f_{n, k}(t_2)] =   \tbinom{n}{k + 1} \sum_{i = 0}^{k + 1} \tbinom{k + 1}{i} \tbinom{n - k - 1}{k + 1 - i} \tfrac{p^{2 \binom{k + 1}{2}}}{p^{\binom{i}{2}}} [p + (1 - p) L]^{\binom{i}{2}}.
\]
Combining this with \eqref{eqn:MeanCliqueCount} and \eqref{eqn:VarianceCliqueCount}, it is easy to see that
\[
{\rm{Cov}}[\bar{f}_{n, k}(t_1), \bar{f}_{n, k}(t_2)] = \frac{\sum\limits_{i = 0}^{k + 1} \binom{k + 1}{i} \binom{n - k - 1}{k + 1 - i} [1 + \frac{1 - p}{p}L]^{\binom{i}{2}} - \binom{n}{k + 1}}{ \sum\limits_{i = 0}^{k + 1}\binom{k + 1}{i} \binom{n - k - 1}{k + 1 - i} p^{- \binom{i}{2}} - \binom{n}{k + 1}}.
\]
Now using the fact that $\binom{n}{k + 1} = \sum_{i = 0}^{k + 1}\binom{k + 1}{i}\binom{n - k - 1}{k + 1 - i}$, we have
\[
{\rm{Cov}}[\bar{f}_{n, k}(t_1), \bar{f}_{n, k}(t_2)] = \frac{\sum\limits_{i = 2}^{k + 1} \binom{k + 1}{i} \binom{n - k - 1}{k + 1 - i}[(1 + \frac{1 - p}{p}L)^{\binom{i}{2}} - 1]}{\sum\limits_{i = 2}^{k + 1} \binom{k + 1}{i} \binom{n - k - 1}{k + 1 - i}[\frac{1 - p^{\binom{i}{2}}}{p^{\binom{i}{2}}}]}.
\]
By expanding terms inside the square brackets and cancelling out $\frac{1 - p}{p}$, we have that
\[
{\rm{Cov}}[\bar{f}_{n, k}(t_1), \bar{f}_{n, k}(t_2)] = L \frac{\sum\limits_{i = 2}^{k + 1}\binom{k + 1}{i} \binom{n - k - 1}{k + 1 - i}\left[\sum\limits_{j = 1}^{\binom{i}{2}}c_{ij} \left(\frac{1 - p}{p} L \right)^{j - 1}\right]}{\sum\limits_{i = 2}^{k + 1}\binom{k + 1}{i} \binom{n - k - 1}{k + 1 - i}\left[\sum\limits_{j = 1}^{\binom{i}{2}} \left(\frac{1}{p}\right)^{j - 1}\right]},
\]
where $c_{ij} = \binom{\binom{i}{2}}{j}$. Now observe that the term corresponding to $i = 2$ inside the summation in both the numerator as well as denominator is the same. Hence,
\[
{\rm{Cov}}[\bar{f}_{n, k}(t_1), \bar{f}_{n, k}(t_2)] = L + \frac{L\sum\limits_{i = 3}^{k + 1}\binom{k + 1}{i} \binom{n - k - 1}{k + 1 - i}\sum\limits_{j = 1}^{\binom{i}{2}}\left[\frac{c_{ij} \left((1 - p) L \right)^{j - 1} - 1}{p^{j - 1}}\right]}{\sum\limits_{i = 2}^{k + 1}\binom{k + 1}{i} \binom{n - k - 1}{k + 1 - i}\left[\sum\limits_{j = 1}^{\binom{i}{2}} \left(\frac{1}{p}\right)^{j - 1}\right]} := L (1 + \mathscr{Z}_{n, k}).
\]

To prove the desired result, it suffices to show that $\mathscr{Z}_{n, k} \rightarrow 0$ as $n \rightarrow \infty$. If $k = 1, $ then $\mathscr{Z}_{n, k} = 0$ for each $n$ and hence $\lim_{n \rightarrow \infty} \mathscr{Z}_{n, k} = 0$ trivially. Suppose that $k \geq 2$. Observe that expansion of the term inside the inner sum of the numerator of $\mathscr{Z}_{n, k}$ will result in a linear combination of $1, 1/p, \ldots, 1/p^{j - 1}$. Hence, by multiplying the numerator and denominator of $\mathscr{Z}_{n, k}$ by $p^{\binom{k + 1}{2} - 1}$, one  can rewrite $\mathscr{Z}_{n, k}$ as
\[
\mathscr{Z}_{n, k} =  \frac{\sum\limits_{i = 3}^{k + 1}  \; \sum\limits_{j = 1}^{\binom{i}{2}} \omega_{ij} \binom{n - k - 1}{k + 1 - i} p^{\binom{k + 1}{2} - j}}{\sum\limits_{i = 2}^{k + 1}\; \sum\limits_{j = 1}^{\binom{i}{2}} \xi_{ij} \binom{n - k - 1}{k + 1 - i} p^{\binom{k + 1}{2} - j}}
\]
for some real constants $\{\omega_{ij}\}$ and $\{\xi_{ij}\}$. Since $\binom{n - k - 1}{k + 1 - i} = \Theta( n^{k + 1 - i})$, it follows that to show $\lim_{n \rightarrow \infty} \mathscr{Z}_{n, k} = 0$ one only needs to show that $\lim_{n \rightarrow \infty} \mathscr{Z}^{\prime}_{n, k} = 0$, where
\[
\mathscr{Z}^\prime_{n, k} :=  \frac{\sum\limits_{i = 3}^{k + 1} \; \sum\limits_{j = 1}^{\binom{i}{2}} \tilde{\omega}_{ij} \; n^{k + 1 - i} p^{\binom{k + 1}{2} - j}}{\sum\limits_{i = 2}^{k + 1} \; \sum\limits_{j = 1}^{\binom{i}{2}} \tilde{\xi}_{ij } \; n^{k + 1 - i} p^{\binom{k + 1}{2} - j}}
\]
with $\{\tilde{\omega}_{ij}\}$ and $\{\tilde{\xi}_{ij}\}$ being additional sets of real constants. Since $p = n^\alpha$, the power of $n$ in the summand of numerator as well as denominator of $\mathscr{Z}^\prime_{n, k}$ is of the form
\[
k + 1 - i + \alpha \left[\tbinom{k + 1}{2} - j \right].
\]
Since  $\alpha < 0$, we have
\begin{equation}
\label{eqn:InnerSumMaximum}
\underset{1 \leq j \leq \tbinom{i}{2}}{\arg \max}\left(k + 1 - i + \alpha \left[\tbinom{k + 1}{2} - j\right]\right) = \tbinom{i}{2}.
\end{equation}
Further, the restriction that $\alpha > -1/k$ implies that, for each $i \leq k$,
\begin{equation}
\label{eqn:OuterSumMaximum}
k + 1 - i + \alpha \left[\tbinom{k + 1}{2} - \tbinom{i}{2} \right] \geq k + 1 - (i + 1) + \alpha \left[\tbinom{k + 1}{2} - \tbinom{i + 1}{2} \right].
\end{equation}
From \eqref{eqn:InnerSumMaximum} and \eqref{eqn:OuterSumMaximum}, it follows that the largest power of $n$ in the numerator of $\mathscr{Z}^\prime_{n, k}$ is
\begin{equation}
\label{eqn:NumLargestPower}
k + 1 - 3 + \alpha \left[\tbinom{k + 1}{2} - \tbinom{3}{2} \right],
\end{equation}
while, in the denominator, it is
\begin{equation}
\label{eqn:DenomLargestPower}
k + 1 - 2 + \alpha \left[\tbinom{k + 1}{2} - \tbinom{2}{2} \right].
\end{equation}
Since $k \geq 2$ and hence $\alpha \geq -1/2$, it follows that the term in \eqref{eqn:DenomLargestPower} is larger than that in \eqref{eqn:NumLargestPower}. This shows that $\lim_{n \rightarrow \infty} \mathscr{Z}^\prime_{n, k} = 0$ as desired, and so  completes the proof.
\end{proof}

\begin{lemma}
\label{lem:AsymptoticCovarianceEulerCharacteristic}
Fix $k \geq 1$. Let $p = n^\alpha$, $\alpha \in \left(-\tfrac{1}{k}, -\tfrac{1}{k + 1}\right)$. Then, for any $t_1, t_2 \geq 0$,
\[
\underset{n \rightarrow \infty}{\lim}{\rm{Cov}}[\bar{\chi}_{n}(t_1), \bar{\chi}_{n}(t_2)] = e^{-\lambda |t_1 - t_2|}.
\]
\end{lemma}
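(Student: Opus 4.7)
The plan is to reduce this statement to Lemma~\ref{lem:AsymptoticCovarianceCliqueCount} by showing that, up to lower order fluctuations, the normalized Euler characteristic \EulerBar{} behaves like the normalized clique count \CliqueBar. The key decomposition is
\[
\chi_{n}(t) \;=\; (-1)^{k} f_{n,k}(t) + R_n(t), \qquad R_n(t) := \chi_n(t) - (-1)^k f_{n,k}(t),
\]
which comes straight from \eqref{eqn:defnEulerCharacteristicCliqueCount}. By stationarity, ${\rm{Var}}[R_n(t)]$ does not depend on $t$, and Lemma~\ref{lem:AsymptoticDifferenceCliqueCountEulerCharacterisitc} tells us that ${\rm{Var}}[R_n(t)] = o({\rm{Var}}[f_{n,k}(t)])$ as $n\to\infty$.

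Next, I would centre and expand by bilinearity of covariance:
\begin{align*}
{\rm{Cov}}[\chi_n(t_1),\chi_n(t_2)]
&= {\rm{Cov}}[f_{n,k}(t_1), f_{n,k}(t_2)] + (-1)^k {\rm{Cov}}[f_{n,k}(t_1), R_n(t_2)] \\
&\quad + (-1)^k {\rm{Cov}}[R_n(t_1), f_{n,k}(t_2)] + {\rm{Cov}}[R_n(t_1), R_n(t_2)].
\end{align*}
Applying Cauchy--Schwarz to each of the last three terms, together with Lemma~\ref{lem:AsymptoticDifferenceCliqueCountEulerCharacterisitc}, each such term is bounded in absolute value by $\sqrt{{\rm{Var}}[f_{n,k}(t)]\cdot {\rm{Var}}[R_n(t)]} + {\rm{Var}}[R_n(t)] = o({\rm{Var}}[f_{n,k}(t)])$.

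Dividing through by $\sqrt{{\rm{Var}}[\chi_n(t_1)]\,{\rm{Var}}[\chi_n(t_2)]}$ and using Lemma~\ref{lem:AsymptoticVarianceEulerCharacteristicCliqueCountSame} to replace the denominator by ${\rm{Var}}[f_{n,k}(t)](1+o(1))$, one obtains
\[
{\rm{Cov}}[\bar\chi_n(t_1),\bar\chi_n(t_2)] \;=\; \frac{{\rm{Cov}}[f_{n,k}(t_1), f_{n,k}(t_2)]}{\sqrt{{\rm{Var}}[f_{n,k}(t_1)]\,{\rm{Var}}[f_{n,k}(t_2)]}}\,(1+o(1)) + o(1),
\]
where the first quotient is ${\rm{Cov}}[\bar f_{n,k}(t_1), \bar f_{n,k}(t_2)]$. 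Invoking Lemma~\ref{lem:AsymptoticCovarianceCliqueCount} then yields the limit $e^{-\lambda|t_1-t_2|}$, as required.

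There is no real obstacle in this proof; it is essentially a bookkeeping argument that leverages three facts already established: (i) the asymptotic covariance of $\bar f_{n,k}$ (Lemma~\ref{lem:AsymptoticCovarianceCliqueCount}), (ii) negligibility of the ``Euler minus clique'' correction in variance (Lemma~\ref{lem:AsymptoticDifferenceCliqueCountEulerCharacterisitc}), and (iii) asymptotic equivalence of ${\rm{Var}}[\chi_n(t)]$ and ${\rm{Var}}[f_{n,k}(t)]$ (Lemma~\ref{lem:AsymptoticVarianceEulerCharacteristicCliqueCountSame}). The one mild care needed is in handling the cross covariance terms by Cauchy--Schwarz rather than attempting to compute them directly; this sidesteps having to revisit the combinatorial sums involving ${\rm{Cov}}[f_{n,i}, f_{n,j}]$ for $i,j\neq k$ that would otherwise arise.
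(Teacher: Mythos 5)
Your proposal is correct and follows essentially the same route as the paper: both decompose around $f_{n,k}$, show the cross and remainder terms are negligible via Lemma~\ref{lem:AsymptoticDifferenceCliqueCountEulerCharacterisitc} and Cauchy--Schwarz, and pass from the $\chi_n$-normalization to the $f_{n,k}$-normalization using Lemma~\ref{lem:AsymptoticVarianceEulerCharacteristicCliqueCountSame} before applying Lemma~\ref{lem:AsymptoticCovarianceCliqueCount}. The only cosmetic difference is whether one normalizes before or after expanding the covariance by bilinearity.
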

\begin{proof}
We need to show that
\[
\lim_{n \rightarrow \infty} \frac{{\rm{Cov}}[\chi_{n}(t_1), \chi_{n}(t_2)]}{\sqrt{{\rm{Var}}[\chi_{n}(t_1)] {\rm{Var}}[\chi_{n}(t_2)]}} = e^{-\lambda|t_1 - t_2|}.
\]
However, since Lemma~\ref{lem:AsymptoticVarianceEulerCharacteristicCliqueCountSame} holds, it suffices to show that
\[
\lim_{n \rightarrow \infty} \frac{{\rm{Cov}}[\chi_{n}(t_1), \chi_{n}(t_2)]}{\sqrt{{\rm{Var}}[f_{n, k}(t_1)]{\rm{Var}}[f_{n, k}(t_2)]}} = e^{-\lambda|t_1 - t_2|}.
\]
But the term inside limit on the left hand side equals
\begin{multline*}
\frac{{\rm{Cov}}[f_{n, k}(t_1), f_{n, k}(t_2)]}{\sqrt{{\rm{Var}}[f_{n, k}(t_1)] {\rm{Var}}[f_{n, k}(t_2)]}} + \frac{{\rm{Cov}}[(-1)^k\chi_{n}(t_1)  - f_{n, k}(t_1), f_{n, k}(t_2)]}{\sqrt{{\rm{Var}}[f_{n, k}(t_1)] {\rm{Var}}[f_{n, k}(t_2)]}} \\ +  \; \frac{{\rm{Cov}}[f_{n, k}(t_1), (-1)^k\chi_{n}(t_2)  - f_{n, k}(t_2)]}{\sqrt{{\rm{Var}}[f_{n, k}(t_1)] {\rm{Var}}[f_{n, k}(t_2)]}} \\ + \; \frac{{\rm{Cov}}[(-1)^k\chi_{n}(t_1)  - f_{n, k}(t_1), (-1)^k\chi_{n}(t_2)  - f_{n, k}(t_2)]}{\sqrt{{\rm{Var}}[f_{n, k}(t_1)]{\rm{Var}}[f_{n, k}(t_2)]}}.
\end{multline*}
Lemma~\ref{lem:AsymptoticCovarianceCliqueCount} shows that the first term converges to $e^{-\lambda|t_1 - t_2|}.$ The remaining terms go to $0$ due to Lemma~\ref{lem:AsymptoticDifferenceCliqueCountEulerCharacterisitc} and the Cauchy-Schwarz inequality and we are done.
\end{proof}

In the above proof, by replacing $f_{n, k}(t_i)$ with $\chi_{n}(t_i)$ and $\chi_{n}(t_i)$ with $\beta_{n, k}(t_i)$ and using Lemmas~\ref{lem:AsymptoticVarianceBettiNumberEulerCharacteristicSame}, \ref{lem:AsymptoticCovarianceEulerCharacteristic}, and \ref{lem:AsymptoticDifferenceBettiNumberEulerCharacterisitc}, appropriately, the following result is easy to prove.

\begin{theorem}
\label{thm:AsymptoticCovarianceBettiNumbers}
Fix $k \geq 1$. Let $p = n^\alpha$, $\alpha \in \left(-\tfrac{1}{k}, -\tfrac{1}{k + 1}\right)$. Then, for any $t_1, t_2 \geq 0$,
\[
\underset{n \rightarrow \infty}{\lim}{\rm{Cov}}[\bar{\beta}_{n, k}(t_1), \bar{\beta}_{n, k}(t_2)] = e^{-\lambda |t_1 - t_2|}.
\]
\end{theorem}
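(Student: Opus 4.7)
The plan is to repeat, one level up, the argument already used in the proof of Lemma~\ref{lem:AsymptoticCovarianceEulerCharacteristic}, where $\chi_{n}$ was compared to $f_{n,k}$. Here we compare $\beta_{n,k}$ with $(-1)^k \chi_{n}$ instead. The three inputs I will use are Lemma~\ref{lem:AsymptoticVarianceBettiNumberEulerCharacteristicSame} (to swap ${\rm{Var}}[\beta_{n,k}]$ for ${\rm{Var}}[\chi_{n}]$ in the denominator), Lemma~\ref{lem:AsymptoticCovarianceEulerCharacteristic} (which identifies the limiting covariance of $\bar{\chi}_{n}$), and Lemma~\ref{lem:AsymptoticDifferenceBettiNumberEulerCharacterisitc} (to control the discrepancy $\beta_{n,k} - (-1)^k \chi_{n}$).

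First, since ${\rm{Cov}}[\bar{\beta}_{n,k}(t_1), \bar{\beta}_{n,k}(t_2)]$ is the correlation of $\beta_{n,k}(t_1)$ and $\beta_{n,k}(t_2)$, Lemma~\ref{lem:AsymptoticVarianceBettiNumberEulerCharacteristicSame} reduces the claim to showing
\[
\lim_{n \to \infty} \frac{{\rm{Cov}}[\beta_{n,k}(t_1), \beta_{n,k}(t_2)]}{\sqrt{{\rm{Var}}[\chi_{n}(t_1)]\,{\rm{Var}}[\chi_{n}(t_2)]}} \;=\; e^{-\lambda|t_1 - t_2|}.
\]

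Next I would write $\beta_{n,k}(t_i) = (-1)^k \chi_{n}(t_i) + R_i$, where $R_i := \beta_{n,k}(t_i) - (-1)^k \chi_{n}(t_i)$, and use bilinearity of the covariance (together with $(-1)^{2k}=1$) to decompose the numerator above as
\[
{\rm{Cov}}[\chi_{n}(t_1),\chi_{n}(t_2)] + (-1)^k{\rm{Cov}}[\chi_{n}(t_1), R_2] + (-1)^k{\rm{Cov}}[R_1, \chi_{n}(t_2)] + {\rm{Cov}}[R_1, R_2].
\]
After dividing by $\sqrt{{\rm{Var}}[\chi_{n}(t_1)]\,{\rm{Var}}[\chi_{n}(t_2)]}$, Lemma~\ref{lem:AsymptoticCovarianceEulerCharacteristic} gives that the first term tends to $e^{-\lambda|t_1-t_2|}$. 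The three remaining terms are handled by the Cauchy--Schwarz inequality; for example,
\[
\frac{|{\rm{Cov}}[\chi_{n}(t_1), R_2]|}{\sqrt{{\rm{Var}}[\chi_{n}(t_1)]\,{\rm{Var}}[\chi_{n}(t_2)]}} \;\leq\; \sqrt{\frac{{\rm{Var}}[R_2]}{{\rm{Var}}[\chi_{n}(t_2)]}},
\]
which tends to $0$ by Lemma~\ref{lem:AsymptoticDifferenceBettiNumberEulerCharacterisitc}. The term ${\rm{Cov}}[R_1, R_2]/\sqrt{{\rm{Var}}[\chi_{n}(t_1)]\,{\rm{Var}}[\chi_{n}(t_2)]}$ is bounded by $\sqrt{{\rm{Var}}[R_1]\,{\rm{Var}}[R_2]/({\rm{Var}}[\chi_{n}(t_1)]\,{\rm{Var}}[\chi_{n}(t_2)])}$ and vanishes for the same reason.

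There is no real obstacle here: all the heavy lifting (the variance asymptotics for clique counts in Section~\ref{sec:PreliminaryResults}, and the Kahle--Meckes estimates quoted as Lemmas~\ref{lem:AsymptoticDifferenceBettiNumberEulerCharacterisitc} and \ref{lem:AsymptoticVarianceBettiNumberEulerCharacteristicSame}) has already been done. The only points to watch are the signs introduced by the factors of $(-1)^k$ and the careful bookkeeping of which covariance-to-variance ratio tends to $0$ at each step, exactly as in the proof of Lemma~\ref{lem:AsymptoticCovarianceEulerCharacteristic}.
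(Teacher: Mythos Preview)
Your proposal is correct and follows exactly the approach the paper takes: the paper's proof of Theorem~\ref{thm:AsymptoticCovarianceBettiNumbers} is simply the observation that the argument for Lemma~\ref{lem:AsymptoticCovarianceEulerCharacteristic} can be repeated verbatim with $f_{n,k}(t_i)$ replaced by $\chi_n(t_i)$ and $\chi_n(t_i)$ replaced by $\beta_{n,k}(t_i)$, invoking Lemmas~\ref{lem:AsymptoticVarianceBettiNumberEulerCharacteristicSame}, \ref{lem:AsymptoticCovarianceEulerCharacteristic}, and \ref{lem:AsymptoticDifferenceBettiNumberEulerCharacterisitc} in place of their counterparts. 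Your write-up spells out precisely this substitution, including the bilinearity decomposition and the Cauchy--Schwarz bounds on the cross terms.
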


\section{Convergence of Finite Dimensional Distributions}
\label{sec:FiniteDimensionalDistribution}
We now turn to the convergence of the
 finite dimensional distributions of the processes $\bar\beta_{n,k}$, which we establish by  first proving similar results for    $\bar f_{n,k}$ and $\bar\chi_{n,k}$.

For random variables $X, Y$,  write $X \overset{d}{=} Y$ to indicate equivalence in distribution.

\begin{lemma}
\label{lem:AsymptoticMultivariateGaussianCliqueCount}
Fix $k \geq 1$. Let $p = n^\alpha$, $\alpha \in \left(-\tfrac{1}{k}, -\tfrac{1}{k + 1}\right)$. Then, for any $m \in \mathbb{N}$ and any $t_1, \ldots, t_m \geq 0,$ as $n \rightarrow \infty$,
\[
(\bar{f}_{n, k}(t_1), \ldots, \bar{f}_{n, k}(t_m)) \Rightarrow (\mathcal{U}_{\lambda}(t_1), \ldots, \mathcal{U}_{\lambda}(t_m)).
\]
\end{lemma}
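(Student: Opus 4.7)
My plan is to use the Cramér–Wold device together with the dissociated central limit theorem (Theorem~\ref{thm:WassersteinMetricUB}). Fix $m \in \mathbb{N}$, times $t_1, \ldots, t_m \geq 0$, and any $(a_1, \ldots, a_m) \in \mathbb{R}^m$. It suffices to prove that
\[
S_n := \sum_{i=1}^m a_i \bar{f}_{n,k}(t_i) \ \Rightarrow\ \sum_{i=1}^m a_i \mathcal{U}_\lambda(t_i),
\]
since the right-hand side is centered Gaussian with variance $\sigma^2 := \sum_{i,j} a_i a_j e^{-\lambda |t_i - t_j|}$. By bilinearity and Lemma~\ref{lem:AsymptoticCovarianceCliqueCount}, $\sigma_n^2 := {\rm{Var}}[S_n] \to \sigma^2$. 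If $\sigma^2 = 0$, then $S_n \to 0$ in $L^2$ and we are done; otherwise it is enough to show that $W_n := S_n/\sigma_n$ converges weakly to $\mathcal{N}(0,1)$.

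The key observation is that $W_n$ is naturally a sum of dissociated random variables. Indeed, writing $v_{n,i} := \sqrt{{\rm{Var}}[f_{n,k}(t_i)]}$ and using \eqref{eqn:CliqueCount},
\[
W_n \ = \ \sum_{A \in \binom{[n]}{k+1}} Y_A, \qquad Y_A := \frac{1}{\sigma_n}\sum_{i=1}^m \frac{a_i}{v_{n,i}}\bigl(1_A(t_i) - p^{\binom{k+1}{2}}\bigr).
\]
Two cliques $A$ and $B$ with disjoint vertex sets involve disjoint edge sets, which evolve as independent Markov chains; hence $\{Y_A\}$ is dissociated with dependency neighbourhood $\mathcal{Y}(A) = \{B \in \binom{[n]}{k+1} : B \cap A \neq \emptyset\}$ of size $O(n^k)$, and $\mathbb{E}[Y_A] = 0$, ${\rm{Var}}[W_n] = 1$ by construction. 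Applying Theorem~\ref{thm:WassersteinMetricUB} then reduces the problem to showing
\[
\sum_{A} \sum_{B, C \in \mathcal{Y}(A)} \Bigl( \mathbb{E}[|Y_A Y_B Y_C|] + \mathbb{E}[|Y_A Y_B|]\, \mathbb{E}[|Y_C|] \Bigr) \ \longrightarrow\ 0.
\]

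For each triple I would estimate $\mathbb{E}[|1_{A_1}(t_{i_1}) - p^{\binom{k+1}{2}}| \cdots |1_{A_r}(t_{i_r}) - p^{\binom{k+1}{2}}|]$, $r \in \{1,2,3\}$, by expanding and using the crude monotonicity bound $\mathbb{E}[\prod 1_{A_\ell}(t_{i_\ell})] \leq \mathbb{E}[\prod 1_{A_\ell}(t^{*})]$ where $t^{*}$ is any fixed time (since simultaneous existence at one time is the rarest event among the possible couplings via \eqref{eqn:EdgeTransition11}), which equals $p^{e(A_1 \cup \cdots \cup A_r)}$, the number of edges in the union. Stratifying the sums according to the intersection pattern $(|A \cap B|, |A \cap C|, |B \cap C|, |A \cap B \cap C|)$, each stratum contributes a count $O(n^{k+1+\cdot})$ times a product of small edge-probabilities divided by $\sigma_n^{r} \cdot \prod v_{n,i_\ell}$. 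Using Lemma~\ref{lem:VarianceCliqueCount}, the normalisation $v_{n,i}$ is of order $n^{k} p^{\binom{k+1}{2}-1/2}$, and one can verify, exactly as in the static proof of \cite{kahle2013limit}, that the constraint $\alpha > -1/k$ makes every stratum's contribution vanish as $n \to \infty$. This combinatorial bookkeeping—essentially verifying that the static arguments transfer under the stationary Markov dynamics via the monotonicity of joint existence probabilities in time separation—is the main technical obstacle, but the reduction above isolates it cleanly.

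Once the Wasserstein bound tends to zero, $W_n \Rightarrow \mathcal{N}(0,1)$, hence $S_n \Rightarrow \mathcal{N}(0,\sigma^2)$, and Cramér–Wold delivers the joint convergence claimed in the lemma.
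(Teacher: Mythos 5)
Your overall strategy mirrors the paper's: Cram\'er--Wold device, the Barbour--Karo\'nski--Ruci\'nski dissociated CLT (Theorem~\ref{thm:WassersteinMetricUB}), and the time-monotonicity bound $\mathbb{E}[1_{A_1}(t_{i})1_{A_2}(t_{j})1_{A_3}(t_{\ell})] \leq \mathbb{E}[1_{A_1}(t)1_{A_2}(t)1_{A_3}(t)]$ (which follows from $p + (1-p)e^{-\lambda\tau} \leq 1$) to reduce the Wasserstein remainder to the static quantity already controlled in \cite{kahle2013limit}. However, there is a concrete gap in how you apply Theorem~\ref{thm:WassersteinMetricUB}: you index the dissociated family by vertex sets, obtaining the dependency neighbourhood $\mathcal{Y}(A) = \{B : B\cap A \neq \emptyset\}$ of size $\Theta(n^k)$. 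This neighbourhood is too large for the bound to close. The triples $(A,B,C)$ with pairwise vertex overlaps exactly one are included in your sum, number $\Theta(n^{3k+1})$, and for these $Y_A, Y_B, Y_C$ are mutually independent (disjoint edge sets), so each contributes $\mathbb{E}|Y_A|\mathbb{E}|Y_B|\mathbb{E}|Y_C| = \Theta\bigl((p^{1/2}/n^k)^3\bigr)$. The total from these triples alone is of order $n p^{3/2} = n^{1 + 3\alpha/2}$, which diverges for all $\alpha \in (-1/k, -1/(k+1))$ when $k\geq 2$, and for $\alpha \in (-2/3, -1/2)$ when $k=1$. The paper avoids this by indexing each clique $A$ by its $\binom{k+1}{2}$ \emph{edges} (see the discussion above (3.5) in \cite{barbour1989central}), so that the dependency neighbourhood is $\aleph(A) = \{B : |A \cap B| \geq 2\}$, of size only $\Theta(n^{k-1})$; this is the version that matches the static quantity $\mathscr{R}'_{n,k}$ from \cite{kahle2013limit} and actually tends to zero. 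You need to switch to edge-based indexing, or equivalently observe and exploit that $Y_A \perp Y_B$ already when $|A \cap B| \leq 1$.

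A smaller point: your parenthetical ``since simultaneous existence at one time is the rarest event'' has the wrong sign of intuition --- the static probability is the \emph{largest} among the time-separated couplings, which is exactly why the inequality goes the way you wrote it; the inequality itself is correct.
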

\begin{proof}
Fix $m \in \mathbb{N}$, arbitrary $t_1, \ldots, t_m \geq 0$, and arbitrary $\omega_1, \ldots, \omega_m \in \mathbb{R}$. Due to the Cram\'er-Wold theorem \cite[Theorem 29.4]{billingsley2008probability}, it suffices to show that, as $n \rightarrow \infty$,
\begin{equation}
\label{eqn:SufficientResultAsymptoticConvergenceMultivariateGaussian1}
\omega_1 \bar{f}_{n, k}(t_1) + \cdots + \omega_m \bar{f}_{n, k}(t_m) \Rightarrow \omega_1\mathcal{U}_{\lambda}(t_1) + \cdots + \omega_m\mathcal{U}_{\lambda}(t_m).
\end{equation}
But as $\{\mathcal{U}_{\lambda}(t) : t \geq 0\}$ is Gaussian with $\mathbb{E}[\mathcal{U}_{\lambda}(t)] \equiv 0$ and ${\rm{Cov}}[\mathcal{U}_{\lambda}(t_i), \mathcal{U}_{\lambda}(t_j)] = e^{-|t_i - t_j|},$
\[
\frac{\omega_1\mathcal{U}_{\lambda}(t_1) + \cdots + \omega_m\mathcal{U}_{\lambda}(t_m)}{\sqrt{\omega_1^2 + \cdots + \omega_m^2 + 2 \sum_{i < j}\omega_i \omega_j e^{-|t_i - t_j|}}} \overset{d}{=} \mathcal{N}(0,1).
\]
Further, Lemma~\ref{lem:AsymptoticCovarianceCliqueCount} shows that
\begin{equation}
\label{eqn:AsymptoticVarianceCliqueCount}
\underset{n \rightarrow \infty}{\lim} \frac{\sqrt{{\rm{Var}}[\sum_{i = 1}^m \omega_i \bar{f}_{n,k}(t_i)]}}{\sqrt{\omega_1^2 + \cdots + \omega_m^2 + 2 \sum_{i < j}\omega_i \omega_j e^{-|t_i - t_j|}}} = 1.
\end{equation}
Hence, it follows that to prove \eqref{eqn:SufficientResultAsymptoticConvergenceMultivariateGaussian1} we only need show that, as $n \rightarrow \infty$,
\begin{equation}
\label{eqn:SufficientResultAsymptoticConvergenceMultivariateGaussian2}
\mathscr{W}_{n, k } := \frac{\omega_1 \bar{f}_{n, k}(t_1) + \cdots + \omega_m \bar{f}_{n, k}(t_m)}{\sqrt{{\rm{Var}}[\sum_{i = 1}^m \omega_i \bar{f}_{n,k}(t_i)]}} \Rightarrow \mathcal{N}(0,1).
\end{equation}

From \eqref{eqn:ScaledShiftedCliqueCountProcess} and \eqref{eqn:CliqueCount}, we have
\[
\mathscr{W}_{n, k} = \frac{\sum_{A \in \binom{[n]}{k + 1}} \left[ \sum_{i = 1}^{m}  \; \omega_i \bar{1}_{A}(t_i) \right]}{\sqrt{{\rm{Var}}[\sum_{i = 1}^m \omega_i \bar{f}_{n,k}(t_i)]}},
\]
where $\bar{1}_A(t_i) = \left( \frac{1_A(t_i) - \mathbb{E}[1_A(t_i)]}{\sqrt{{\rm{Var}}[f_{n, k}(t_i)]}}\right).$ Indexing the random variable $\left[ \sum_{i = 1}^{m}  \; \omega_i \bar{1}_{A}(t_i) \right]$ with the $\binom{k + 1}{2}$ edges in $A,$ it is easy to see that $\left\{\frac{ \left[ \sum_{i = 1}^{m}  \; \omega_i \bar{1}_{A}(t_i) \right]}{\sqrt{{\rm{Var}}[\sum_{i = 1}^m \omega_i \bar{f}_{n,k}(t_i)]}} : A \in \binom{[n]}{k + 1}\right\} $ is a dissociated set of random variables. For any $A_1 \in \binom{[n]}{k + 1},$ its dependency neighbourhood $\cY(A_1) = \{A_2 \in \binom{[n]}{k + 1} : a_{12} \geq 2\}.$ Here $a_{12}$ denotes the number of vertices common to $A_1$ and $A_2.$ For details, see the discussion above (3.5) in \cite{barbour1989central}.

Let $S_{n, k , m}$ be the cartesian product $\binom{[n]}{k + 1} \times [m].$ For $(A_1 , i )\in S_{n, k, m}$, let $\aleph(A_1, i) = \cY(A_1) \times [m].$ Since $\mathbb{E}[\omega_i \bar{1}_A(t_i)] = 0$ and $\mathbb{E}[\mathscr{W}_{n, k}^2] = 1,$  Theorem~\ref{thm:WassersteinMetricUB} yields that
\begin{multline*}
d_1(\mathscr{W}_{n, k}, \mathcal{N}(0,1))  \leq \frac{\rho \omega^3}{\left({\rm{Var}}[\sum_{i = 1}^m \omega_i \bar{f}_{n,k}(t_i)]\right)^{3/2}}\times \\
\sum_{(A_1, i) \in S_{n, k, m}}
\sum_{
\substack{\text{\tiny $(A_2, j), (A_3, \ell)$} \\
\text{\tiny{$\in \aleph(A_1, i)$}}}} \bigg[ \mathbb{E}\big[|\bar{1}_{A_1}(t_i) \bar{1}_{A_2}(t_j) \bar{1}_{A_3}(t_\ell)|\big]  + \mathbb{E}\big[|\bar{1}_{A_1}(t_i) \bar{1}_{A_2}(t_j)|\big] \; \mathbb{E}\big[|\bar{1}_{A_3}(t_\ell)|\big] \bigg],
\end{multline*}
where $\omega = \max_{i \in [m]} |\omega_i|$.
Since
\begin{multline*}
\mathbb{E}\big[|\bar{1}_{A_1}(t_i) \bar{1}_{A_2}(t_j) \bar{1}_{A_3}(t_\ell)|\big] +  \mathbb{E}\big[|\bar{1}_{A_1}(t_i) \bar{1}_{A_2}(t_j)|\big] \; \mathbb{E}\big[|\bar{1}_{A_3}(t_\ell)|\big] \\
\leq  \dfrac{16 \; \mathbb{E}\big[1_{A_1}(t_i) 1_{A_2}(t_j) 1_{A_3}(t_\ell)\big]}{\sqrt{{\rm{Var}}[f_{n, k}(t_i)] {\rm{Var}}[f_{n, k}(t_j)] {\rm{Var}}[f_{n, k}(t_\ell)]}},
\end{multline*}
\begin{multline*}
d_1(\mathscr{W}_{n, k}, \mathcal{N}(0,1))\\  \leq 16 \rho \omega^3 \frac{\sum\limits_{ (A_1, i) \in S_{n, k, m}}
\; \sum\limits_{
\substack{\text{\tiny $(A_2, j), (A_3, \ell)$} \\
\text{\tiny{$\in \aleph(A_1, i)$}}} }
\mathbb{E}\big[1_{A_1}(t_i) 1_{A_2}(t_j) 1_{A_3}(t_\ell)\big]}{\left({\rm{Var}}[\sum_{i = 1}^m \omega_i \bar{f}_{n,k}(t_i)]\right)^{3/2} {\sqrt{{\rm{Var}}[f_{n, k}(t_i)] {\rm{Var}}[f_{n, k}(t_j)] {\rm{Var}}[f_{n, k}(t_\ell)]}}}.
\end{multline*}
Combining this with \eqref{eqn:AsymptoticVarianceCliqueCount}, and defining
\begin{equation*}
\mathscr{R}_{n, k}\ :=\  \frac{\sum\limits_{ (A_1, i) \in S_{n, k, m}}
\; \sum\limits_{
\substack{\text{\tiny $(A_2, j), (A_3, \ell)$} \\
\text{\tiny{$\in \aleph(A_1, i)$}}} }
\mathbb{E}\big[1_{A_1}(t_i) 1_{A_2}(t_j) 1_{A_3}(t_\ell)\big]}{\sqrt{{\rm{Var}}[f_{n, k}(t_i)] {\rm{Var}}[f_{n, k}(t_j)] {\rm{Var}}[f_{n, k}(t_\ell)]}},
\end{equation*}
it follows that to establish \eqref{eqn:SufficientResultAsymptoticConvergenceMultivariateGaussian2} we  need only  show that $\lim_{n \rightarrow \infty} \mathscr{R}_{n, k} = 0.$ Fix arbitrary $t \geq 0$ and let $\aleph(A_1)  \equiv \aleph_{n, k}(A_1) := \{A_2 \in \tbinom{[n]}{k + 1} : a_{12} \geq 2\}$ and
\[
\mathscr{R}^\prime_{n, k} := \frac{\sum\limits_{\text{\tiny  $A_1 \in \tbinom{[n]}{k + 1}$}}
\; \sum\limits_{\text{\tiny $A_2, A_3 \in \aleph(A_1)$}} \\
\mathbb{E}\big[1_{A_1}(t) 1_{A_2}(t) 1_{A_3}(t)\big]}{\left({\rm{Var}}[f_{n, k}(t)]\right)^{3/2}}.
\]
In \cite{kahle2013limit}, as part of proof of Claim 2.5 (ii), it was shown that $\lim_{n \rightarrow \infty} \mathscr{R}^\prime_{n, k} = 0$. In the remaining part of this proof, we shall show that
\begin{equation}
\label{eqn:SufficientResultAsymptoticConvergenceMultivariateGaussian4}
\mathscr{R}_{n, k} \leq m^3 \mathscr{R}^\prime_{n, k}.
\end{equation}
This is clearly sufficient to establish $\lim_{n \rightarrow \infty} \mathscr{R}_{n, k} = 0.$

Recall from \eqref{eqn:VarianceCliqueCount} that ${\rm{Var}}[f_{n, k}(t)]$ is independent of $t$. Hence, it follows that the denominators in $\mathscr{R}_{n, k}$ and $\mathscr{R}^\prime_{n, k}$ are identical.  Now using \eqref{eqn:EdgeTransition11} and \eqref{eqn:EdgeProbability} and the fact that $p + (1 - p) e^{-\tau} \leq  1$ for any $\tau \geq 0$, observe that
\begin{eqnarray*}
\mathbb{E}\big[1_{A_1}(t_i) 1_{A_2}(t_j) 1_{A_3}(t_\ell)\big] & \leq &  p^{3\binom{k + 1}{2} - \binom{a_{12}}{2} - \binom{a_{13}}{2} -\binom{a_{23}}{2} + \binom{a_{123}}{2}}\\
& = & \mathbb{E}\big[1_{A_1}(t) 1_{A_2}(t) 1_{A}(t)\big].
\end{eqnarray*}
From this and the definition of $\mathscr{R}_{n, k}$, \eqref{eqn:SufficientResultAsymptoticConvergenceMultivariateGaussian4} follows easily. Desired result thus follows.
\end{proof}

\begin{lemma}
\label{lem:AsymptoticMultivariateGaussianEulerCharacteristic}
Fix $k \geq 1$. Let $p = n^\alpha$, $\alpha \in \left(-\tfrac{1}{k}, -\tfrac{1}{k + 1}\right)$. Then, for any $m \in \mathbb{N}$ and any $t_1, \ldots, t_m \geq 0$, as $n \rightarrow \infty$,
\[
(\bar{\chi}_{n}(t_1), \ldots, \bar{\chi}_{n}(t_m)) \Rightarrow (\mathcal{U}_{\lambda}(t_1), \ldots, \mathcal{U}_{\lambda}(t_m)).
\]
\end{lemma}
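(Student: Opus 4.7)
My plan is to reduce the result to the previously established Lemma~\ref{lem:AsymptoticMultivariateGaussianCliqueCount} by showing that, in an appropriate sense, $\bar{\chi}_{n}$ is asymptotically the same as $(-1)^k \bar{f}_{n,k}$. This is possible because the dominant term in $\chi_{n}$ (up to a sign) is $f_{n,k}$, a fact already encoded in Lemmas~\ref{lem:AsymptoticDifferenceCliqueCountEulerCharacterisitc} and \ref{lem:AsymptoticVarianceEulerCharacteristicCliqueCountSame}.

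Concretely, I would start by writing $(-1)^k \chi_{n}(t) = f_{n,k}(t) + D_{n}(t)$, where $D_{n}(t) := (-1)^k\chi_{n}(t) - f_{n,k}(t)$. Centering and dividing by $\sqrt{{\rm Var}[\chi_{n}(t)]}$ gives the identity
\[
(-1)^k \bar{\chi}_{n}(t) \ = \ \bar{f}_{n,k}(t)\,\sqrt{\tfrac{{\rm Var}[f_{n,k}(t)]}{{\rm Var}[\chi_{n}(t)]}} \; + \; R_{n}(t),
\]
where $R_{n}(t) := (-1)^k(D_{n}(t) - \mathbb{E}[D_{n}(t)])/\sqrt{{\rm Var}[\chi_{n}(t)]}$. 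By Lemma~\ref{lem:AsymptoticVarianceEulerCharacteristicCliqueCountSame}, the square-root factor converges to $1$. Also, $\mathbb{E}[R_{n}(t)]=0$ and, by combining Lemmas~\ref{lem:AsymptoticDifferenceCliqueCountEulerCharacterisitc} and \ref{lem:AsymptoticVarianceEulerCharacteristicCliqueCountSame},
\[
{\rm Var}[R_{n}(t)] \ = \ \tfrac{{\rm Var}[D_{n}(t)]}{{\rm Var}[f_{n,k}(t)]}\cdot \tfrac{{\rm Var}[f_{n,k}(t)]}{{\rm Var}[\chi_{n}(t)]} \ \longrightarrow \ 0,
\]
so $R_{n}(t) \to 0$ in $L^{2}$, and hence in probability, for each fixed $t \ge 0$.

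Combining these observations at each of the finitely many times $t_{1},\ldots,t_{m}$ and using that $\bar{f}_{n,k}(t_{i})$ is tight (in fact marginally asymptotically standard normal by Lemma~\ref{lem:AsymptoticMultivariateGaussianCliqueCount}), I get
\[
(-1)^k\bigl(\bar{\chi}_{n}(t_{1}), \ldots, \bar{\chi}_{n}(t_{m})\bigr) \; - \; \bigl(\bar{f}_{n,k}(t_{1}), \ldots, \bar{f}_{n,k}(t_{m})\bigr) \ \xrightarrow{\Pr} \ 0.
\]
An application of Slutsky's theorem, together with Lemma~\ref{lem:AsymptoticMultivariateGaussianCliqueCount}, then yields
\[
(-1)^k\bigl(\bar{\chi}_{n}(t_{1}), \ldots, \bar{\chi}_{n}(t_{m})\bigr) \ \Rightarrow \ (\mathcal{U}_{\lambda}(t_{1}), \ldots, \mathcal{U}_{\lambda}(t_{m})).
\]
Since the Ornstein-Uhlenbeck vector is zero-mean Gaussian, it is symmetric about the origin, so multiplying by $(-1)^k$ leaves the distribution invariant; this gives the desired conclusion.

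The argument is essentially a Slutsky-type reduction, so there is no genuine obstacle beyond bookkeeping. The one place to be careful is the sign: writing the identity as $(-1)^k\chi_{n}=f_{n,k}+D_{n}$ (rather than the reverse) keeps $f_{n,k}$ on the "right" side and makes the $(-1)^k$ factor appear only at the final step, where the symmetry of $\mathcal{U}_{\lambda}$ absorbs it cleanly.
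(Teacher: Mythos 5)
Your proposal is correct and takes essentially the same route as the paper: both show, via Lemmas~\ref{lem:AsymptoticDifferenceCliqueCountEulerCharacterisitc} and \ref{lem:AsymptoticVarianceEulerCharacteristicCliqueCountSame}, that $(-1)^k\bar\chi_n(t)-\bar f_{n,k}(t)$ is asymptotically negligible in $L^2$, then apply Slutsky together with Lemma~\ref{lem:AsymptoticMultivariateGaussianCliqueCount}. The only cosmetic difference is that the paper passes through Cram\'er--Wold linear combinations of the $\bar\chi_n(t_i)$, while you argue componentwise on the vector difference and handle the residual $(-1)^k$ sign at the end via the symmetry of the Gaussian limit; these are equivalent.
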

\begin{proof}
As in the proof of Lemma~\ref{lem:AsymptoticMultivariateGaussianCliqueCount}, it suffices to show that, as $n \rightarrow \infty$,
\[
\frac{\omega_1 \bar{\chi}_n(t_1) + \cdots + \omega_m \bar{\chi}_n(t_m)}{\sqrt{{\rm{Var}}[\sum_{i = 1}^{m}\omega_i \bar{\chi}_{n}(t_i)]}} \Rightarrow \mathcal{N}(0,1)
\]
for any $\omega_1, \ldots, \omega_m \in \mathbb{R}.$ Since Lemmas~\ref{lem:AsymptoticCovarianceCliqueCount} and \ref{lem:AsymptoticCovarianceEulerCharacteristic} hold, it in fact suffices to show that
\begin{equation}
\label{eqn:MultivariateEC}
\frac{\omega_1 \bar{\chi}_n(t_1) + \cdots + \omega_m \bar{\chi}_n(t_m)}{\sqrt{{\rm{Var}}[\sum_{i = 1}^{m}\omega_i \bar{f}_{n, k}(t_i)]}} \Rightarrow \mathcal{N}(0,1).
\end{equation}
From Lemmas~\ref{lem:AsymptoticDifferenceCliqueCountEulerCharacterisitc} and \ref{lem:AsymptoticVarianceEulerCharacteristicCliqueCountSame}, and the Cauchy-Schwarz inequality, note that, for all $i$,
\begin{eqnarray*}
 {\rm{Var}}[(-1)^k \bar{\chi}_n(t_i) - \bar{f}_{n, k}(t_i)]
&=& {\rm{Var}}\left[(-1)^k \bar{\chi}_n(t_i) - \frac{(-1)^k\chi_n(t_i) - \mathbb{E}[(-1)^k\chi_n(t_i)]}{\sqrt{{\rm{Var}}[f_{n,k}(t_i)]}}  \right. \\
&  & \qquad  \left. + \frac{(-1)^k\chi_n(t_i) - \mathbb{E}[(-1)^k\chi_n(t_i)]} {\sqrt{{\rm{Var}}[f_{n,k}(t_i)]}} - \bar{f}_{n, k}(t_i)\right] \\
& & \leq  \left(\sqrt{{\rm{Var}}[\chi_n(t_i)]}\left|\frac{1}{\sqrt{{\rm{Var}}[\chi_n(t_i)]}} - \frac{1}{\sqrt{{\rm{Var}}[f_{n,k}(t_i)]}}\right|\right. \\
& &\qquad\ + \; \left.\sqrt{\frac{{\rm{Var}}[(-1)^k\chi_n(t_i) - f_{n,k}(t_i)]}{{\rm{Var}}[f_{n,k}(t_i)]}} \right)^2 \\
&&  \to  0,
\end{eqnarray*}
as $n \to \infty$. Using the above estimate and the Cauchy-Schwarz inequality, we find
\beqq
&&{\rm{Var}}\left[\frac{\sum_{i = 1}^{m} \omega_i [(-1)^k \bar{\chi}_n(t_i) - \bar{f}_{n, k}(t_i)]}{\sqrt{{\rm{Var}}[\sum_{i = 1}^{m}\omega_i \bar{f}_{n, k}(t_i)]}}\right]
\\ && \qquad\qquad
 \leq  \left(\sum_{i=1}^m |\omega_i|\sqrt{\frac{{\rm{Var}}[[(-1)^k \bar{\chi}_n(t_i) - \bar{f}_{n, k}(t_i)]]}{{\rm{Var}}[\sum_{i = 1}^{m}\omega_i \bar{f}_{n, k}(t_i)]}}\right)^2
\quad  \to \ \  0,
\eeqq
as $n \to \infty$. From this, it follows that $\left[\frac{\sum_{i = 1}^{m} \omega_i [(-1)^k \bar{\chi}_n(t_i) - \bar{f}_{n, k}(t_i)]}{\sqrt{{\rm{Var}}[\sum_{i = 1}^{m}\omega_i \bar{f}_{n, k}(t_i)]}}\right]$ converges to $0$ in probability. Since Lemma~\ref{lem:AsymptoticMultivariateGaussianCliqueCount} holds and
\[
\frac{\sum_{i = 1}^{m}\omega_i \bar{\chi}_n(t_i)}{\sqrt{{\rm{Var}}[\sum_{i = 1}^{m}\omega_i \bar{f}_{n, k}(t_i)]}} = \frac{\sum_{i = 1}^{m} \omega_i [(-1)^k \bar{\chi}_n(t_i) - \bar{f}_{n, k}(t_i)]}{\sqrt{{\rm{Var}}[\sum_{i = 1}^{m}\omega_i \bar{f}_{n, k}(t_i)]}} + \frac{\sum_{i = 1}^{m}\omega_i \bar{f}_{n, k}(t_i)}{\sqrt{{\rm{Var}}[\sum_{i = 1}^{m}\omega_i \bar{f}_{n, k}(t_i)]}},
\]
\eqref{eqn:MultivariateEC} follows via Slutsky's theorem \cite[Chapter 6, Theorem 6.5]{gut2009intermediate} and so does the claim.
\end{proof}

\begin{theorem}
\label{thm:AsymptoticMultivariateGaussianBettiNumber}
Fix $k \geq 1$. Let $p = n^\alpha$, $\alpha \in \left(-\tfrac{1}{k}, -\tfrac{1}{k + 1}\right)$. Then, for any $m \in \mathbb{N}$ and any $t_1, \ldots, t_m$, as $n \rightarrow \infty$,
\[
(\bar{\beta}_{n, k}(t_1), \ldots, \bar{\beta}_{n, k}(t_m)) \Rightarrow (\mathcal{U}_{\lambda}(t_1), \ldots, \mathcal{U}_{\lambda}(t_m)).
\]
\end{theorem}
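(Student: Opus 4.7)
The plan is to follow verbatim the structure of the proof of Lemma~\ref{lem:AsymptoticMultivariateGaussianEulerCharacteristic}, with the pair $(f_{n,k}, \chi_n)$ replaced throughout by $(\chi_n, \beta_{n,k})$. The substitutions are transparent: the roles played in that argument by Lemmas~\ref{lem:AsymptoticDifferenceCliqueCountEulerCharacterisitc}, \ref{lem:AsymptoticVarianceEulerCharacteristicCliqueCountSame}, \ref{lem:AsymptoticCovarianceCliqueCount} and \ref{lem:AsymptoticMultivariateGaussianCliqueCount} are now taken over by Lemmas~\ref{lem:AsymptoticDifferenceBettiNumberEulerCharacterisitc}, \ref{lem:AsymptoticVarianceBettiNumberEulerCharacteristicSame}, Theorem~\ref{thm:AsymptoticCovarianceBettiNumbers} (equivalently, combined with Lemma~\ref{lem:AsymptoticCovarianceEulerCharacteristic}) and Lemma~\ref{lem:AsymptoticMultivariateGaussianEulerCharacteristic} itself, respectively.

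In detail, by the Cram\'er-Wold theorem the claim reduces to showing that, for arbitrary $\omega_1,\dots,\omega_m \in \mathbb{R}$,
\[
\sum_{i=1}^m \omega_i \bar\beta_{n,k}(t_i) \ \Rightarrow \ \sum_{i=1}^m \omega_i \mathcal{U}_\lambda(t_i) \quad\text{as } n\to\infty.
\]
Since $(-1)^k \mathcal{U}_\lambda \overset{d}{=} \mathcal{U}_\lambda$, applying the continuous mapping theorem to Lemma~\ref{lem:AsymptoticMultivariateGaussianEulerCharacteristic} already gives
\[
(-1)^k \sum_{i=1}^m \omega_i \bar\chi_n(t_i) \ \Rightarrow \ \sum_{i=1}^m \omega_i \mathcal{U}_\lambda(t_i).
\]
Hence, by Slutsky's theorem, it suffices to show that
\[
\mathscr{D}_n \ := \ \sum_{i=1}^m \omega_i \bigl[\bar\beta_{n,k}(t_i) - (-1)^k \bar\chi_n(t_i)\bigr] \ \stackrel{\mathbb{P}}{\to}\ 0.
\]
Since $\mathbb{E}[\mathscr{D}_n]=0$, this is implied by $\mathrm{Var}[\mathscr{D}_n] \to 0$, and the Cauchy-Schwarz inequality further reduces it to verifying that $\mathrm{Var}[\bar\beta_{n,k}(t_i) - (-1)^k \bar\chi_n(t_i)] \to 0$ for each $i$.

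For this last step, mimicking the displayed calculation in the proof of Lemma~\ref{lem:AsymptoticMultivariateGaussianEulerCharacteristic}, decompose
\[
\bar\beta_{n,k}(t_i) - (-1)^k \bar\chi_n(t_i) \ = \ \left[\bar\beta_{n,k}(t_i) - \frac{\beta_{n,k}(t_i) - \mathbb{E}[\beta_{n,k}(t_i)]}{\sqrt{\mathrm{Var}[\chi_n(t_i)]}}\right] + \left[\frac{\beta_{n,k}(t_i) - \mathbb{E}[\beta_{n,k}(t_i)]}{\sqrt{\mathrm{Var}[\chi_n(t_i)]}} - (-1)^k \bar\chi_n(t_i)\right].
\]
The variance of the first bracket simplifies to $\bigl(1 - \sqrt{\mathrm{Var}[\beta_{n,k}(t_i)]/\mathrm{Var}[\chi_n(t_i)]}\bigr)^2$, which vanishes by Lemma~\ref{lem:AsymptoticVarianceBettiNumberEulerCharacteristicSame}. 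The variance of the second bracket is exactly $\mathrm{Var}[\beta_{n,k}(t_i) - (-1)^k \chi_n(t_i)]/\mathrm{Var}[\chi_n(t_i)]$, which vanishes by Lemma~\ref{lem:AsymptoticDifferenceBettiNumberEulerCharacterisitc}. The $L^2$ triangle inequality then yields $\mathrm{Var}[\bar\beta_{n,k}(t_i) - (-1)^k \bar\chi_n(t_i)] \to 0$, completing the argument. There is no substantively new obstacle: the proof is a mechanical transcription of Lemma~\ref{lem:AsymptoticMultivariateGaussianEulerCharacteristic}, since all the genuinely analytic work (the Stein--Chen bound for dissociated indicators and the variance comparisons) has already been carried out upstream.
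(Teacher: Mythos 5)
Your proposal is correct and takes essentially the same approach as the paper: the paper's proof of Theorem~\ref{thm:AsymptoticMultivariateGaussianBettiNumber} is a terse pointer to the argument of Lemma~\ref{lem:AsymptoticMultivariateGaussianEulerCharacteristic} with the identical substitution of Lemmas~\ref{lem:AsymptoticDifferenceBettiNumberEulerCharacterisitc} and \ref{lem:AsymptoticVarianceBettiNumberEulerCharacteristicSame} for their $f_{n,k}$--$\chi_n$ counterparts, which you have correctly reconstructed (Cram\'er--Wold, then Slutsky via an $L^2$ estimate on $\bar\beta_{n,k}(t_i)-(-1)^k\bar\chi_n(t_i)$). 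The only cosmetic difference is that you apply Slutsky to the unnormalised linear combination rather than routing through the $\mathcal{N}(0,1)$ normalisation, which is harmless.
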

\begin{proof}
The arguments are similar to those used in the proof of Lemma~\ref{lem:AsymptoticMultivariateGaussianEulerCharacteristic}. Firstly, using  Lemma~\ref{lem:AsymptoticDifferenceBettiNumberEulerCharacterisitc}, it follows that for any $\omega_1, \ldots, \omega_m \in \mathbb{R}$,
\[
\lim_{n \rightarrow \infty}{\rm{Var}}\left[\frac{\sum_{i = 1}^{m} \omega_i [(-1)^k \bar{\chi}_n(t_i) - \bar{\beta}_{n, k}(t_i)]}{\sqrt{{\rm{Var}}[\sum_{i = 1}^{m}\omega_i \bar{\chi}_{n, k}(t_i)]}}\right] = 0.
\]
Then using Lemma~\ref{lem:AsymptoticCovarianceEulerCharacteristic} and Theorem~\ref{thm:AsymptoticCovarianceBettiNumbers}, the desired result follows.
\end{proof}

\section{Tightness}
\label{sec:Tightness}

In this section, we show that, for each $k$,  the sequences $\{\hat\beta_{n,k} \: n \geq 1\}$ are tight. By Theorem~\ref{thm:SufficientConditionsConvergenceOrnsteinUhlenbeckProcess}, it suffices to establish the two conditions \ref{cond: InitialDrift} and \ref{cond: Tightness} for these sequences.

\begin{lemma}
\label{lem:InitialDriftBettiNumbers}
Fix $k \geq 1$. Let $p = n^\alpha$, $\alpha \in \left(-\tfrac{1}{k}, -\tfrac{1}{k + 1}\right)$. Then, for the sequence $\{ \hat\beta_{n,k}\:n \geq 1\}$, condition \ref{cond: InitialDrift} holds  with $\Upsilon = 2$, i.e.,
\[
\lim_{\delta \rightarrow 0} \limsup_{n \rightarrow \infty} \mathbb{E}[\bar{\beta}_{n, k}(\delta) - \bar{\beta}_{n, k}(0)]^2 = 0.
\]
\end{lemma}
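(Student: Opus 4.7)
The plan is to reduce the second moment of the increment to a covariance computation that has already been done in Section~\ref{sec:Covariance}. Since $\bar{\beta}_{n,k}(t)$ is defined in \eqref{equn:barbeta} so as to have mean zero and unit variance for every $t$, one can simply expand
\[
\mathbb{E}\bigl[\bar{\beta}_{n,k}(\delta) - \bar{\beta}_{n,k}(0)\bigr]^2 \;=\; \mathrm{Var}[\bar{\beta}_{n,k}(\delta)] + \mathrm{Var}[\bar{\beta}_{n,k}(0)] - 2\,\mathrm{Cov}[\bar{\beta}_{n,k}(\delta), \bar{\beta}_{n,k}(0)],
\]
and the first two terms on the right are identically equal to $1$ by the normalisation in \eqref{equn:barbeta}. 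Hence
\[
\mathbb{E}\bigl[\bar{\beta}_{n,k}(\delta) - \bar{\beta}_{n,k}(0)\bigr]^2 \;=\; 2\bigl(1 - \mathrm{Cov}[\bar{\beta}_{n,k}(\delta), \bar{\beta}_{n,k}(0)]\bigr).
\]

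Next I would invoke Theorem~\ref{thm:AsymptoticCovarianceBettiNumbers}, which asserts that $\mathrm{Cov}[\bar{\beta}_{n,k}(\delta), \bar{\beta}_{n,k}(0)] \to e^{-\lambda \delta}$ as $n \to \infty$. Because the identity above is actually an equality (and a convergent numerical limit, not just a bound), the $\limsup$ in $n$ is attained as a genuine limit, giving
\[
\limsup_{n \to \infty} \mathbb{E}\bigl[\bar{\beta}_{n,k}(\delta) - \bar{\beta}_{n,k}(0)\bigr]^2 \;=\; 2\bigl(1 - e^{-\lambda \delta}\bigr).
\]

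Finally, letting $\delta \downarrow 0$ drives $2(1 - e^{-\lambda \delta}) \to 0$ by continuity of the exponential, which is exactly condition \ref{cond: InitialDrift} with $\Upsilon = 2$. There is no real obstacle here: all the heavy lifting — comparing $\bar{\beta}_{n,k}$ to $\bar{\chi}_n$ to $\bar{f}_{n,k}$, and computing the asymptotic covariance through Lemmas~\ref{lem:AsymptoticCovarianceCliqueCount}, \ref{lem:AsymptoticCovarianceEulerCharacteristic} and Theorem~\ref{thm:AsymptoticCovarianceBettiNumbers} — has already been carried out. The lemma is therefore essentially a corollary of the asymptotic covariance formula together with the unit-variance normalisation of $\bar{\beta}_{n,k}$.
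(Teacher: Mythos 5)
Your proposal is correct and matches the paper's proof exactly: both expand the second moment of the increment as $2 - 2\,\mathrm{Cov}[\bar{\beta}_{n,k}(\delta),\bar{\beta}_{n,k}(0)]$ using the unit-variance normalisation, apply Theorem~\ref{thm:AsymptoticCovarianceBettiNumbers} to pass to the $n\to\infty$ limit, and then let $\delta \downarrow 0$. (The paper writes $e^{-\delta}$ where it should be $e^{-\lambda\delta}$; your version is the correct one, though it changes nothing.)
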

\begin{proof}
From Theorem~\ref{thm:AsymptoticCovarianceBettiNumbers} and the fact that
\[
\mathbb{E}[\bar{\beta}_{n, k}(\delta) - \bar{\beta}_{n, k}(0)]^2 = 2 - 2 {\rm{Cov}}[\bar{\beta}_{n, k}(\delta),\bar{\beta}_{n, k}(0)],
\]
\[
\lim_{n \rightarrow \infty} \mathbb{E}[\bar{\beta}_{n, k}(\delta) - \bar{\beta}_{n, k}(0)]^2 = 2 - 2 e^{-\delta}.
\]
and the result follows easily.
\end{proof}

Arguing as above, it follows that \ref{cond: InitialDrift} is also satisfied for the sequence of $\{\hat f_{n,k} : n \geq 1\}$ \! and $\{\hat\chi_{n,k}\: n \geq 1\}$. We now aim to show that \ref{cond: Tightness} holds true for $\{\text{\BetaBar} : n \geq 1\}$. Our approach is to first establish this result for $\text{\CliqueBar} $, then for $\text{\EulerBar}$, and
finally for $\text{\BetaBar} $.

\begin{lemma}
\label{lem:TightnessCliqueCount}
Fix $k \geq 1$. Let $p = n^\alpha$, $\alpha \in \left(-\tfrac{1}{k}, -\tfrac{1}{k + 1}\right)$. Then, for the sequence $\{\text{\CliqueBar} : n \geq 1\}$, condition \ref{cond: Tightness} holds with $\Upsilon_1 = \Upsilon_2 = 2$. That is, for any $T > 0$, there exists $K_f > 0$ such that, for all $n \geq 1$, $0 \leq t \leq T + 1$ and $0 \leq h \leq t$,
\[
\mathbb{E}\left[\bar{f}_{n, k}(t + h) - \bar{f}_{n, k}(t)\right]^2 \left[\bar{f}_{n, k}(t) - \bar{f}_{n, k}(t - h)\right]^2 \leq K_f h^2.
\]
\end{lemma}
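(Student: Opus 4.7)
Let me set $A := \bar{f}_{n,k}(t+h) - \bar{f}_{n,k}(t)$ and $B := \bar{f}_{n,k}(t) - \bar{f}_{n,k}(t-h)$, and write $\tilde{X}_A := 1_A(t+h) - 1_A(t)$ and $\tilde{Y}_A := 1_A(t) - 1_A(t-h)$ (both mean zero, by stationarity and \eqref{eqn:EdgeProbability}). Then
\[
\mathbb{E}[A^2 B^2] \;=\; \frac{1}{{\rm Var}[f_{n,k}(t)]^2}\sum_{A_1, A_2, A_3, A_4 \in \binom{[n]}{k+1}} \mathbb{E}\bigl[\tilde{X}_{A_1}\tilde{X}_{A_2}\tilde{Y}_{A_3}\tilde{Y}_{A_4}\bigr].
\]
The plan is to expand this sum and bound it by $K_f h^2$ by exploiting the Markov property of the dynamic \ER graph, which decouples the two time intervals $[t, t+h]$ and $[t-h, t]$. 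Conditioning on $G_t$, the $\tilde{X}$'s (functions of $G_{t+h}$) are independent of the $\tilde{Y}$'s (functions of $G_{t-h}$), giving
\[
\mathbb{E}\bigl[\tilde{X}_{A_1}\tilde{X}_{A_2}\tilde{Y}_{A_3}\tilde{Y}_{A_4}\bigr] \;=\; \mathbb{E}\bigl[\mathbb{E}[\tilde{X}_{A_1}\tilde{X}_{A_2}\mid G_t]\,\mathbb{E}[\tilde{Y}_{A_3}\tilde{Y}_{A_4}\mid G_t]\bigr].
\]
In addition, if some $A_i$ shares no edges with the other three cliques, the corresponding $\tilde{X}_{A_i}$ (or $\tilde{Y}_{A_i}$) is independent of the remaining product, and the mean-zero property forces the term to vanish. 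It therefore suffices to sum over quadruples in which every clique is edge-connected to at least one other.

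The dominant contribution comes from the \emph{separated} quadruples, in which the edge sets $E(A_1) \cup E(A_2)$ and $E(A_3) \cup E(A_4)$ are disjoint. For these, the $\tilde{X}$'s and $\tilde{Y}$'s are even unconditionally independent, so the expectation factorises as $\mathbb{E}[\tilde{X}_{A_1}\tilde{X}_{A_2}]\,\mathbb{E}[\tilde{Y}_{A_3}\tilde{Y}_{A_4}]$. A direct calculation along the lines of the proof of Lemma~\ref{lem:AsymptoticCovarianceCliqueCount}, using \eqref{eqn:EdgeTransition11}--\eqref{eqn:EdgeTransition00} and the bound $1 - e^{-\lambda h} \leq \lambda h$, gives, for $A_1, A_2$ sharing $i$ vertices,
\[
\mathbb{E}[\tilde{X}_{A_1}\tilde{X}_{A_2}] \;=\; 2 p^{2m - \binom{i}{2}}\!\left[1 - (p + (1-p)e^{-\lambda h})^{\binom{i}{2}}\right] \;\leq\; C_1(k, \lambda)\, p^{2m - \binom{i}{2}}\, h,
\]
where $m := \binom{k+1}{2}$, the expression vanishing for $i \leq 1$. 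Summing this bound over $(A_1, A_2)$ exactly as in the proofs of Lemmas~\ref{lem:VarianceOfAllCliqueCounts}--\ref{lem:VarianceCliqueCount} yields ${\rm Var}[f_{n,k}(t+h) - f_{n,k}(t)] = \sum_{A_1, A_2} \mathbb{E}[\tilde{X}_{A_1}\tilde{X}_{A_2}] \leq C_2\, h\, {\rm Var}[f_{n,k}(t)]$. By stationarity the same bound applies to the $\tilde{Y}$-pair sum, so the total separated contribution to $\mathbb{E}[A^2 B^2]$ is at most $C_2^2 h^2$ after dividing by ${\rm Var}[f_{n,k}(t)]^2$.

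For the remaining \emph{coupled} quadruples, in which some edge is shared between the $X$- and $Y$-groups, I lose the unconditional factorisation, but each conditional expectation still obeys the uniform bound $|\mathbb{E}[\tilde{X}_{A_1}\tilde{X}_{A_2} \mid G_t]| \leq m\lambda h$ (and similarly for $\tilde{Y}$), producing the crucial factor $h^2$. The price is the appropriate $p$-power reflecting the joint probability of the relevant clique configurations across the four indices. The main obstacle is the combinatorial bookkeeping at this step: one must enumerate the admissible overlap patterns of $(A_1, A_2, A_3, A_4)$ in which every clique is edge-connected to another and an edge is shared across the two time-groups, and verify that for each pattern the product (number of such quadruples) $\times\, p^{|E(A_1 \cup \cdots \cup A_4)|} \times h^2$ is dominated by ${\rm Var}[f_{n,k}(t)]^2 \cdot h^2 = \Theta(n^{4k} p^{4m - 2})\, h^2$. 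As in the proof of Lemma~\ref{lem:VarianceOfAllCliqueCounts}, the constraint $\alpha \in (-1/k, -1/(k+1))$ ensures that minimum-overlap configurations dominate and all others contribute at lower order, with the extra time factor $h^2$ carried through uniformly.
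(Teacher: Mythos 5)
Your overall plan — expand $\mathbb{E}[A^2B^2]$ as a sum over quadruples $(A_1,A_2,A_3,A_4)$, note that quadruples with an ``independent'' clique contribute zero, and then extract a factor of $h$ from each time increment — is in the right spirit, and the treatment of the separated quadruples (where the $X$-pair and $Y$-pair are edge-disjoint) is correct: the formula $\mathbb{E}[\tilde X_{A_1}\tilde X_{A_2}] = 2p^{2m-\binom{i}{2}}[1-(p+(1-p)e^{-\lambda h})^{\binom{i}{2}}]$ and its $O(h)$ bound do give the claimed $C_2^2 h^2$ contribution. However, there is a genuine gap in the coupled case. The pointwise bound $|\mathbb{E}[\tilde X_{A_1}\tilde X_{A_2}\mid G_t]|\le m\lambda h$ is \emph{uniform in $G_t$}, so taking the outer expectation just returns $m^2\lambda^2 h^2$ with no $p$-power at all, and you cannot recover the factor $p^{\text{pair}(\bar A)}$ afterwards ``by combinatorial bookkeeping.'' That factor is essential: for a minimal-overlap coupled configuration such as $a_{12}=a_{34}=a_{13}=2$ and all other pairwise overlaps trivial, there are $\Theta(n^{4k-2})$ quadruples, and dividing $n^{4k-2}\cdot m^2\lambda^2 h^2$ by ${\rm Var}[f_{n,k}]^2 = \Theta(n^{4k}p^{4m-2}) = \Theta(n^{4k+\alpha(4m-2)})$ yields $n^{-2-\alpha(4m-2)}h^2$, and $-2-\alpha(4m-2)\approx 2k - 2/k > 0$ as $\alpha\downarrow -1/k$ for $k\ge 2$; so without the $p$-power this blows up. With the correct $p^{4m-3}$ factor the ratio becomes $n^{-2-\alpha}\to 0$, as needed.

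The paper avoids this difficulty by not conditioning on $G_t$ at all. It shows directly (see Appendix B and the proof of Lemma~\ref{claim:Exp_gVal}) that $\mathbb{E}[g(h;\bar A)]$ factorises exactly as $p^{\text{pair}(\bar A)}\Phi(h;\bar A)$, where $\Phi$ is an explicit finite sum of products of $\tau(h)=p+(1-p)e^{-\lambda h}$ and $\tau(2h)$ raised to integer powers. The crucial observation is that $\Phi(0;\bar A)=0$ \emph{and} $\partial_h\Phi(h;\bar A)|_{h=0}=0$, so the Lagrange form of Taylor's theorem gives $|\Phi(h;\bar A)|\le \tfrac12 h^2\sup_c|\Phi''(c;\bar A)|$, and the second derivative is bounded by a constant times $(i+j+1)^4$. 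This cleanly separates the $p$-power from the $h^2$ factor. Lemma~\ref{claim: VerPairBound} then shows $n^{\text{ver}(\bar A)}p^{\text{pair}(\bar A)}\le n^{4k}p^{4m-2}$ uniformly over quadruples without an independent set, which is exactly the uniform combinatorial bound you would need to finish your argument. If you want to pursue your route, you would need to show that $\mathbb{E}\big[\,\mathbb{E}[\tilde X_{A_1}\tilde X_{A_2}\mid G_t]\,\mathbb{E}[\tilde Y_{A_3}\tilde Y_{A_4}\mid G_t]\,\big]$ itself carries the factor $p^{\text{pair}(\bar A)}h^2$; this can be done by analysing when the conditional expectations are of size $\Theta(h)$ versus $\Theta(h^2)$ depending on the configuration of $G_t$ restricted to $E(A_1\cup\cdots\cup A_4)$, but that analysis is essentially equivalent to computing $\mathbb{E}[g(h;\bar A)]$ exactly, which is what the paper does.
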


This follows from the next result and hence we prove only that.

\begin{lemma}
\label{lem:TightnessEulerCharacteristic}
Fix $k \geq 1$. Let $p = n^\alpha$, $\alpha \in \left(-\tfrac{1}{k}, -\tfrac{1}{k + 1}\right)$. Then, for the sequence $\{\text{\EulerBar} : n \geq 1\}$, condition \ref{cond: Tightness} holds with $\Upsilon_1 = \Upsilon_2 = 2.$ That is, for any $T > 0$, there exists $K_\chi > 0$ such that, for all $n \geq 1$, $0 \leq t \leq T + 1$ and $0 \leq h \leq t$,
\[
\mathbb{E}\left[\bar{\chi}_{n, k}(t + h) - \bar{\chi}_{n, k}(t)\right]^2 \left[\bar{\chi}_{n, k}(t) - \bar{\chi}_{n, k}(t - h)\right]^2 \leq K_\chi h^2.
\]
\end{lemma}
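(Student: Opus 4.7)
The plan is to proceed in three stages. First, I would invoke stationarity of the dynamic \ER graph: $(\chi_n(t-h), \chi_n(t), \chi_n(t+h)) \stackrel{d}{=} (\chi_n(0), \chi_n(h), \chi_n(2h))$, and ${\rm{Var}}[\chi_n(s)]$ is independent of $s$; hence it is enough to exhibit a constant $K'>0$ such that
\[
\mathbb{E}[(\chi_n(h)-\chi_n(0))^2(\chi_n(2h)-\chi_n(h))^2] \leq K' h^2 ({\rm{Var}}[\chi_n])^2
\]
uniformly in $n$ and $h \leq T+1$. Applying Cauchy--Schwarz together with stationarity, this left-hand side is bounded by $\mathbb{E}[(\chi_n(h)-\chi_n(0))^4]$, reducing the task to the single-increment fourth-moment estimate $\mathbb{E}[(\chi_n(h)-\chi_n(0))^4] \leq K' h^2 ({\rm{Var}}[\chi_n])^2$.

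Second, I would decompose $\chi_n(s) = (-1)^k f_{n,k}(s) + R_n(s)$ with $R_n(s) := \sum_{j \neq k}(-1)^j f_{n,j}(s)$, and set $\Delta := \chi_n(h) - \chi_n(0)$, $\Delta^f := f_{n,k}(h) - f_{n,k}(0)$, $\Delta^R := R_n(h) - R_n(0)$. The inequality $(a+b)^4 \leq 8(a^4+b^4)$ then reduces the problem to showing $\mathbb{E}[(\Delta^f)^4] \leq K_1 h^2 V_f^2$ and $\mathbb{E}[(\Delta^R)^4] = o(h^2 V_f^2)$, where $V_f := {\rm{Var}}[f_{n,k}]$; Lemma~\ref{lem:AsymptoticVarianceEulerCharacteristicCliqueCountSame} then identifies $V_f$ with ${\rm{Var}}[\chi_n]$ up to constants.

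Third, for the fourth moment of $\Delta^f$, I would expand $\Delta^f = \sum_{A \in \binom{[n]}{k+1}} D_A$ with $D_A := 1_A(h) - 1_A(0)$, so that
\[
\mathbb{E}[(\Delta^f)^4] = \sum_{A_1,A_2,A_3,A_4} \mathbb{E}[D_{A_1} D_{A_2} D_{A_3} D_{A_4}].
\]
Since each $D_A$ is mean-zero and supported on the edges of $A$, the dissociated structure ensures the summand vanishes unless every clique $A_i$ shares at least one edge with some other $A_j$; the two-point transition probabilities \eqref{eqn:EdgeTransition11} and \eqref{eqn:EdgeTransition00}, together with the expansion $1 - (p+(1-p)e^{-\lambda h})^m = O(mh)$ for small $h$, produce factors of $h$ from the requirement that some edge of each $A_i$ must change state during $[0,h]$. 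A case analysis on the overlap poset of $(A_1,A_2,A_3,A_4)$, in the spirit of Lemmas~\ref{lem:VarianceCliqueCount} and \ref{lem:AsymptoticCovarianceCliqueCount} but extended to the richer 4-tuple combinatorics, will then yield the $O(h^2 V_f^2)$ bound. The residual $\mathbb{E}[(\Delta^R)^4]$ is controlled by expanding $R_n$ as a signed sum of $f_{n,j}$'s with $j \neq k$, applying the same 4-tuple analysis term-by-term, and invoking Corollary~\ref{cor:VarjVarkRatio} together with the truncation argument of Lemma~\ref{lem:AsymptoticDifferenceCliqueCountEulerCharacterisitc} to discard $j \gg k$. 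The principal obstacle will be the 4-tuple case analysis itself: the overlap poset involves pairwise, triple, and quadruple intersection sizes, and carefully identifying the dominant contribution under $\alpha \in (-1/k, -1/(k+1))$ --- so that it matches $h^2 V_f^2$ rather than a larger power of $n$ --- requires a delicate extension of the pairwise arguments of Sections~\ref{sec:PreliminaryResults} and \ref{sec:Covariance}.
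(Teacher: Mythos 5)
The Cauchy--Schwarz reduction in your first stage is a genuine gap, and it invalidates the rest of the plan. You reduce to showing $\mathbb{E}[(\chi_n(h)-\chi_n(0))^4] \leq K' h^2 ({\rm{Var}}[\chi_n])^2$, but this bound is false: for fixed $n$ the fourth moment of a single increment is $\Theta(h)$, not $O(h^2)$, as $h\to 0$. The reason is that $\chi_n$ is a pure-jump process, and with probability of order $h$ a single edge flips during $[0,h]$, producing an $O(1)$ (in $h$) change in $\chi_n$. For a concrete check take $k=1$, so $\chi_n(h)-\chi_n(0)$ includes the edge-count increment $\sum_e D_e$ with $D_e=1_e(h)-1_e(0)\in\{-1,0,1\}$ i.i.d.\ mean zero and $q:=\Pr\{D_e\neq 0\}=2p(1-p)(1-e^{-\lambda h})$. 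Then $\mathbb{E}\big[(\sum_e D_e)^4\big]=\binom{n}{2}q+3\binom{n}{2}\big(\binom{n}{2}-1\big)q^2$, and dividing by $({\rm{Var}}[\chi_n])^2=\Theta(n^4p^2)$ gives, up to constants, $h/(n^2p)+h^2$. For any fixed $n$ this exceeds $Kh^2$ once $h<1/(Kn^2p)$, so the single-increment bound cannot hold uniformly. The same defect ruins your treatment of $\mathbb{E}[(\Delta^R)^4]$, since those fourth moments also have $\Theta(h)$ contributions.

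This is precisely why the tightness criterion \ref{cond: Tightness} in Theorem~\ref{thm:SufficientConditionsConvergenceOrnsteinUhlenbeckProcess} is stated as a product of two-sided increments: for a jump process the product vanishes unless there is at least one change in each of the disjoint intervals $[t-h,t]$ and $[t,t+h]$, yielding $\Theta(h)\cdot\Theta(h)=\Theta(h^2)$. The paper never decouples the two factors. It expands $\chi_n$ as the alternating sum of clique counts, writes the resulting cross-moment $\xi_{ij}(h)$ in \eqref{defn:CrossMomentij} as a sum over 4-tuples of cliques of $\mathbb{E}[g(h;\bar A)]$, where $g(h;\bar A)$ in \eqref{eqn:defn_gEulerCharacteristic} is itself a product of increments over the two disjoint intervals, and then shows in Lemma~\ref{claim:Exp_gVal} that $\mathbb{E}[g(h;\bar A)]=p^{\text{pair}(\bar A)}\Phi(h;\bar A)$ with $\Phi(0;\bar A)=0$ and $\partial_h\Phi(0;\bar A)=0$. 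The $h^2$ factor then comes from the Lagrange remainder in the second-order Taylor expansion --- an analytic manifestation of the two-interval product structure. Your stage-three 4-tuple overlap analysis is close in spirit to the paper's Lemmas~\ref{claim: VerPairBound} and \ref{claim:UpperBoundXiij}, but it must be run on the two-interval product $g(h;\bar A)$, not on the single-interval fourth moment produced by Cauchy--Schwarz.
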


Before turning to the proof of Lemma \ref{lem:TightnessEulerCharacteristic} we need some additional notation and preliminary lemmas. Fix arbitrary $n, k \geq 1$ and let $p$ be as in Lemma~\ref{lem:TightnessEulerCharacteristic}. Also fix $i$ and $j$ such that $0 \leq i, j \leq n - 1$ and let
\begin{equation}
\label{defn:CrossMomentij}
\xi_{ij}(h) := \mathbb{E}[f_{n, i}(2h) - f_{n, i}(h)]^2[f_{n, j}(h) - f_{n, j}(0)]^2.
\end{equation}
For $\bar{A} \equiv (A_1, A_2, A_3, A_4) \in \tbinom{[n]}{i + 1}^2 \times \tbinom{[n]}{j + 1}^2$, let $a_q$ be the number of vertices in $A_q$, $a_{qr}$ be the number of vertices common to $A_q$ and $A_r$, and so on. Note that inequalities such as $a_{1234} \leq a_{qrs} \leq a_{qr} \leq a_q$ for any $q,r,s \in \{1, \ldots, 4\}$ hold trivially. Let
\[
\tau(\bar{A}) = (a_1, \ldots, a_4, a_{12}, \ldots, a_{34}, a_{123}, \ldots, a_{234}, a_{1234}),
\]
\begin{equation}
\label{eqn:defn_Ver}
\text{\ver{A}} = \sum_{q = 1}^4 a_q - \sum_{1 \leq q < r \leq 4} a_{qr} + \sum_{1\leq q < r < s \leq 4} a_{qrs} - a_{1234},
\end{equation}
\begin{equation}
\label{eqn:defn_Pair}
\text{\pair{A}} = \sum_{q = 1}^4 \binom{a_q}{2} - \sum_{1 \leq q < r \leq 4} \binom{a_{qr}}{2} + \sum_{1\leq q < r < s \leq 4} \binom{a_{qrs}}{2} - \binom{a_{1234}}{2},
\end{equation}
and
\begin{multline}
\label{eqn:defn_gEulerCharacteristic}
g(h; \bar{A}) := \left[1_{A_1}(2h) - 1_{A_1}(h)\right]\left[1_{A_2}(2h) - 1_{A_2}(h)\right]\\ \times \left[1_{A_3}(h) - 1_{A_3}(0)\right]\left[1_{A_4}(h) - 1_{A_4}(0)\right].
\end{multline}
Here $\tau(\bar{A})$ denotes the intersection type of $\bar{A}$, while \ver{A} and \pair{A} denote respectively the number of vertices and maximum possible edges in $A_1, \ldots, A_4$ with common vertices and edges counted only once. Terms of the form $g(h; \bar{A})$ appear in the expansion of $\xi_{ij}(h)$ and hence will be useful later.

For $\bar{A},\bar{B} \in \tbinom{[n]}{i + 1}^2 \times \tbinom{[n]}{j + 1}^2$, we write $\bar{A} \sim \bar{B}$ if there exists a permutation $\pi$ of the sets in $\bar{B}$ such that $\tau(\bar{A}) = \tau(\pi(\bar{B}))$. A priori, it may appear that the intersection type of all $24$ permutations of the sets in $\bar{B}$ need to be compared with $\tau(\bar{A})$ before concluding $\bar{A} \sim \bar{B}$ or not. But this is true only when $i = j.$ When $i \neq j$, many of the permutations need not be checked. For example, the permutation that interchanges the first and third set can be ignored. Clearly, $\sim$ is an equivalence relation. Let $\Gamma_{ij} := \{[\bar{A}]\}$ denote the quotient of $\tbinom{[n]}{i + 1}^2 \times \tbinom{[n]}{j + 1}^2$ under $\sim$, where $[\bar{A}]$ denotes the equivalence class of $\bar{A}$. Since each $a_{qr}, a_{qrs}$, and $a_{1234}$ ($11$ variables in total) is a number between $0$ and $\max\{i+ 1,j + 1\} \leq (i +j + 1)$, the cardinality of $\Gamma_{ij}$ satisfies
\begin{equation}
\label{eqn:CardinalityGammaij}
|\Gamma_{ij}| \leq (i + j + 1)^{11}.
\end{equation}

We shall say $\bar{A} \in \tbinom{[n]}{i + 1}^2 \times \tbinom{[n]}{j + 1}^2$ has an {\it independent set} if there exists $q \in \{1, 2, 3, 4\}$ such that $a_{qr} \leq 1$ for all $r\neq q$. That is, there exists a special set among $A_1, \ldots, A_4$ which shares at most one vertex with the remaining three sets. Clearly, the indicator associated with this special set is independent of the indicator associated with the other three sets. Based on this description,  let
\begin{equation}
\label{eqn:defn_IndependentSet}
\mathscr{S}_{ij} := \left\{[\bar{A}] \in \Gamma_{ij} : \exists q \in \{1, 2, 3, 4\} \text{ such that } \forall r \neq q, a_{qr} \leq 1\right\}.
\end{equation}

\begin{lemma}
\label{claim:Exp_gVal}
Fix arbitrary $n, k \geq 1$,  and let $p$ be as in Lemma~\ref{lem:TightnessEulerCharacteristic}. Also fix $i$ and $j$ such that $0 \leq i, j \leq n - 1$. Fix $\bar{A} \in \tbinom{[n]}{i + 1}^2 \times \tbinom{[n]}{j + 1}^2$.
\begin{enumerate}
\item[(i)] If $[\bar{A}] \in \mathscr{S}_{ij}$, then $\mathbb{E}[g(h; \bar{A})] \equiv 0$.

\item[(ii)] If $[\bar{A}] \in \Gamma_{ij} \backslash \mathscr{S}_{ij}$, then there exists some universal constant $\gamma \geq 0$ (independent of $\bar{A}$, $i,j,k$, and $n$) such that,
for all $0 \leq h \leq 1$,
\[
|\mathbb{E}[g(h; \bar{A})]| \leq \gamma (i + j + 1)^4 p^{\text{pair}(\bar{A})} h^2.
\]
\end{enumerate}
\end{lemma}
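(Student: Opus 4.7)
The lemma naturally splits into two parts that call for quite different arguments. For Part (i) I would reduce to an independence/stationarity observation, and for Part (ii) I would dominate the absolute value of $g$ by an indicator that all four clique-status toggles occur, split this probability across the two disjoint time intervals $[0,h]$ and $[h,2h]$ via the Markov property at $h$, and then combine elementary edge-transition bounds with the probability of the required clique configurations at time $h$.

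For Part (i), if $[\bar A]\in\mathscr{S}_{ij}$ I would pick the index $q$ guaranteed by the definition of $\mathscr{S}_{ij}$, for which $a_{qr}\leq 1$ for every $r\neq q$. Because $A_q$ shares at most one vertex with any other $A_r$, the edge set of $A_q$ is disjoint from the edge sets of the remaining $A_r$'s. The independence of the individual edge chains then yields the independence of $\{1_{A_q}(t)\}_{t\geq 0}$ from $\{(1_{A_r}(t))_{r\neq q}\}_{t\geq 0}$, hence of $\phi_q:=1_{A_q}(t_2^q)-1_{A_q}(t_1^q)$ from $\prod_{r\neq q}\phi_r$. Stationarity of the edge chain (via \eqref{eqn:EdgeProbability}) forces $\mathbb{E}[\phi_q]=0$, and therefore $\mathbb{E}[g(h;\bar A)]=\mathbb{E}[\phi_q]\,\mathbb{E}\bigl[\prod_{r\neq q}\phi_r\bigr]=0$.

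For Part (ii), assume $[\bar A]\notin\mathscr{S}_{ij}$, so that for every $q$ there exists $r\neq q$ with $a_{qr}\geq 2$. Set $E_q:=\{1_{A_q}(t_1^q)\neq 1_{A_q}(t_2^q)\}$ where $(t_1^q,t_2^q)=(h,2h)$ for $q\in\{1,2\}$ and $(0,h)$ for $q\in\{3,4\}$, so that $|\phi_q|=\mathbb{1}_{E_q}$ and $|g(h;\bar A)|=\mathbb{1}_{\bigcap_q E_q}$. Let $\mathcal{F}_h$ be the $\sigma$-algebra generated by every edge's state at time $h$. Since $E_1\cap E_2$ is measurable with respect to the edge evolution during $[h,2h]$ while $E_3\cap E_4$ is measurable with respect to that during $[0,h]$, the Markov property (together with the reversibility of the chain on $[0,h]$) gives
\[
|\mathbb{E}[g(h;\bar A)]|\ \leq\ \mathbb{E}\bigl[\Pr(E_1\cap E_2\mid\mathcal{F}_h)\,\Pr(E_3\cap E_4\mid\mathcal{F}_h)\bigr].
\]
From \eqref{eqn:EdgeTransition11}--\eqref{eqn:EdgeTransition00}, each edge flips during any window of length $h$ with probability at most $(1-p)(1-e^{-\lambda h})\leq \lambda h$; a union bound over the $\binom{a_q}{2}$ edges of $A_q$ yields $\Pr(E_q\mid\mathcal{F}_h)\leq 2\binom{a_q}{2}\lambda h$ on the event that $\mathcal{F}_h$ is compatible with $E_q$. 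Enumerating admissible configurations of $(1_{A_q}(h))_{q=1}^4$ and integrating against their joint law, the ``all four cliques at time $h$'' indicator contributes a factor $p^{\text{pair}(\bar A)}$ (this exponent counts precisely the distinct edges appearing in $A_1\cup\cdots\cup A_4$), the two toggle bounds combine to $\leq C(a_1+a_2)^2(a_3+a_4)^2(\lambda h)^2$, and every combinatorial term fits inside $(i+j+1)^4$, yielding the claimed estimate.

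The main technical obstacle will be the case analysis on $(1_{A_q}(h))_{q=1}^4$: configurations in which some $1_{A_q}(h)=0$ realise $E_q$ through off-to-on transitions, each of which requires both an additional $\lambda h$ factor and a compensating power of $p$ from the ``off at $h$'' constraint. Verifying that these sub-leading contributions remain uniformly bounded by $p^{\text{pair}(\bar A)}h^2$ across all $\bar A\notin\mathscr{S}_{ij}$ requires careful bookkeeping of the vertex and edge overlap structure encoded in $\tau(\bar A)$, and exploiting the hypothesis that every $A_q$ has at least one $A_r$ with $a_{qr}\geq 2$ (so that some toggle-enabling shared edge always exists in the time interval containing $A_q$).
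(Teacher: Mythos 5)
Your Part (i) is essentially the paper's (which only says ``straightforward from stationarity''): pick the $q$ with $a_{qr}\le 1$, note that $A_q$ shares no edge with the other $A_r$'s, so the factor $1_{A_q}(t_2^q)-1_{A_q}(t_1^q)$ is independent of the rest and has mean zero.

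For Part (ii) you take a genuinely different route. The paper computes $\mathbb{E}[g(h;\bar A)]$ \emph{exactly} as $p^{\text{pair}(\bar A)}\Phi(h;\bar A)$, where $\Phi$ is the explicit $16$-term expression \eqref{eqn:PhiVal} in Appendix B built from $\tau(h)=p+(1-p)e^{-\lambda h}$; it then checks $\Phi(0)=\partial_h\Phi(0)=0$ and applies Taylor with Lagrange remainder, with $(i+j+1)^4$ coming from the bound \eqref{eqn:gcoeffcientUpperBound} on the exponents. Your approach is probabilistic: $|\mathbb{E}[g]|\le\Pr(\bigcap_q E_q)$, and the Markov property (plus reversibility) at time $h$ gives the factorisation across the two disjoint windows, which is a nice way to see the $h^2$ directly and avoids writing down $\Phi$ at all.

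The gap, which you yourself flag, is the step where $p^{\text{pair}(\bar A)}$ is supposed to emerge from ``enumerating admissible configurations''. As stated it doesn't: the uniform bound $\Pr(E_q\mid\mathcal F_h)\le 2\binom{a_q}{2}\lambda h$ followed by $\mathbb{E}[\cdot]$ gives only $O\bigl((i+j+1)^4 h^2\bigr)$ with no power of $p$ at all, and trying to recover it by summing $\Pr(\mathcal F_h=c)$ against configuration-dependent refinements of that bound tends to produce an unwanted $2^{\text{pair}(\bar A)}$ (each edge contributes $p+(1-p)p^{d_e}\le 2p$ rather than $p$). The correct way to realise your idea is to partition $\bigcap_q E_q$ by the endpoint $s_q\in\{t_1^q,t_2^q\}$ at which $A_q$ \emph{is} a clique ($16$ cases), pick one off-edge $e\in A_r$ with $r\in\{1,2\}$ and one off-edge $e'\in A_{r'}$ with $r'\in\{3,4\}$, and bound directly using edge-independence: each of the $\text{pair}(\bar A)$ distinct edges must be on at some required time (factor $\le p$), while $e$ and $e'$ must additionally flip in a window of length $h$ (factor $\le p(1-p)(1-e^{-\lambda h})\le p\lambda h$ each), giving $\le\binom{a_1}{2}\binom{a_3}{2}p^{\text{pair}(\bar A)}(\lambda h)^2$ per case. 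This does not need the $[\bar A]\notin\mathscr S_{ij}$ hypothesis you invoke at the end (that hypothesis is only used in the paper's Part (i) to get the stronger conclusion $\mathbb{E}[g]\equiv 0$; the upper bound of Part (ii) is valid for every $\bar A$). So: the idea is sound and arguably more illuminating than the paper's brute-force Taylor expansion, but the $\mathcal F_h$-averaging framing needs to be replaced by the endpoint decomposition (or something equally careful) to actually produce $p^{\text{pair}(\bar A)}$.
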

\begin{proof}
The first claim is straightforward and follows from the stationarity of the dynamic \ER graph. So we discuss only the second one.

Fix $\bar{A} \in \tbinom{[n]}{i + 1}^2 \times \tbinom{[n]}{j + 1}^2$ with $[\bar{A}] \in \Gamma_{ij} \backslash \mathscr{S}_{ij}$. It is tedious but not difficult to see that $g(h; \bar{A})$ satisfies \eqref{eqn:gExpansion}, cf.\ Appendix \ref{sec:app}. Hence using \eqref{eqn:EdgeTransition11} and \eqref{eqn:EdgeProbability}, we have
\begin{equation}
\label{eqn:Expg}
\mathbb{E}[g(h; \bar{A})] = p^{\text{pair}(\bar{A})} \Phi(h; \bar{A}),
\end{equation}
where $\Phi(h; \bar{A})$ is as in \eqref{eqn:PhiVal}. Note that $\Phi(h; \bar{A})$ has the form
\begin{equation}
\label{eqn:PhiSum}
\Phi(h; \bar{A}) = \sum_{\ell = 1}^{16} \phi_\ell(h; \bar{A}),
\end{equation}
where, for each $\ell$,
\begin{equation}
\label{eqn:Phi_l}
\phi_{\ell}(h; \bar{A}) = \pm ((1 - p) e^{-h} + p)^{c_1(\ell)} ((1 - p) e^{-2h} + p)^{c_2(\ell)}
\end{equation}
with
\begin{equation}
\label{eqn:gcoeffcientUpperBound}
0 \leq c_1(\ell), c_2(\ell) \leq \sum_{1 \leq q < r \leq 4} \tbinom{a_{qr}}{2} + \tbinom{a_{1234}}{2} \leq 7(i + j + 1)^2.
\end{equation}
By analysing \eqref{eqn:PhiVal}, it is not difficult to see that
\[
\left.\Phi(h; \bar{A})\right|_{h = 0} = 0, \text{ and }  \left.\frac{\partial \Phi(h; \bar{A})}{\partial h}\right|_{h = 0} = 0.
\]
Because of the above two facts, expanding $\Phi(h; \bar{A})$ using the Lagrangian form of Taylor series shows that, for each $0 \leq h \leq 1$, there exists $c \in [0,h]$ such that
\begin{equation}
\label{eqn:UpperBoundPhi}
\Phi(h; \bar{A}) = \frac{1}{2}h^2 \left.\frac{\partial^2 \Phi(h; \bar{A})}{\partial h^2}\right|_{h = c}.
\end{equation}
Now using \eqref{eqn:PhiSum}, \eqref{eqn:Phi_l}, \eqref{eqn:gcoeffcientUpperBound}, and the fact that both $((1 - p) e^{-h} + p)$ and $((1 - p)e^{-2h} + p)$ are bounded from above by $1$ for $h \geq 0$, it is not difficult to see that there exists some universal constant $\gamma_1 \geq 0$ (independent of $\bar{A}, i, j, k$, and $n$) such that
\[
\max_{1 \leq \ell \leq 16} \sup_{h \geq 0} \left|\frac{\partial^2 \phi_\ell(h; \bar{A})}{\partial h^2}\right| \leq \gamma_1 (i + j + 1)^4.
\]
Combining this with \eqref{eqn:PhiSum} and \eqref{eqn:UpperBoundPhi}, it follows that $|\Phi(h; \bar{A})| \leq 8\gamma_1  (i + j + 1)^4 h^2.$ Using this inequality in \eqref{eqn:Expg}, the  result follows.
\end{proof}

\begin{lemma}
\label{claim: VerPairBound}
Fix arbitrary $n, k \geq 1$, and let $p$ be as in Lemma~\ref{lem:TightnessEulerCharacteristic}. Also fix $i$ and $j$ such that $0 \leq i, j \leq n - 1$. Fix $\bar{A} \in \tbinom{[n]}{i + 1}^2 \times \tbinom{[n]}{j + 1}^2$.
\begin{enumerate}
\item[(i)]  If $[\bar{A}] \in \Gamma_{ij} \backslash \mathscr{S}_{ij}$, then
\[
\frac{n^\text{\ver{A}} p^\text{\pair{A}}}{n^{4k} p^{4\tbinom{k + 1}{2} - 2}} \leq 1.
\]
\item[(ii)]   If $[\bar{A}] \in \Gamma_{ij} \backslash \mathscr{S}_{ij}$ and $(i + j) \geq 16k + 15$, then
\[
\frac{n^\text{\ver{A}} p^\text{\pair{A}}}{n^{4k} p^{4\tbinom{k + 1}{2} - 2}} \leq \frac{1}{n^{2k + 2(i + j - 16k - 15)} }.
\]
\end{enumerate}
\end{lemma}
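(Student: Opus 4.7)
Since $p = n^\alpha$, both inequalities reduce to bounds on the exponent $E(\bar A) := \text{ver}(\bar A) + \alpha \cdot \text{pair}(\bar A)$. The plan is first to rewrite $E(\bar A)$ by viewing $|V(G)|$ and $|E(G)|$ for the union-of-cliques graph $G := \bigcup_q K_{A_q}$ via inclusion-exclusion, yielding
\[
E(\bar A) \ = \ \sum_q \phi(a_q) \ - \sum_{q < r} \phi(a_{qr}) \ + \sum_{q < r < s}\phi(a_{qrs}) \ - \ \phi(a_{1234}),
\]
where $\phi(a) := a + \alpha \binom{a}{2}$. Because $\alpha \in \bigl(-\tfrac{1}{k}, -\tfrac{1}{k+1}\bigr)$, the function $\phi$ is strictly concave, its maximum over non-negative integers is attained at $a = k+1$, and $\phi(k+1) \geq \phi(2) = 2 + \alpha$. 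Writing $f(2) := 2\phi(k+1) - \phi(2) = 2k + \alpha\bigl(2\binom{k+1}{2} - 1\bigr)$, part (i) becomes $E(\bar A) \leq 2 f(2)$ and part (ii) strengthens this by an additive $-2k - 2(i + j - 16k - 15)$.

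For part (i), I will introduce the intersection graph $H$ on $\{1, 2, 3, 4\}$ in which $\{q, r\}$ is an edge iff $a_{qr} \geq 2$. The hypothesis $[\bar A] \notin \mathscr{S}_{ij}$ says precisely that $H$ has no isolated vertex, and on four nodes this forces $H$ either to admit a perfect matching or to be the star $K_{1, 3}$. The key algebraic tool I will prove first is a \emph{pair bound}: whenever $c \geq 2$ and $c \leq \min(a, b)$,
\[
\phi(a) + \phi(b) - \phi(c) \ \leq \ f(2),
\]
which follows from $\phi(a), \phi(b) \leq \phi(k+1)$ together with the fact that $-\phi(c)$ is convex on $[2, \min(a, b)]$, so the maximum of the left-hand side over $c$ in that range is at a boundary, where the expression is either $\leq 2\phi(k+1) - \phi(2) = f(2)$ or reduces to $\phi(\max(a,b)) \leq \phi(k+1) \leq f(2)$. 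In the perfect-matching case (WLOG $a_{12}, a_{34} \geq 2$), the pair bound applied to $(A_1, A_2)$ and $(A_3, A_4)$ yields two contributions of at most $f(2)$; the remaining inclusion-exclusion terms (the four cross-pair $\phi(a_{qr})$'s, the four triple terms, and $\phi(a_{1234})$) are then shown to have non-positive net contribution by using $\phi(a_{qrs}) \leq \phi(a_{qr})$ together with a short sub-case analysis. The $K_{1, 3}$ case is handled symmetrically using the three pair bounds through the center.

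For part (ii), the condition $i + j \geq 16k + 15$ forces at least one $a_q$ to substantially exceed $k + 1$. A direct estimate using $\phi'(a) = 1 + \alpha(a - 1/2)$ and $|\alpha| > 1/(k+1)$ gives a linear deficit $\phi(k+1) - \phi(a) \geq 2(a - k - 1)$ in the relevant range, and accumulating these deficits across the four $\phi(a_q)$ terms yields the extra $-2k - 2(i + j - 16k - 15)$ in the exponent, plugged into the part (i) argument. The main obstacle will be the sub-case analysis in part (i) when the matched pairs have substantial overlap: the naive splitting across $B := A_1 \cup A_2$ and $C := A_3 \cup A_4$ loses a correction $|B \cap C| + \alpha |(\binom{A_1}{2} \cup \binom{A_2}{2}) \cap (\binom{A_3}{2} \cup \binom{A_4}{2})|$ of either sign, and I will need to cash in slack from the pair bound --- available precisely when some $a_q > k+1$, which is forced when $|B \cap C|$ is large --- to close the inequality uniformly in $i, j$.
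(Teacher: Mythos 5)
Your reformulation is appealing: writing $E(\bar A):=\text{ver}(\bar A)+\alpha\,\text{pair}(\bar A)=\sum_q\phi(a_q)-\sum_{q<r}\phi(a_{qr})+\sum_{q<r<s}\phi(a_{qrs})-\phi(a_{1234})$ with $\phi(a)=a+\alpha\binom{a}{2}$ is correct, the observation that $\phi$ is concave with integer maximum at $a=k+1$ is the right heuristic, and your ``pair bound'' $\phi(a)+\phi(b)-\phi(c)\leq 2\phi(k+1)-\phi(2)$ for $2\leq c\leq\min(a,b)$ is valid (by the endpoint argument you give). But the step that would finish part (i) --- that the eight remaining inclusion--exclusion terms have non-positive net contribution --- is neither proved nor provable by the device you name. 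You invoke ``$\phi(a_{qrs})\leq\phi(a_{qr})$,'' but because $\phi$ \emph{decreases} beyond $k+1$, this fails whenever $a_{qrs}\leq k+1<a_{qr}$; and there are admissible configurations (e.g.\ $a_{12}=a_{34}=2$, $a_{13}=a_{14}=a_{23}=a_{24}=a>2k+3$ with all triple and quadruple intersections zero, realizable once $i,j$ are large enough) in which the cross-term residue is strictly negative, so the two matched pair bounds genuinely do not close the inequality on their own. You concede exactly this in your last paragraph (``I will need to cash in slack from the pair bound ... to close the inequality uniformly''), but that is the crux of the lemma, not a cleanup step, and it is left undone. For part (ii), the asserted linear deficit $\phi(k+1)-\phi(a)\geq 2(a-k-1)$ is also false near the peak: at $a=k+2$ one has $\phi(k+1)-\phi(k+2)=-[1+\alpha(k+1)]\in(0,1/k)$, far short of $2$, so the accumulation argument as written does not start.

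For comparison, the paper avoids both difficulties. For (i) it cites the argument behind (8) of Kahle--Meckes, which bounds $n^{\text{ver}(\bar A)}p^{\text{pair}(\bar A)}$ by adding the sets one at a time (fourth, then third, ...), each new set contributing at most $n^{k}p^{\binom{k+1}{2}-1}$ (coming from the no-independent-set hypothesis, which guarantees a $\geq 2$-vertex overlap with something already placed); this is a one-sided peeling with no cross-term residue to control. For (ii) it does not touch $\phi$ at all: the no-independent-set condition gives $\text{ver}(\bar A)\leq 2i+2j$, while trivially $\text{pair}(\bar A)\geq\max\bigl\{\binom{i+1}{2},\binom{j+1}{2}\bigr\}\geq\tfrac14\binom{i+j+1}{2}$, and then a one-line induction on $\ell=i+j\geq 16k+15$ (the exponent drops by at least $2$ per unit increase of $\ell$ because $-\alpha(\ell+1)/4\geq 4$ there) produces the extra $n^{-2k-2(i+j-16k-15)}$. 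If you wish to salvage your route, the place to look is the same peeling structure: bound $E(\bar A)$ by adjoining $A_2,A_3,A_4$ sequentially rather than in pairs, which keeps every correction term one-signed and sidesteps the $R\geq 0$ question entirely.
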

\begin{proof}
From \eqref{eqn:defn_IndependentSet}, as $\bar{A} \in \Gamma_{ij} \backslash \mathscr{S}_{ij}$, one of the below cases must hold.

\textbf{Case A:} Either $a_{12}, a_{34} \geq 2$, or $a_{13}, a_{24} \geq 2$, or $a_{14}, a_{23} \geq 2$.

\textbf{Case B:} There exists $q \in \{1, \ldots, 4\}$ such that $a_{qr} \geq 2$ for all $r \neq q$.

In both cases, using essentially the same arguments as those used to obtain (8) in \cite{kahle2014erratum}, with the differences noted below, we have
\[
n^{\text{\ver{A}}} p^{\text{\pair{A}}} \leq n^{4k} p^{4 \tbinom{k + 1}{2} - 2}.
\]
This proves the first claim of the lemma  modulo clearing up the two main differences between the arguments needed here and  those used in \cite{kahle2014erratum}. The first relates to the fact that
\cite{kahle2014erratum}  dealt with the intersection of three sets while here we need to deal with four sets. In both cases, however,  independent sets are absent, i.e., each set has at least two vertices in common with one of the remaining sets.

Secondly, in \cite{kahle2014erratum}, an upper bound for $n^{\text{\ver{A}}} p^{\text{\pair{A}}}$, with $\text{ver}(\bar{A})$ and $\text{pair}(\bar{A})$ appropriately defined, was obtained by sequentially dealing with the number of vertices in the third set, then the second set, and so on. Here we have to repeat the same idea by first dealing with the number of vertices in the fourth set, then third, etc.

Now consider the second claim of the lemma.  Again the conditions of Case \textbf{A} and Case \textbf{B} defined above must hold. Hence, from \eqref{eqn:defn_Ver}, \eqref{eqn:defn_Pair}, and \eqref{eqn:defn_IndependentSet}, we have $\text{\ver{A}} \leq 2i + 2j$ and
\[
\text{\pair{A}} \geq \max\left\{ \tbinom{i + 1}{2}, \tbinom{j + 1}{2}\right\}.
\]
Using these and fact that $\alpha \in \left(-\tfrac{1}{k}, -\tfrac{1}{k + 1}\right)$, we have
\[
\text{\ver{A}} + \alpha  \text{\pair{A}} \leq  2(i + j) + \alpha \max \left\{ \tbinom{i + 1}{2}, \tbinom{j + 1}{2}\right\}.
\]
Since $\max \left\{ \tbinom{i + 1}{2}, \tbinom{j + 1}{2}\right\} \geq \tbinom{i + j + 1}{2}/4$, it follows that
\[
\text{\ver{A}} + \alpha  \text{\pair{A}} \leq 2(i + j) + \alpha \tbinom{i + j + 1}{2}/4.
\]
Consequently, to prove the desired result, it suffices to show that for $i + j \geq 16k + 15$,
\begin{equation}
\label{eqn:desResult1}
\frac{n^{2(i + j) + \alpha \tbinom{i + j + 1}{2}/4}}{n^{4k} p^{4\tbinom{k + 1}{2} - 2}} \leq \frac{1}{n^{2k + 2(i + j - 16k - 15)} }.
\end{equation}

Now observe that if $i + j = 16k + 15$, then
\[
\frac{n^{2(i + j)+ \alpha \tbinom{i + j + 1}{2}/4}}{n^{4k + \alpha (4\tbinom{k + 1}{2} - 2)}} \leq \frac{1}{n^{2k}}.
\]
Suppose that for $i^\prime$ and $j^\prime$ with $(i^\prime + j^\prime) \geq 16k + 15$, the desired result holds. Now consider $i$ and $j$ satisfying $(i + j) = (i^\prime + j^\prime) + 1$. Since $(i^\prime + j^\prime) \geq 16k + 15,$
\[
2(i + j) - 2(i^\prime + j^\prime) + \alpha \left[\tbinom{i + j + 1}{2}/4 - \tbinom{i^\prime + j^\prime + 1}{2}/4\right]  =  2 + \alpha \left(i^\prime + j^\prime + 1\right)/4 \leq  -2.
\]
By induction, \eqref{eqn:desResult1} follows and so does the claim.
\end{proof}

\begin{lemma}
\label{claim:UpperBoundXiij}
Fix arbitrary $n, k \geq 1$,  and let $p$ be as in Lemma~\ref{lem:TightnessEulerCharacteristic}. Also fix $i$ and $j$ such that $0 \leq i, j \leq n - 1$. Let $\xi_{ij}(h)$ be as in \eqref{defn:CrossMomentij} and $\gamma$ as in Lemma~\ref{claim:Exp_gVal}.
\begin{enumerate}
\item[(i)] If $(i + j) < 16k + 15$, then
\[
\frac{\xi_{ij}(h)}{n^{4k} p^{4\tbinom{k + 1}{2} - 2}} \leq \gamma \; (i + j + 1)^{15} h^2.
\]

\item[(ii)] If $(i + j) \geq 16k + 15$, then
\[
\frac{\xi_{ij}(h)}{n^{4k} p^{4\tbinom{k + 1}{2} - 2}} \leq \gamma \;  \frac{(i + j + 1)^{15}}{n^{2k + 2 (i + j - 16k - 15)}} h^2.
\]
\end{enumerate}
\end{lemma}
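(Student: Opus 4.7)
The plan is to expand $\xi_{ij}(h)$ into a sum indexed by four-tuples and then bin the terms by intersection type. Using \eqref{defn:CrossMomentij}, \eqref{eqn:CliqueCount} and \eqref{eqn:defn_gEulerCharacteristic}, one has
\[
\xi_{ij}(h) \;=\; \sum_{\bar{A} \in \binom{[n]}{i+1}^2 \times \binom{[n]}{j+1}^2} \mathbb{E}[g(h; \bar{A})].
\]
Since $\mathbb{E}[g(h; \bar{A})]$ depends on $\bar{A}$ only through its intersection type $\tau(\bar{A})$ (edges are independent on/off Markov chains, so the joint law of the $1_{A_q}(\cdot)$'s is determined by the pattern of shared edges), I would regroup this as a sum over the equivalence classes $[\bar{A}] \in \Gamma_{ij}$.

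First I would invoke Lemma~\ref{claim:Exp_gVal}(i) to drop every class in $\mathscr{S}_{ij}$, leaving only $[\bar{A}] \in \Gamma_{ij} \setminus \mathscr{S}_{ij}$. For each such class, Lemma~\ref{claim:Exp_gVal}(ii) provides the uniform estimate $|\mathbb{E}[g(h; \bar{B})]| \leq \gamma (i+j+1)^4 p^{\text{pair}(\bar{A})} h^2$. I would then count the members of a class $[\bar{A}]$ by first selecting the underlying $\text{ver}(\bar{A})$ distinct vertices (at most $n^{\text{ver}(\bar{A})}$ choices) and then distributing them among the four sets $A_1,\dots,A_4$ according to the fixed intersection pattern; the latter contributes only a combinatorial multiplicity in $(i+j+1)$, which I shall absorb into the final polynomial prefactor. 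This yields
\[
\xi_{ij}(h) \;\leq\; \gamma (i+j+1)^4 h^2 \!\!\!\! \sum_{[\bar{A}] \in \Gamma_{ij} \setminus \mathscr{S}_{ij}} \!\!\!\! n^{\text{ver}(\bar{A})} p^{\text{pair}(\bar{A})}.
\]

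Dividing by $n^{4k} p^{4\binom{k+1}{2}-2}$ and applying Lemma~\ref{claim: VerPairBound} finishes both cases. In case~(i), when $(i+j) < 16k+15$, part~(i) of that lemma shows that each summand is bounded by $1$ after normalisation; combined with $|\Gamma_{ij}| \leq (i+j+1)^{11}$ from \eqref{eqn:CardinalityGammaij}, this gives the bound $\gamma (i+j+1)^{15} h^2$, with the exponent $15 = 11 + 4$ arising naturally from the class count and the per-class bound. In case~(ii), when $(i+j) \geq 16k+15$, part~(ii) of Lemma~\ref{claim: VerPairBound} introduces the additional factor $n^{-(2k+2(i+j-16k-15))}$ uniformly, and the same book-keeping delivers the asserted bound. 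The whole argument is clerical: the substantive combinatorics and edge-probability calculations are already in Lemmas~\ref{claim:Exp_gVal} and \ref{claim: VerPairBound}. The only mild subtlety is to ensure that the per-class vertex-assignment multiplicity is genuinely polynomially bounded in $(i+j+1)$ rather than in $n$, which is immediate because the assignment is constrained entirely by $\tau(\bar{A})$ and involves only indices bounded by $i+j+1$.
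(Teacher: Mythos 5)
Your proof is correct and follows essentially the same route as the paper: expand $\xi_{ij}(h)$ as a sum over four-tuples, regroup by equivalence class, kill the classes in $\mathscr{S}_{ij}$ via Lemma~\ref{claim:Exp_gVal}(i), bound each remaining class by Lemma~\ref{claim:Exp_gVal}(ii), count the members of a class by $n^{\text{ver}(\bar A)}$, and then invoke Lemma~\ref{claim: VerPairBound} together with $|\Gamma_{ij}|\le(i+j+1)^{11}$ to land on $15=11+4$. One small tidy-up: there is no extra ``distribution'' multiplicity to absorb---once you fix a canonical labelling of the $\text{ver}(\bar B)$ pattern vertices, each $\bar A\sim\bar B$ is already recovered from an injection of those labels into $[n]$, so the class size is bounded by $n^{\text{ver}(\bar B)}$ outright, which is exactly what makes the prefactor come out as $(i+j+1)^{15}$ and not something larger.
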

\begin{proof}
From \eqref{defn:CrossMomentij} and \eqref{eqn:defn_gEulerCharacteristic}, it is easy to see that
\[
\xi_{ij}(h) = \sum_{\bar{A} \in \tbinom{i + 1}{2}^2 \times \tbinom{j + 1}{2}^2} \mathbb{E}[g(h; \bar{A})].
\]
Collecting terms based on their equivalence classes under $\sim$, it follows that
\[
\xi_{ij}(h) = \sum_{[\bar{B}] \in \Gamma_{ij}} \;  \sum_{\bar{A} \in \tbinom{i + 1}{2}^2 \times \tbinom{j + 1}{2}^2: \bar{A} \sim \bar{B}} \mathbb{E}[g(h; \bar{A})].
\]
Applying Lemma~\ref{claim:Exp_gVal}  gives
\[
\xi_{ij}(h) \leq \gamma \; (i + j + 1)^4 h^2 \sum_{[\bar{B}] \in \Gamma_{ij} \backslash \mathscr{S}_{ij}} \;  \sum_{\bar{A} \in \tbinom{i + 1}{2}^2 \times \tbinom{j + 1}{2}^2: \bar{A} \sim \bar{B}} p^{\text{\pair{A}}}.
\]
Now note from \eqref{eqn:defn_Ver} and \eqref{eqn:defn_Pair} that,  if $\bar{A} \sim \bar{B}$, then \ver{A} $= $ \ver{B} and \pair{A} $ = $ \pair{B}. Further, the cardinality of the set $\{\bar{A} \in \tbinom{i + 1}{2}^2 \times \tbinom{j + 1}{2}^2: \bar{A} \sim \bar{B}\}$ is  bounded above by $n^{\text{\ver{B}}}$. From these observations, it follows that
\[
\xi_{ij}(h) \leq \gamma \;  (i + j + 1)^4 h^2 \sum_{[\bar{B}] \in \Gamma_{ij} \backslash \mathscr{S}_{ij}} n^{\text{\ver{B}}} p^{\text{\pair{B}}}.
\]
Using \eqref{eqn:CardinalityGammaij} and Lemma~\ref{claim: VerPairBound}, both the desired statements are now easy to see.
\end{proof}

\begin{proof}[Proof of Lemma~\ref{lem:TightnessEulerCharacteristic}]
Since $\{G(n, p, t): t \geq 0\}$ and hence $\{\bar{\chi}_{n}(t) : t \geq 0\}$ are stationary, to prove the desired result, it suffices to show that there exists $K_{\chi} > 0$ such that
\begin{equation}
\label{eqn:desResultEulerTightness1}
\mathbb{E}\left[\bar{\chi}_{n}(2h) - \bar{\chi}_{n}(h)\right]^2 \left[\bar{\chi}_{n}(h) - \bar{\chi}_{n}(0)\right]^2 \leq K_{\chi}h^2
\end{equation}
for $0 \leq h \leq 1$ and $n \geq 1$. From Lemmas~\ref{lem:VarianceCliqueCount} and \ref{lem:AsymptoticVarianceEulerCharacteristicCliqueCountSame}, ${\rm{Var}}[\chi_{n}(t)] = \Theta(n^{2k} p^{2\tbinom{k + 1}{2} - 1})$. Hence, to prove \eqref{eqn:desResultEulerTightness1}, it suffices to show that there exists $K_{\chi} > 0$ such that
\[
\Omega_{n, k}(h) := \frac{\mathbb{E}\left[\chi_{n}(2h) - \chi_{n}(h)\right]^2 \left[\chi_{n}(h) - \chi_{n}(0)\right]^2}{n^{4k} p^{4\tbinom{k + 1}{2} - 2}} \leq K_{\chi}h^2.
\]

Using \eqref{eqn:defnEulerCharacteristicCliqueCount} and the triangle inequality, we have
\[
\sqrt{\Omega_{n, k}(h)} \leq \sum_{0 \leq i,j \leq n - 1} \sqrt{\frac{\xi_{i, j}(h)}{n^{4k} p^{4\tbinom{k + 1}{2} - 2}}},
\]
where $\xi_{ij}(h)$ is as in \eqref{defn:CrossMomentij}. Collecting terms based on the sum $(i + j)$, we have
\[
\sqrt{\Omega_{n, k}(h)} \leq \sum_{0 \leq \ell \leq 2(n - 1)} \sum_{(i + j) = \ell}  \sqrt{\frac{\xi_{i, j}(h)}{n^{4k} p^{4\tbinom{k + 1}{2} - 2}}}.
\]
This implies that
\[
\sqrt{\Omega_{n, k}(h)} \leq \sum_{0 \leq \ell < \infty} \sum_{(i + j) = \ell}  \sqrt{\frac{\xi_{i, j}(h)}{n^{4k} p^{4\tbinom{k + 1}{2} - 2}}}.
\]
From this it follows that $\sqrt{\Omega_{n, k}(h)} \leq \text{Term}_1 + \text{Term}_2,$
where
\[
\text{Term}_1 := \sum_{0 \leq \ell < 16k + 15} \; \sum_{(i + j) = \ell}  \sqrt{\frac{\xi_{i, j}(h)}{n^{4k} p^{4\tbinom{k + 1}{2} - 2}}}
\]
and
\[
\text{Term}_2 := \sum_{16k + 15 \leq \ell < \infty} \; \sum_{(i + j) = \ell}  \sqrt{\frac{\xi_{i, j}(h)}{n^{4k} p^{4\tbinom{k + 1}{2} - 2}}}.
\]
As $\{(i, j): i,j \geq 0, i + j = \ell\}$ has $\ell + 1$ elements, using  (i) of Lemma~\ref{claim:UpperBoundXiij}, we have $\text{Term}_1 \leq \sqrt{\gamma \; h^2} K_1,$ where $K_1 := \sum_{0 \leq \ell < 16k + 15} (l  + 1)^{15/2  + 1}.$
Note that $K_1$ is a constant independent of $n$ and $0 \leq h \leq 1$. Similarly, using (ii)   of Lemma~\ref{claim:UpperBoundXiij}, we obtain $\text{Term}_2 \leq  \sqrt{\gamma \; h^2} K_2(n),$ where
\[
K_2(n) := \sum_{16k + 15 \leq \ell < \infty} \frac{(\ell + 1)^9}{n^{k  + (\ell - 16k - 15)}}.
\]
Clearly $K_2(n)$ is finite for each $n \geq 2$ and is monotonically decreasing. Consequently, if we let $ K_\chi := \gamma (K_1 +  K_2(2))^2$, then the desired result follows.\end{proof}

\begin{theorem}
\label{thm:TightnessBettiNumber}
Fix $k \geq 1$. Let $p = n^\alpha$, $\alpha \in \left(-\tfrac{1}{k}, -\tfrac{1}{k + 1}\right)$. Then, for the sequence $\{\text{\BetaBar} : n \geq 1\}$, condition \ref{cond: Tightness} holds with $\Upsilon_1 = \Upsilon_2 = 2$, i.e., for any $T > 0$, there exists $K_{\beta} > 0$ such that, for all $n \geq 1$, $0 \leq t \leq T + 1,$ and $0 \leq h \leq t$
\[
\mathbb{E}\left[\bar{\beta}_{n, k}(t + h) - \bar{\beta}_{n, k}(t)\right]^2 \left[\bar{\beta}_{n, k}(t) - \bar{\beta}_{n, k}(t - h)\right]^2 \leq K_\beta h^2.
\]
\end{theorem}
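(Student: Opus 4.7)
The plan is to reduce tightness of $\bar{\beta}_{n,k}$ to that of $\bar{\chi}_n$ established in Lemma~\ref{lem:TightnessEulerCharacteristic}, by exploiting Theorem~\ref{thm:VanishingOfAllButOneHomology}, which implies $\beta_{n,k}(s) = (-1)^k\chi_n(s)$ with overwhelming probability. By stationarity of $\{G(n,p,t):t\geq 0\}$, it suffices to prove
\[
\mathbb{E}[(\bar{\beta}_{n,k}(2h)-\bar{\beta}_{n,k}(h))^2(\bar{\beta}_{n,k}(h)-\bar{\beta}_{n,k}(0))^2]\leq K_\beta h^2
\]
for $0\leq h\leq 1$. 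Introduce the event $A_n := \{\beta_{n,j}(s) = 0 \text{ for all } j\neq k \text{ and all } s\in\{0,h,2h\}\}$. Three applications of Theorem~\ref{thm:VanishingOfAllButOneHomology} together with a union bound give $\mathbb{P}(A_n^c) = o(n^{-M})$ for every $M>0$. On $A_n$, the identity \eqref{eqn:defnEulerCharacteristicCliqueCount} forces $\beta_{n,k}(s) = (-1)^k\chi_n(s)$ at each of the three time points, so every increment of $\beta_{n,k}$ over these times matches $(-1)^k$ times the corresponding increment of $\chi_n$; hence on $A_n$, $\Delta\bar{\beta}_{n,k} = (-1)^k\sqrt{{\rm{Var}}[\chi_n]/{\rm{Var}}[\beta_{n,k}]}\,\Delta\bar{\chi}_n$.

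I would then split the expectation across $A_n$ and $A_n^c$. The on-event contribution is at most $({\rm{Var}}[\chi_n]/{\rm{Var}}[\beta_{n,k}])^2\,\mathbb{E}[(\bar{\chi}_n(2h)-\bar{\chi}_n(h))^2(\bar{\chi}_n(h)-\bar{\chi}_n(0))^2]$, which by Lemma~\ref{lem:AsymptoticVarianceBettiNumberEulerCharacteristicSame} (yielding boundedness of the variance ratio) and Lemma~\ref{lem:TightnessEulerCharacteristic} is at most $C_1 h^2$ with $C_1$ independent of $n$. For the off-event contribution, apply H\"older's inequality with exponent $r>1$:
\[
\mathbb{E}\bigl[(\Delta_+\bar{\beta}_{n,k})^2(\Delta_-\bar{\beta}_{n,k})^2\,1_{A_n^c}\bigr] \leq \mathbb{E}\bigl[(\Delta_+\bar{\beta}_{n,k})^{2r}(\Delta_-\bar{\beta}_{n,k})^{2r}\bigr]^{1/r}\,\mathbb{P}(A_n^c)^{1-1/r}.
\]
The crude bound $|\beta_{n,k}(s)|\leq f_{n,k}(s)\leq\binom{n}{k+1}$, together with the order ${\rm{Var}}[\beta_{n,k}]=\Theta(n^{2k}p^{2\binom{k+1}{2}-1})$ obtained from Lemmas~\ref{lem:VarianceCliqueCount} and \ref{lem:AsymptoticVarianceBettiNumberEulerCharacteristicSame}, makes the moment factor at most polynomial in $n$. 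To extract $h^2$ scaling I further note that $\Delta_+\bar{\beta}_{n,k}$ vanishes unless at least one edge flips during $[h,2h]$, and $\Delta_-\bar{\beta}_{n,k}$ vanishes unless at least one edge flips during $[0,h]$. Since edges evolve as independent continuous time Markov chains and the intervals are disjoint, these two events are independent and each has probability at most $\binom{n}{2}\lambda h\leq C_2 n^2 h$, so their joint probability is at most $C_2^2 n^4 h^2$. Incorporating this into the moment estimate and choosing $r$ and $M$ suitably, the super-polynomial smallness $\mathbb{P}(A_n^c)^{1-1/r}=o(n^{-M(1-1/r)})$ absorbs the polynomial-in-$n$ factors, while edge-flip independence supplies the factor $h^2$, giving $\leq C_3 h^2$ uniformly in $n$.

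The main obstacle lies in this off-event estimate: one must simultaneously exploit (i) the super-polynomial smallness of $\mathbb{P}(A_n^c)$ from Theorem~\ref{thm:VanishingOfAllButOneHomology} to kill the polynomial powers of $n$ that arise from crude uniform bounds on $|\Delta\bar{\beta}_{n,k}|$, and (ii) the independence of edge evolutions on the disjoint intervals $[0,h]$ and $[h,2h]$ to supply the $h^2$ factor. The two ingredients must be interleaved through a judicious choice of H\"older exponents so that the final bound has constant independent of $n$ and correct $h^2$ scaling; once this is achieved, Theorem~\ref{thm:TightnessBettiNumber} follows by adding the on-event and off-event contributions.
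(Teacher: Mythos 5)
Your overall architecture is the same as the paper's: reduce via stationarity to the triple $(0,h,2h)$, split on whether the Euler–Betti identity holds at these three times, control the on-event via Lemma~\ref{lem:TightnessEulerCharacteristic}, and for the off-event exploit both the super-polynomial smallness from Theorem~\ref{thm:VanishingOfAllButOneHomology} and the small probability of edge flips in short intervals. However, the mechanism you propose for the off-event — H\"older's inequality — cannot deliver what you need, and this is a genuine gap.

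Once you apply H\"older to peel off the indicator $1_{A_n^c}$ with conjugate exponents $r,r'$, the remaining factor (from which you want to extract the $h^2$ scaling) gets raised to the power $1/r < 1$. Concretely, if $\mathbb{E}[(\Delta_+\bar\beta)^{2r}(\Delta_-\bar\beta)^{2r}]\leq C(n)\,h^2$ via the edge-flip events, then $\mathbb{E}[(\Delta_+\bar\beta)^{2r}(\Delta_-\bar\beta)^{2r}]^{1/r}\leq C(n)^{1/r}\,h^{2/r}$, which yields $h^{2/r}$, not $h^2$. To recover $h^2$ you would need $r=1$, but then the exponent on $\mathbb{P}(A_n^c)$ is $1-1/r=0$ and you gain nothing from the super-polynomial smallness. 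The same problem persists if you instead bundle the edge-flip indicators $1_{B_+}1_{B_-}$ into the $L^{r'}$ factor: $\mathbb{E}[1_{A_n^c}1_{B_+}1_{B_-}]^{1/r'}\lesssim (n^4 h^2\cdot o(n^{-M}))^{1/r'}$ gives $h^{2/r'}$, not $h^2$. There is no admissible choice of H\"older exponents that produces both the $h^2$ factor and the polynomial-killing factor $n^{-M'}$ with a nontrivial $M'$.

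What actually makes the paper's proof work is an \emph{exact factorization} coming from independence, not an inequality. The paper realizes the dynamic \ER graph via independent Poisson arrival streams $\{T^e_i\}$ and i.i.d.\ Bernoulli marks $\{I^e_i\}$. The arrival counts $S_{0,h}, S_{h,2h}$ are functions of the $T$'s alone; the graph at any fixed time $t$, and hence $1_{\{(-1)^k\chi_n(t)\neq\beta_{n,k}(t)\}}$, is a function of the marks $\{I^e_{i_e(t)}\}$ alone, and these two are independent. Therefore
\[
\mathbb{E}\bigl[S^2_{0,h}\, S^2_{h,2h}\, 1_{\{(-1)^k\chi_n(t)\neq\beta_{n,k}(t)\}}\bigr] \;=\; \mathbb{E}[S^2_{0,h}]\,\mathbb{E}[S^2_{h,2h}]\,\Pr\{(-1)^k\chi_n(t)\neq\beta_{n,k}(t)\},
\]
and each Poisson second moment is $O(n^4 h)$ (so the product gives $h^2$), while the probability is $o(n^{-M})$ for every $M$, which kills all polynomial-in-$n$ prefactors including the crude bound on $|\beta_{n,k}|$. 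Note also that the paper does not bound $\beta$ merely by $\binom{n}{k+1}$: it uses Lemma~2.2 of \cite{yogeshwaran2014random} to bound $|\Delta\beta_{n,k}|$ by the number of edge flips times a binomial, which is what lets the $S^2$'s appear. Your insight that $B_+$, $B_-$, and the homology failure event at a fixed time are mutually independent (since $B_\pm$ depend on arrivals and the homology event depends on marks) is correct and would patch the argument — but you must use that independence to \emph{factorize} the expectation, rather than applying H\"older to separate the indicator.
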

\begin{proof}
From Lemmas~ \ref{lem:AsymptoticVarianceEulerCharacteristicCliqueCountSame}, \ref{lem:AsymptoticVarianceBettiNumberEulerCharacteristicSame}, and \ref{lem:VarianceCliqueCount}, we have ${\rm{Var}}[\beta_{n, k}(t)] = \Theta(n^{2k}p^{2\tbinom{k + 1}{2} - 1}).$ Hence, as discussed in Lemma~\ref{lem:TightnessEulerCharacteristic}, to prove the desired result it suffices to show that there exists $K_\beta > 0$ such that
\[
\Omega_{n, k}(h) := \frac{\mathbb{E}\left[\beta_{n, k}(2h) - \beta_{n, k}(h)\right]^2 \left[\beta_{n, k}(h) - \beta_{n, k}(0)\right]^2}{n^{4k}p^{4\tbinom{k + 1}{2} - 2}} \leq K_\beta h^2
\]
for all $n \geq 1$ and $0 \leq h \leq 1$.

Now fix an arbitrary $h \in [0,1]$ and consider the event
\begin{multline}
\label{eqn:EulerEqualBettiEvent}
E = \{(-1)^k\chi_{n}(0) = \beta_{n, k}(0)\} \cap \{(-1)^k\chi_{n}(h) = \beta_{n, k}(h)\} \\ \cap \{(-1)^k\chi_{n}(2h) = \beta_{n, k}(2h)\}.
\end{multline}
Then, observe that
\begin{equation}
\label{eqn:BettiSplit}
\mathbb{E}[\beta_{n, k}(2h) - \beta_{n, k}(h)]^2[\beta_{n, k}(h) - \beta_{n, k}(0)]^2 = \text{Term}_1 + \text{Term}_2,
\end{equation}
where
\begin{equation}
\label{eqn:Term1}
\text{Term}_1 =  \mathbb{E}[\beta_{n, k}(2h) - \beta_{n, k}(h)]^2[\beta_{n, k}(h) - \beta_{n, k}(0)]^2 1_{E}
\end{equation}
and
\begin{equation}
\label{eqn:Term2}
\text{Term}_2 =  \mathbb{E}[\beta_{n, k}(2h) - \beta_{n, k}(h)]^2[\beta_{n, k}(h) - \beta_{n, k}(0)]^2 1_{E^c}.
\end{equation}
Clearly,
\begin{equation}
\label{eqn:Term1Exp1}
\text{Term}_1 = \mathbb{E}[\chi_{n}(2h) - \chi_{n}(h)]^2[\chi_{n}(h) - \chi_{n}(0)]^2 1_{E}
\end{equation}
and hence, using Lemma~\ref{lem:TightnessEulerCharacteristic}, it follows that
\begin{equation}
\label{eqn:Term1Exp3}
\frac{\text{Term}_1}{n^{4k} p^{4\tbinom{k + 1}{2} - 2}}\leq K_\chi h^2.
\end{equation}

To obtain a bound on Term$_2$, we consider an alternate but equivalent description of the dynamic \ER graph. Specifically, to each edge $e$, independently associate two independent sequences $T^e := \{ T^e_i \}_{i \geq 1}$ and $I^e:= \{I^e_i\}_{i \geq 0}$, where the  $T^e$ are arrival times of a Poisson process with parameter $\lambda$ and
the  $I^e$ are i.i.d.\ Bernoulli random variables which take the `on' state with probability $p$ and `off' state with probability $1 - p$. Let $T^e_0 = 0$. If we define the state of the edge $e$ at time $t$ as
\[
e(t) := \sum_{i \geq 0} 1_{\{T^e_i \leq t < T_{i+1}^e\}}I_i^e,
\]
then it follows that the behaviour of edge $e$ is that of an edge in the dynamic \ER graph. Firstly,  the initial configuration ${e}(0) = I_0^{e}$ a.s. and so $\Pr\{e(0) = \text{on}\} = p,$ as required. Fix $t_1 < t_2$. Let $\# _T$ be the cardinality of $\{i : T_i^e \in (t_1, t_2]\}$ and, if $\#_T > 0$, let $i_{\text{last}} := \arg \max\{i : T_i^e \in (t_1, t_2]\}$. Then
\begin{eqnarray*}
\Pr\{e(t_2) = \text{on} | e(t_1) = \text{on}\} & = & \Pr\{\#_T = 0 \} + \sum_{\ell > 0} \Pr\{\#_T = \ell, T_{i_{\text{last}}}^e = \text{on} \}\\
& = & e^{-\lambda(t_2 - t_1)} + \sum_{\ell > 0} e^{-\lambda (t_2 - t_1)} \frac{[\lambda (t_2 - t_1)]^\ell}{\ell !} p,
\end{eqnarray*}
where the last equality follows due to independence of $T^e$ and $I^e$. From this, it is easy to see that \eqref{eqn:EdgeTransition11} holds. Similarly one can check that \eqref{eqn:EdgeTransition00} also holds. This verifies the equivalence of the two descriptions of the dynamic \ER graph.

Let $S_{0, h} := \sum_{e} \sum_{i \geq 1} 1_{\{T^{e}_i \leq h\}}$ denote the sum of arrivals that occurred across each edge in time $(0,h]$. Let $\tau_1, \tau_2, \ldots$, with $\tau_i \leq \tau_{i + 1}$, denote the sequence of arrival times in $(0, h]$ at which these $S_{0,h}$ arrivals occurred. Note that $\tau_i$ and $\tau_{i + 1}$ could correspond to arrivals along different edges. Separately, let $\tau_0 = 0$. Let $\mathcal{P}_0$ denote the event that no arrival occurs at time $0$, i.e., for all $i \geq 1$, $\tau_i > 0$. Then,
\[
|\beta_{n, k}(h) - \beta_{n, k}(0)|1_{\mathcal{P}_0} \leq \sum_{i = 1}^{S_{0, h}}|\beta_{n, k}(\tau_i) - \beta_{n, k}(\tau_{i - 1})|1_{\mathcal{P}_0}.
\]
Using Lemma 2.2 from \cite{yogeshwaran2014random}, it then follows that
\begin{multline*}
|\beta_{n, k}(h) - \beta_{n, k}(0)|1_{\mathcal{P}_0} \leq \sum_{i = 1}^{S_{0,h}}|f_{n, k}(\tau_i) - f_{n, k}(\tau_{i - 1})|1_{\mathcal{P}_0} \\ + \sum_{i = 1}^{S_{0,h}}|f_{n, k + 1}(\tau_i) - f_{n, k + 1}(\tau_{i - 1})|1_{\mathcal{P}_0}.
\end{multline*}
However,  $|f_{n, k}(\tau_i) - f_{n, k}(\tau_{i - 1})| \leq \tbinom{n}{k + 1}$ and $|f_{n, k + 1}(\tau_i) - f_{n, k}(\tau_{i - 1})| \leq \tbinom{n}{k + 2}$. Hence,
\[
|\beta_{n, k}(h) - \beta_{n, k}(0)|1_{\mathcal{P}_0} \leq \left[\tbinom{n}{k + 1} + \tbinom{n}{k + 2}\right]S_{0,h} 1_{\mathcal{P}_0} \leq 2n^{k + 2} S_{0,h}.
\]
Similarly, if we let $S_{h, 2h}$ denote the total number of arrivals across edges in $(h,2h]$, then
\[
|\beta_{n, k}(2h) - \beta_{n, k}(h)|1_{\mathcal{P}_h} \leq 2n^{k + 2} S_{h,2h},
\]
where $\mathcal{P}_h$ denotes the event that no arrivals happened at time $h$. Since $1_{\mathcal{P}_0}$ and $1_{\mathcal{P}_h}$ are almost sure events, the above inequalities combined with \eqref{eqn:Term2} show that
\[
\text{Term}_2 \leq 16n^{4k + 8} \mathbb{E}[S^2_{0, h} S^2_{h, 2h} 1_{E^c}].
\]
Now using \eqref{eqn:EulerEqualBettiEvent}, note that
\[
1_{E^c} \leq 1_{\{(-1)^k\chi_{n}(0) \neq \beta_{n, k}(0)\}} + 1_{\{(-1)^k\chi_{n}(h) \neq \beta_{n, k}(h)\}} + 1_{\{(-1)^k\chi_{n}(2h) \neq \beta_{n, k}(2h)\}}.
\]
Consequently, we have
\begin{multline}
\label{eqn:Term2Bound}
\text{Term}_2 \leq 16n^{4k + 8} \bigg\{ \mathbb{E}\left[S^2_{0, h} S^2_{h, 2h} 1_{\{(-1)^k\chi_{n}(0) \neq \beta_{n, k}(0)\}}\right] \\ + \; \mathbb{E}\left[S^2_{0, h} S^2_{h, 2h}  1_{\{(-1)^k\chi_{n}(h) \neq \beta_{n, k}(h)\}}\right]  +  \mathbb{E}\left[S^2_{0, h} S^2_{h, 2h} 1_{\{(-1)^k\chi_{n}(2h) \neq \beta_{n, k}(2h)\}}\right]\bigg\}.
\end{multline}

However, for any $t \geq 0$, note that $1_{\{(-1)^k\chi_{n}(t) \neq \beta_{n, k}(t)\}}$ is a function of only $G(n, p, t)$ which in turn is a function of only $\{I_{i_{e}(t)}^e\}$, where
\[
i_{e}(t) := \min\{i: T_i^e \leq t < T_{i + 1}^e\}.
\]
Since for each $e$, the i.i.d.\ sequence $\{I^{e}_i\}$ and the sequence $\{T^e_i\}$ are independent, it is not difficult to see that  $\bigcup_e\{I_{i_{e}(t)}^e\}$ is independent of $\bigcup_e\{T^e_i\}$. So, $S_{0,h}, S_{h, 2h}$,(both of which depend only upon $\cup_e\{T^e_i\}$) and $1_{\{(-1)^k\chi_{n}(t) \neq \beta_{n, k}(t)\}}$ (which depends only upon $\cup_e\{I_{i_{e}(t)}^e\}$) are mutually independent for any $t \geq 0$. Since $S^2_{0,h}$ and $S^2_{h, 2h}$ are Poisson with parameter $\tbinom{n}{2} \lambda h$,
\[
\mathbb{E}[S^2_{0, h}] = \mathbb{E}[S^2_{h, 2h}] = \tbinom{n}{2} \lambda h + \tbinom{n}{2}^2 \lambda^2 h^2 \leq 2n^4 \lambda^2 h,
\]
where the last inequality follows since $0 \leq h \leq 1$. Consequently, we have
\begin{multline*}
\text{Term}_2 \leq 64 n^{4k + 16} \lambda^2 h^2 \bigg\{\Pr\{(-1)^k\chi_{n}(0) \neq \beta_{n, k}(0)\} \\  + \Pr\{(-1)^k\chi_{n}(h) \neq \beta_{n, k}(h)\} + \; \Pr\{(-1)^k\chi_{n}(2h) \neq \beta_{n, k}(2h)\}   \bigg\}.
\end{multline*}
However, from Theorem \ref{thm:VanishingOfAllButOneHomology},
\[
\Pr\{(-1)^k\chi_{n}(t) \neq \beta_{n}(t) \} = o(n^{-M}),
\]
for any  $M > 0$. Using this, it is not difficult to see that there exists $K^\prime_{\beta} > 0$ such that
\begin{equation}
\label{eqn:Term2UpperBound}
\frac{\text{Term}_2}{n^{4k} p^{4 \tbinom{k + 1}{2}  - 1}} \leq K_\beta^\prime h^2.
\end{equation}
Combining \eqref{eqn:BettiSplit}, \eqref{eqn:Term1Exp3}, and \eqref{eqn:Term2UpperBound}, the desired result follows. \end{proof}

Putting Lemma~\ref{lem:InitialDriftBettiNumbers} and Theorem~\ref{thm:TightnessBettiNumber} together shows that the sequence of processes $\{ \text{\BetaBar} : n \geq 1\}$ is tight. Combining this with Theorem~\ref{thm:AsymptoticMultivariateGaussianBettiNumber} completes the proof for Theorem~\ref{thm:ConvergenceBettiNumberOrnsteinUhlenbeckProcess} as desired. Note that along the way we have also proved that if $p = n^\alpha$ with $\alpha \in (-1/k, -1/(k + 1))$, then the sequences of processes $\{\text{\CliqueBar}: n \geq 1\}$ and $\{\text{\EulerBar}: n \geq 1\}$ converge in distribution to the stationary Ornstein-Uhlenbeck process.

\appendix

\section{The processes $\bar{\beta}_{n,k}$ are not Markovian}
\label{app:markov}

Although the dynamic \ER graph $\{G(n, p, t) : t \geq 0\}$ is a continuous time Markov chain, and the processes are
 $\{\beta_{n, k}(t) : t \geq 0\}$ are pointwise functions of them, they themselves are not Markovian. To prove this we need the following result from \cite[Theorem 4]{burke1958markovian}.

\begin{theorem}
\label{thm:MarkovianFunction}
Let $\{X(t) : t\geq 0\}$ be a Markov chain on the state space $M=\{1,\dots,m\}$, with arbitrary initial distribution,   and stationary transition probability function $P(t) = (p_{ij}(t))$, continuous in $t$. Assume that $\lim_{t \rightarrow 0} P(t) = \mathbb{I}$.  Let $\psi$ be a function $M$ on  let $Y(t) = \psi(X(t))$. If the states if $Y$ are $y_1,\dots,y_r$, $r\leq m$, define $r$ disjoint subsets of $M$ by   $S_j = \{i\in M:\psi(i)=y_j\}$.  Then $Y$ is Markovian if, and only if, for each $j = 1, \ldots, r$, either one of the following conditions holds.
\begin{enumerate}
\item[(i)] $p_{i, S_{j}}(t) \equiv 0$ for all $i \notin S_j$.
\item[(ii)] $p_{i, S_{j}}(t) = C_{S_{j^\prime}, S_{j}}(t)$ for every $i \in S_{j^\prime}$ for $j^\prime = 1,\ldots, r$, where $C_{S_{j^\prime}, S_{j}}(t)$ is a constant that depends only on $S_{j^\prime}, S_{j}$, and $t$.
\end{enumerate}
(Note that (ii)$\implies$(i), and so (i) is irrelevant for the `only if' part of the theorem.)
\end{theorem}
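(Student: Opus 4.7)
The plan is to work directly with the conditional probabilities of $Y$ in terms of the transition function of $X$, using the aggregate quantity $p_{i, S}(t) := \sum_{k \in S} p_{ik}(t)$. The central identity, valid by the tower property and the Markov property of $X$, is
\[
\mathbb{P}(Y(t_n) = y_{j_n} \mid \mathcal{F}^Y_{t_{n-1}}) \;=\; \mathbb{E}\!\left[ p_{X(t_{n-1}),\, S_{j_n}}(t_n - t_{n-1}) \,\big|\, \mathcal{F}^Y_{t_{n-1}} \right],
\]
where $\mathcal{F}^Y_s$ is the $\sigma$-field generated by $Y$ up to time $s$. The whole argument reduces to asking when the right-hand side depends on the past only through the value $Y(t_{n-1})$.

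For the `if' direction, assume for each $j$ one of (i), (ii) holds. When (ii) is the active condition for $j_n$, on the event $\{Y(t_{n-1}) = y_{j_{n-1}}\}$ we have $X(t_{n-1}) \in S_{j_{n-1}}$ almost surely, so $p_{X(t_{n-1}), S_{j_n}}(t_n - t_{n-1})$ reduces to the deterministic constant $C_{S_{j_{n-1}}, S_{j_n}}(t_n - t_{n-1})$; the conditional expectation is then this constant, depending only on $y_{j_{n-1}}$ and the time increment, so the Markov property holds. When (i) is the active condition for $j_n$ and $j_n \neq j_{n-1}$, the conditional expectation is identically zero; the case $j_n = j_{n-1}$ is recovered by complementation, using $\sum_{j'} p_{i, S_{j'}}(t) = 1$ together with the disjunctive conditions on the other classes.

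For the `only if' direction, I would exploit the fact that $Y$ must be Markov under every initial distribution of $X$. Pick any $i, i' \in S_{j'}$ and consider $\mu = \alpha \delta_i + (1-\alpha)\delta_{i'}$. Using the Markov property of $Y$ at three times $t_0 < t_1 < t_2$ (with $Y(t_0) = y_{j_0}$ and $Y(t_1) = y_{j'}$), one expands both $\mathbb{P}_\mu(Y(t_2) = y_j \mid Y(t_1) = y_{j'}, Y(t_0) = y_{j_0})$ and $\mathbb{P}_\mu(Y(t_2) = y_j \mid Y(t_1) = y_{j'})$ as ratios of bilinear forms in $\mu$. The observation is that a weighted average of the quantities $p_{i, S_j}(t_2 - t_1)$ cannot be independent of the weights unless the quantities being averaged are themselves constant over $i \in S_{j'}$, so equality for every $\mu$, $\alpha$ and $j_0$ forces $p_{i, S_j}(t_2 - t_1) = p_{i', S_j}(t_2 - t_1)$. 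This is condition (ii).

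The main obstacle will be the `only if' direction, where one must argue rigorously that the requirement of Markovianity across all initial distributions collapses the conditional probability to depend on the initial state only through its class, and must address edge cases where the conditioning events have zero probability under some distributions. The parenthetical note that (ii) $\Rightarrow$ (i) signals that the necessity step produces condition (ii) directly; (i) is present only to broaden the sufficient criterion so that chains with absorbing or unreachable classes are covered without invoking (ii) for those classes.
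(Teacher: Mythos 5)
This theorem is quoted verbatim from Burke and Rosenblatt's 1958 paper (Theorem~4 of \cite{burke1958markovian}); the present paper supplies no proof of it, so there is no in-paper argument against which to benchmark your attempt, and it must be judged on its own.

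Your sufficiency direction is essentially correct and complete. The reduction
\[
\Pr\bigl(Y(t_n)=y_{j_n}\mid\mathcal{F}^Y_{t_{n-1}}\bigr)
= \mathbb{E}\bigl[\,p_{X(t_{n-1}),\,S_{j_n}}(t_n-t_{n-1})\mid\mathcal{F}^Y_{t_{n-1}}\bigr]
\]
is the right starting point; under (ii) the integrand is a.s.\ constant on $\{Y(t_{n-1})=y_{j_{n-1}}\}$, so the conditional expectation depends on the past only through $(j_{n-1},j_n)$ and the time increment. Under (i) with $j_n\neq j_{n-1}$ the integrand vanishes, and the diagonal case $j_n=j_{n-1}$ follows by complementation since (i) or (ii) is assumed for every class, exactly as you outline.

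The necessity direction identifies the correct lever---varying the initial distribution---but the sketch as written does not yet close. Two gaps: (a) with $\mu=\alpha\delta_i+(1-\alpha)\delta_{i'}$ and both $i,i'\in S_{j'}$, the variable $Y(0)$ is deterministic, so conditioning on $Y(0)$ adds nothing; you must take an intermediate observation time $t_0>0$ (or seed $\mu$ in several classes) for the Markov constraint to bite. (b) The claim that ``a weighted average of the $p_{i,S_j}$ cannot be independent of the weights unless the averaged quantities are constant'' is only valid once you show that the achievable conditional laws of $X(t_1)$ on $S_{j'}$, as $\mu$, $\alpha$, $j_0$, $t_0$, $t_1$ range, span (or have closure spanning) the whole simplex on $S_{j'}$. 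That is exactly where the hypothesis $\lim_{t\to0}P(t)=\mathbb{I}$ is needed: taking $\mu=\delta_i$ and $t_1\to0$ pushes the conditional law of $X(t_1)$ given $Y(t_1)=y_{j'}$ toward $\delta_i$, supplying the needed extreme points, but this requires a limiting argument and care with conditioning events whose probability vanishes. Your closing paragraph acknowledges both points, which is honest, but as it stands the proof of necessity is an outline rather than an argument. Finally, note that the parenthetical ``(ii)$\implies$(i)'' in the statement cannot be read literally---condition (ii) does not force the aggregate probabilities to vanish; your reinterpretation (that necessity yields (ii) for reachable classes, with (i) there only to cover unreachable ones in the sufficiency direction) is the sensible way to read it, and is consistent with the Burke--Rosenblatt source.
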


An example which shows that the process $\{\beta_{n, k}(t) : t \geq 0\}$  is not Markov for finite $n$ is the following. Consider the dynamic \ER graph with $n = 4$, arbitrary $p \in (0,1)$, and arbitrary $\lambda > 0$. At any given time $t$, each of its $6$ edges, say $e_1, \ldots, e_6$, can be either in `on' or `off' state. Thus, $G(4, p, t)$ has $m = 64$ possible configurations. However the process $\{\beta_{4, 1}(t) : t \geq 0\}$ can only take $r = 2$ values, i.e.\ zero or one. This can be inferred from Figure~\ref{fig:SPA_BettiNotMarkov}, which gives the different edge configurations when $\beta_{4, 1}(t) = 1$, and the fact that if more than one of these configurations occur simultaneously then the resulting complex will have $\beta_{4,1}(t) =0$. Hence, using \eqref{eqn:EdgeTransition11} and \eqref{eqn:EdgeTransition00}, we have
\beqq
\Pr\{\beta_{4, 1}(t + s) = 1 | e_1(s) = \cdots = e_6(s) &=& \text{off}\} = 3 p^4 (1 - e^{-\lambda t})^4 ((1- p) + pe^{-\lambda t})^2,
\eeqq
while
\beqq
\Pr\{\beta_{4, 1}(t + s) = 1 | e_1(s) = \cdots = e_6(s) = \text{on}\} &=& 3 (p + (1 - p)e^{-\lambda t})^4 (1 - p)^2(1- e^{-\lambda t})^2.
\eeqq
Clearly, for a generic $p$ and $t$, the above two equations are unequal. On the other hand, $\beta_{4, 1}(s) = 0$ when either $e_1(s) = \cdots = e_6(s) = \text{off}$, or $e_1(s) = \cdots = e_6(s) = \text{on}$. These facts along with Theorem~\ref{thm:MarkovianFunction} show that the process $\{\beta_{4,1}(t) : t\geq 0\}$ is not Markovian.

\begin{figure}
\centering
\includegraphics[width = 0.6\textwidth,height=0.1\textwidth]{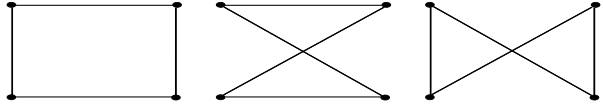}\\
\caption{ Configurations of $G(4, p, t)$ with $\beta_{4, 1}(t) = 1$. (No vertices at  intersections.)}
\label{fig:SPA_BettiNotMarkov}
\end{figure}

\section{Exact expression for $\mathbb{E}[g(h; \bar{A})]$}
\label{sec:app}

Consider the notations defined below Lemma~\ref{lem:TightnessEulerCharacteristic}. Clearly,
\beq
\label{eqn:gExpansion}
g(h; \bar{A}) & = & 1_{A_1}(2h)1_{A_2}(2h)1_{A_3}(h)1_{A_4}(h) + 1_{A_1}(h)1_{A_2}(h)1_{A_3}(0)1_{A_4}(0) \\ \nonumber
&& +\;1_{A_1}(2h)1_{A_2}(2h)1_{A_3}(0)1_{A_4}(0) + 1_{A_1}(h)1_{A_2}(h)1_{A_3}(h)1_{A_4}(h) \\ \nonumber
&& +\;1_{A_1}(2h)1_{A_2}(h)1_{A_3}(h)1_{A_4}(0) + 1_{A_1}(h)1_{A_2}(2h)1_{A_3}(h)1_{A_4}(0) \\ \nonumber
 && +\;1_{A_1}(2h)1_{A_2}(h)1_{A_3}(0)1_{A_4}(h) + 1_{A_1}(h)1_{A_2}(2h)1_{A_3}(0)1_{A_4}(h) \\ \nonumber
 && -\;1_{A_1}(2h)1_{A_2}(2h)1_{A_3}(h)1_{A_4}(0) - 1_{A_1}(2h)1_{A_2}(2h)1_{A_3}(0)1_{A_4}(h) \\ \nonumber
 && -\; 1_{A_1}(h)1_{A_2}(h)1_{A_3}(0)1_{A_4}(h) - 1_{A_1}(h)1_{A_2}(h)1_{A_3}(h)1_{A_4}(0) \\ \nonumber
 && -\;1_{A_1}(2h)1_{A_2}(h)1_{A_3}(h)1_{A_4}(h) - 1_{A_1}(h)1_{A_2}(2h)1_{A_3}(h)1_{A_4}(h) \\ \nonumber
 && -\;1_{A_1}(2h)1_{A_2}(h)1_{A_3}(0)1_{A_4}(0) - 1_{A_1}(h)1_{A_2}(2h)1_{A_3}(0)1_{A_4}(0).
\eeq
Using \eqref{eqn:EdgeTransition11} and \eqref{eqn:EdgeProbability}, it is not difficult to see that if $\tau(h) := p + (1 - p) e^{-\lambda h}$, then $\mathbb{E}[g(h; \bar{A})] = p^{\text{pair}(\bar{A})} \Phi(h; \bar{A})$, where
\beq
\label{eqn:PhiVal}
&& \Phi(h; \bar{A}) =  \left[\tau(h)\right]^{\binom{a_{13}}{2} + \binom{a_{14}}{2} + \binom{a_{23}}{2} + \binom{a_{24}}{2} - \binom{a_{123}}{2} - \binom{a_{124}}{2} - \binom{a_{134}}{2} - \binom{a_{234}}{2} + \binom{a_{1234}}{2}}\\  \nonumber
&&+ \left[\tau(h)\right]^{\binom{a_{13}}{2} + \binom{a_{14}}{2} + \binom{a_{23}}{2} + \binom{a_{24}}{2} - \binom{a_{123}}{2} - \binom{a_{124}}{2} - \binom{a_{134}}{2} - \binom{a_{234}}{2} + \binom{a_{1234}}{2}}\\ \nonumber
&&+ \left[\tau(2h)\right]^{\binom{a_{13}}{2} + \binom{a_{14}}{2} + \binom{a_{23}}{2} + \binom{a_{24}}{2} - \binom{a_{123}}{2} - \binom{a_{124}}{2} - \binom{a_{134}}{2} - \binom{a_{234}}{2} + \binom{a_{1234}}{2}} + 1 \\ \nonumber
&&+\;\left[\tau(h)\right]^{\binom{a_{12}}{2} + \binom{a_{13}}{2} +  \binom{a_{24}}{2} + \binom{a_{34}}{2}- \binom{a_{234}}{2} - \binom{a_{123}}{2}}\left[\tau(2h)\right]^{\binom{a_{14}}{2} - \binom{a_{124}}{2} - \binom{a_{134}}{2} + \binom{a_{1234}}{2}} \\ \nonumber
&&+\;\left[\tau(h)\right]^{\binom{a_{12}}{2} + \binom{a_{23}}{2} +  \binom{a_{14}}{2} + \binom{a_{34}}{2}- \binom{a_{134}}{2} - \binom{a_{123}}{2}}\left[\tau(2h)\right]^{\binom{a_{24}}{2} - \binom{a_{124}}{2} - \binom{a_{234}}{2} + \binom{a_{1234}}{2}} \\ \nonumber
&&+\;\left[\tau(h)\right]^{\binom{a_{12}}{2} + \binom{a_{14}}{2} +  \binom{a_{23}}{2} + \binom{a_{34}}{2}- \binom{a_{234}}{2} - \binom{a_{124}}{2}}\left[\tau(2h)\right]^{\binom{a_{13}}{2} - \binom{a_{123}}{2} - \binom{a_{134}}{2} + \binom{a_{1234}}{2}} \\ \nonumber
&&+\;\left[\tau(h)\right]^{\binom{a_{12}}{2} + \binom{a_{24}}{2} +  \binom{a_{13}}{2} + \binom{a_{34}}{2}- \binom{a_{134}}{2} - \binom{a_{124}}{2}}\left[\tau(2h)\right]^{\binom{a_{23}}{2} - \binom{a_{123}}{2} - \binom{a_{234}}{2} + \binom{a_{1234}}{2}} \\ \nonumber
&&-\;\left[\tau(h)\right]^{\binom{a_{13}}{2} + \binom{a_{23}}{2} + \binom{a_{34}}{2} - \binom{a_{123}}{2}} \left[\tau(2h)\right]^{\binom{a_{14}}{2} + \binom{a_{24}}{2}- \binom{a_{124}}{2} - \binom{a_{134}}{2} - \binom{a_{234}}{2} + \binom{a_{1234}}{2}} \\ \nonumber
&&-\;\left[\tau(h)\right]^{\binom{a_{14}}{2} + \binom{a_{24}}{2} + \binom{a_{34}}{2} - \binom{a_{124}}{2}} \left[\tau(2h)\right]^{\binom{a_{13}}{2} + \binom{a_{23}}{2}- \binom{a_{123}}{2} - \binom{a_{134}}{2} - \binom{a_{234}}{2} + \binom{a_{1234}}{2}} \\ \nonumber
&&-\;\left[\tau(h)\right]^{\binom{a_{14}}{2} + \binom{a_{24}}{2} + \binom{a_{34}}{2} - \binom{a_{124}}{2} - \binom{a_{134}}{2} - \binom{a_{234}}{2} + \binom{a_{1234}}{2}} \\ \nonumber
&&-\;\left[\tau(h)\right]^{\binom{a_{13}}{2} + \binom{a_{23}}{2} + \binom{a_{34}}{2} - \binom{a_{123}}{2} - \binom{a_{134}}{2} - \binom{a_{234}}{2} + \binom{a_{1234}}{2}} \\ \nonumber
&&-\;\left[\tau(h)\right]^{\binom{a_{12}}{2} + \binom{a_{13}}{2} + \binom{a_{14}}{2}- \binom{a_{123}}{2} - \binom{a_{124}}{2} - \binom{a_{134}}{2} + \binom{a_{1234}}{2}} \\ \nonumber
&&-\;\left[\tau(h)\right]^{\binom{a_{12}}{2} + \binom{a_{23}}{2} + \binom{a_{24}}{2}- \binom{a_{123}}{2} - \binom{a_{124}}{2} - \binom{a_{234}}{2} + \binom{a_{1234}}{2}} \\ \nonumber
&&-\;\left[\tau(h)\right]^{\binom{a_{12}}{2} + \binom{a_{23}}{2} + \binom{a_{24}}{2} - \binom{a_{234}}{2}}\left[\tau(2h)\right]^{\binom{a_{13}}{2} + \binom{a_{14}}{2} - \binom{a_{123}}{2} - \binom{a_{124}}{2} - \binom{a_{134}}{2} + \binom{a_{1234}}{2}} \\ \nonumber
&&-\;\left[\tau(h)\right]^{\binom{a_{12}}{2} + \binom{a_{13}}{2} + \binom{a_{14}}{2} - \binom{a_{134}}{2}}\left[\tau(2h)\right]^{\binom{a_{23}}{2} + \binom{a_{24}}{2} - \binom{a_{123}}{2} - \binom{a_{124}}{2} - \binom{a_{234}}{2} + \binom{a_{1234}}{2}}.
\eeq

\acks
The authors would like to  thank an anonymous Referee, Vivek Borkar, and Sandeep Juneja for  useful comments and suggestions. The research of GT was supported in part by an IBM PhD fellowship and TOPOSYS, FD7-ICT-318493 STREP. DY was supported by a DST-INSPIRE Faculty Award and  TOPOSYS. RJA was partially supported under URSAT, ERC Advanced Grant 320422.

\bibliographystyle{apt}
\bibliography{AAP_Bib}

\end{document}